\newtheorem{theorem}{Theorem}[section]
\newtheorem{lemma}[theorem]{Lemma}
\newtheorem{proposition}[theorem]{Proposition}
\newtheorem{corollary}[theorem]{Corollary}
\theoremstyle{definition}
\theoremstyle{remark}
\numberwithin{equation}{section}
\title[Integral Operators, Fock-Sobolev Spaces, Multipliers on Gauss-Sobolev Spaces]{Integral Operators on Fock-Sobolev Spaces via Multipliers on Gauss-Sobolev Spaces}
\author[B. D. Wick and S. Wu]{Brett D. Wick and Shengkun Wu}
\address{Brett D. Wick, Department of Mathematics and Statistics, Washington University in St. Louis, MO 63130, USA}
\email{ wick@math.wustl.edu}
\address{Shengkun Wu, College of Mathematics and Statistics, Chongqing University, Chongqing, 401331, PR China}
\email{shengkunwu@foxmail.com}
 \keywords{Fock-Sobolev spaces, Gauss-Sobolev spaces and multipliers}
 \thanks{Brett D. Wick's research is supported in part by a National Science Foundation DMS grants \# 1560955 and \# 1800057 and Australian Research Council -- DP 190100970. Shengkun Wu's research is supported by CSC201906050022.}
 \thanks{}
 \thanks{}
\date{}
\begin{document}

\begin{abstract} In this paper, we obtain an isometry between the Fock-Sobolev space and the Gauss-Sobolev space. As an application, we use multipliers on the Gauss-Sobolev space to characterize the boundedness of an integral operator on the Fock-Sobolev space.

\end{abstract} \maketitle

\section{Introduction }
Let $\mathbb{C}^{n}$ be the complex $n$ dimensional space and $d v$ be the ordinary volume measure on $\mathbb{C}^{n}$.
If $z=\left(z_{1}, \ldots, z_{n}\right)$ and $w=\left(w_{1}, \ldots, w_{n}\right)$ are points in $\mathbb{C}^{n},$ we write
$$
z \cdot \overline{w}=\sum_{j=1}^{n} z_{j} \overline{w}_{j}, \quad|z|=(z \cdot \overline{z})^{1 / 2}.
$$
Let Gaussian measure be
$$
d \lambda(z)=\pi^{-n} e^{-|z|^{2}} d v(z).
$$
Denote by $L^{2}(\mathbb{C}^{n},d\lambda)$ the set of square integrable functions with respect to $d\lambda$.
The Fock space $F^2:=F^{2}\left(\mathbb{C}^{n}\right)$ consists of all entire functions $f$ on the complex Euclidean space $\mathbb{C}^{n}$
such that
$$
\|f\|_{F^{2}}=\left(\int_{\mathbb{C}^{n}}|f(z)|^{2} d \lambda(z)\right)^{\frac{1}{2}}<\infty.
$$
$F^{2}$ is a closed subspace of the Hilbert space $L^{2}(\mathbb{C}^{n},d\lambda)$ with inner product
$$\langle f, g\rangle_{F^2}=\int_{\mathbb{C}^{n}} f(z) \overline{g(z)}  d \lambda(z).$$
The orthogonal projection $P : L^{2}(\mathbb{C}^{n},d\lambda) \to F^{2}$ is given by
$$
P f(z)=\frac{1}{\pi^{n}} \int_{\mathbb{C}^{n}} f(w) K(z, w) e^{-|w|^{2}} d v(w),
$$
where $K(z, w)=e^{z \cdot \overline{w}}$ is the reproducing kernel of $F^{2}$.

In what follows we use standard multi-index notation. For an $n$-tuple
$\alpha=\left(\alpha_{1}, \ldots, \alpha_{n}\right)$ of non-negative integers, we write
$$
|\alpha|=\alpha_{1}+\cdots+\alpha_{n}, \quad \alpha !=\alpha_{1} ! \cdots \alpha_{n} !.
$$
If $z=\left(z_{1}, \cdots, z_{n}\right),$ then $z^{\alpha}=z_{1}^{\alpha_{1}} \cdots z_{n}^{\alpha_{n}}$ and $\partial^{\alpha}=\partial_1^{\alpha_1}\cdots\partial_{n}^{\alpha_{n}}$, where $\partial_{j}$ denotes the partial differentiation with respect to the $j$-th component.

For any positive integer $m$ we consider the space $F^{2,m}$, called the Fock-Sobolev space, consisting of entire functions $f$ on $\mathbb{C}^{n}$ such that
$$
\|f\|_{F^{2, m}} :=\sum_{|\alpha| \leq m}\left\|\partial^{\alpha} f\right\|_{F^{2}}<\infty,
$$
where $\|\cdot \|_{F^2}$ is the norm in $F^{2}$.

Creation and annihilation operators on the Fock space are important operators in quantum field theory. However, these two operators are unbounded operators on the Fock space. For the study of unbounded operators, it is important to understand the domain of definition of the operator. In fact, creation and annihilation operators are bounded from the Fock-Sobolev space with order $1$ to the Fock space. So, the study of Fock-Sobolev spaces can help us to study these two operators.

We know that the Bargmann transform is an isometry between $L^{2}(\mathbb{R}^n)$ and $F^{2}(\mathbb{C}^n)$. The Bargmann transform connects Weyl psedo-differential operators on $L^{2}(\mathbb{R}^n)$ to Toeplitz operators on $F^{2}$.  For example, in \cite{coburn1994}, the authors  studied the boundedness of Toeplitz operators on the Fock space. In that paper, the authors used the Berezin model in the Fock space to obtain the lower bound for the Toeplitz operators. On the other hand, they used Weyl psedo-differential operators to obtain the upper bound for the Toeplitz operators. The Bargmann transform connects these two spaces together and allows tools from one side to be transported to the other for analysis.

Let $W^{2,m}(dx)$ be the classical Sobolev space on $\mathbb{R}^n$. A natural question arises: Is the Bargmann transform an isomorphism between $F^{2,m}$ and $W^{2,m}(\mathbb{R}^n)$? We will answer this question in Section 2. We will recall some facts about the Gauss-Sobolev space in the Gaussian Harmonic Analysis. The study of Gaussian Harmonic Analysis arise from probability theory, quantum mechanics, and differential geometry. A key operator in the theory of Gaussian Harmonic Analysis is the Ornstein-Uhlenbeck operator. Giving the Ornstein-Uhlenbeck operator, we can define Gaussian Bessel potential which is important for our proof, see \cite{Urbina} and \cite{LP}.
In Section 2, we will obtain an isometry between the Fock-Sobolev space and Gauss-Sobolev space. Because of the isometry between the Fock-Sobolev space and Gauss-Sobolev space, we will connect questions in these two spaces together.

For $\varphi \in F^{2}$ consider the integral operator
$$
S_{\varphi} f(z)=\int_{\mathbb{C}^{n}} f(w) e^{z \cdot \overline{w}} \varphi(z-\overline{w}) d \lambda(w),
$$
for any $f\in F^{2,m}$.
In \cite{zhu2015}, Zhu used the Bargman transform to transfer some singular integral operators to $S_{\varphi}$ and proposed an open question about the boundedness of $S_{\varphi}$. In \cite{wick2019}, the authors gave a necessary and sufficient condition for $S_\varphi$ to be bounded on $F^2$. In this paper, we consider the same problem in Fock-Sobolev spaces.

In Section 3, we will study the multipliers on the Gauss-Sobolev space. Then, in Section 4, we will obtain an isomorphism between multipliers in the Gauss-Sobolev space and the set of bounded $S_{\varphi}$. Then we use the conclusion in the Gauss-Sobolev space to characterize the boundedness of the integral operator on the Fock-Sobolev space and study other properties.

The multipliers on the Sobolev spaces has been studied in \cite{Mazya}. In \cite{Liu}, the authors studied the Gaussian Capacity theory in Gauss-Sobolev space with order 1. In this paper, we will use the idea in \cite{Mazya} and some operators in the Gaussian Harmonic analysis to obtain the boundedness of multiplication operator between two Gauss-Sobolev spaces.

\section{Gauss-Sobolev spaces}
In this section, we introduce the Gauss-Bargmann transform and show that the Gauss-Bargmann transform is an isometry that maps the Gauss-Sobolev space to the Fock-Sobolev space. On the other hand, we show that the Bargmann transform is not an isomorphism between the Fock-Sobolev space and the Sobolev space.

First, we introduce the Gauss Sobolev space. Let Gaussian measure $d\gamma$ on $\mathbb{R}^{n}$ be given by
$$
d\gamma(x)=\frac{1}{{(2\pi)}^{\frac{n}{2}}}e^{-\frac{|x|^2}{2}}dx.
$$
For any positive integer $m$, the Gauss-Sobolev space $W^{2,m}(\gamma)$ is the completion of $C_{0}^{\infty}(\mathbb{R}^n)$ with respect to the norm
$$
\|f\|_{W^{2,m}(\gamma)}=\sum_{0\leq|\alpha|\leq m}\left[\int_{\mathbb{R}^n}|\partial^{\alpha}f(x)|^2d\gamma(x)  \right]^{\frac{1}{2}}.
$$
In \cite{Bogachev}, some properties of the Gauss-Sobolev space are discussed.

For any multi-index $\beta=\left(\beta_{1}, \dots, \beta_{n}\right)$, the Hermite function is defined to be
$$ H_{\beta}(x)=\prod_{i=1}^{n}(-1)^{\beta_{i}} e^{x_{i}^{2}} \frac{\partial^{\beta_{i}}}{\partial x_{i}^{\beta_{i}}}\left(e^{-x_{i}^{2}}\right).$$
Then the normalized Hermite function with respect to the Gaussian measure is given by:
$$
h_{\beta}(x)=\frac{1}{(2^{|\beta|} \beta !)^{1 / 2}} H_{\beta}\left(\frac{x}{\sqrt{2}}\right).
$$
That is to say $$\int_{\mathbb{R}^n} h_{\beta}(x)h_{\alpha}(x)d\gamma(x)=\delta_{\alpha \beta},$$
where $\delta_{\alpha \beta}=1$ if $\alpha=\beta$ and $\delta_{\alpha \beta}=0$ if $\alpha\neq\beta$.

For any multi-index $\alpha$ one easily computes that
$$
\partial^{\alpha} h_{\beta}(x)=\left\{\begin{array}{ll}{\left(\prod\limits_{j=1}^{n} \beta_{j}\left(\beta_{j}-1\right) \cdots\left(\beta_{j}-\alpha_{j}+1\right)\right)^{1 / 2} h_{\beta-\alpha}(x),} & {\text {if } \alpha_{j} \leq \beta_{j}, \forall j=1, \ldots, n} \\ {0,} & {\text{otherwise.}}\end{array}\right.
$$
By \cite[Proposition 1.5.4]{Bogachev}, we know that the linear space generated by Hermite polynomials is dense in $W^{2,m}(\gamma)$.

For $z\in \mathbb{C}$, let $e_{\beta}(z)=\frac{z^{\beta}}{\sqrt{\beta!}}$ be the basis of the Fock space; we know that
$$
\partial^{\alpha} e_{\beta}(z)=\left\{\begin{array}{ll}{\left( \prod\limits_{j=1}^{n}\beta_{j}\left(\beta_{j}-1\right) \cdots\left(\beta_{j}-\alpha_{j}+1\right)\right)^{1 / 2} e_{\beta-\alpha}(z),} & {\text {if } \alpha_{j} \leq \beta_{j}, \forall j=1, \ldots, n} \\ {0,} & {\text{otherwise.}}\end{array}\right.
$$
From these two observations, we know that
\begin{equation}\label{2.1}
\|e_{\beta}\|_{F^{2,m}}=\|h_{\beta}\|_{W^{2,m}(\gamma)},
\end{equation}
for any $\beta$.
We define the Gauss-Bargmann transform $G$ mapping the linear span of $\{h_{\beta}\}$ to $F^{2,m}$  such that
$$
Gh_{\beta}=e_{\beta}.
$$
\begin{theorem}
The Gauss-Bargmann transform $G$ is an isometry from the Gauss-Sobolev space $W^{2,m}(\gamma)$ to the Fock-Sobolev space $F^{2,m}$.
\end{theorem}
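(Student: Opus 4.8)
The plan is to reduce the whole statement to the single–multi-index case and then sum over $\alpha$. First I would record the order-zero fact that $G$ maps the orthonormal basis $\{h_\beta\}$ of $L^2(d\gamma)$ onto the orthonormal basis $\{e_\beta\}$ of $F^2$. Consequently, for any finite combination $g=\sum_\beta c_\beta h_\beta$, orthonormality gives $\|g\|_{L^2(d\gamma)}^2=\sum_\beta|c_\beta|^2=\|Gg\|_{F^2}^2$, so $G$ preserves the plain $L^2$ norm on the span. Note that only orthonormality (Parseval for a finite sum) is needed here, not completeness.

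The key step is the intertwining identity $\partial^\alpha(Gf)=G(\partial^\alpha f)$, valid on the span of $\{h_\beta\}$ for every multi-index $\alpha$. This is exactly where the two displayed differentiation formulas enter: both $\partial^\alpha h_\beta$ and $\partial^\alpha e_\beta$ equal the \emph{same} scalar $\left(\prod_{j=1}^n\beta_j(\beta_j-1)\cdots(\beta_j-\alpha_j+1)\right)^{1/2}$ times $h_{\beta-\alpha}$ and $e_{\beta-\alpha}$ respectively, and they vanish under the identical condition. Since these coefficients coincide, $\partial^\alpha Gh_\beta=\partial^\alpha e_\beta$ and $G\partial^\alpha h_\beta$ produce the same element, and by linearity the identity extends to all finite combinations.

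Combining the two observations, for any $f$ in the span and any $\alpha$ with $|\alpha|\le m$ I would write $\|\partial^\alpha(Gf)\|_{F^2}=\|G(\partial^\alpha f)\|_{F^2}=\|\partial^\alpha f\|_{L^2(d\gamma)}$, where the first equality is the intertwining identity and the second is the order-zero isometry applied to $\partial^\alpha f$ (itself a finite combination of the $h_\gamma$). Summing over $|\alpha|\le m$ gives $\|Gf\|_{F^{2,m}}=\|f\|_{W^{2,m}(\gamma)}$. It is worth emphasizing that, because both norms are defined as \emph{sums} of $L^2$ norms rather than single Hilbert norms, the term-by-term equality for each $\alpha$ is precisely what is required, and no further orthogonality between different $\alpha$'s is needed.

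Finally I would pass to the completion: since the linear span of the Hermite functions is dense in $W^{2,m}(\gamma)$ by \cite[Proposition 1.5.4]{Bogachev} and $G$ is norm-preserving there, it extends uniquely to an isometry from $W^{2,m}(\gamma)$ into the complete space $F^{2,m}$. The main obstacle is really the intertwining step, i.e. checking that $G$ commutes with every $\partial^\alpha$; this rests entirely on the exact matching of the differentiation coefficients of $h_\beta$ and $e_\beta$ encoded in the two formulas preceding \eqref{2.1}. The only additional care is that the shift $\beta\mapsto\beta-\alpha$ is injective, so that $\{h_{\beta-\alpha}\}$ and $\{e_{\beta-\alpha}\}$ remain orthonormal families and no cancellation is lost when computing the norms; granting this, the isometry is a formal consequence of orthonormality and density.
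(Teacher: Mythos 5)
Your proof is correct and takes essentially the same route as the paper: the paper's two-line argument invokes that $\{h_\beta\}$ and $\{e_\beta\}$ are complete orthogonal sets in $W^{2,m}(\gamma)$ and $F^{2,m}$ together with the norm identity \eqref{2.1}, and that identity is exactly the matching of differentiation coefficients that you exploit via the intertwining $\partial^\alpha G=G\partial^\alpha$ on the Hermite span, followed by density. If anything, your write-up is more careful than the paper's, since the Sobolev norms here are \emph{sums} of $L^2$ norms rather than Hilbert norms, so the term-by-term equality $\|\partial^\alpha (Gf)\|_{F^2}=\|\partial^\alpha f\|_{L^2(\gamma)}$ for each $\alpha$ is precisely the right thing to prove before summing.
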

\begin{proof}
We know that $\{e_{\beta}\}$ and $\{h_{\beta}\}$ are complete orthogonal sets in $F^{2,m}$ and $W^{2,m}(\gamma)$ respectively. The statement then follows form \eqref{2.1}.
\end{proof}

We want to contrast this new transform with the more well-known Bargmann transform.  Recall that the Bargmann transform is an isometry from $L^2(\mathbb{R}^n,dx)$ to $F^2$ such that
$$B f(z)=\left(\frac{2}{\pi}\right)^{\frac{n}{4}} \int_{\mathbb{R}^{n}} f(x) e^{2 x \cdot z-x^{2}-\frac{z^{2}}{2}} dx,$$
where $z^2=z_1^2+z_2^2+\dots +z_n^2$, $x^2=x_1^2+x_2^2+\dots +x_n^2$ and $x \cdot z=x_1 z_1+x_2 z_2+\dots+x_n z_n$.
Let
$$\widetilde{h}_{\beta}=\left(\frac{2}{\pi}\right)^{\frac{n}{4}}\frac{1}{\sqrt{2^{\beta}\beta!}}e^{-|x|^2}H_{\beta}(\sqrt{2}x),$$
we know that $B\widetilde{h}_{\beta}=e_{\beta}$, see \cite[Theorem 6.8]{zhu}.
That is to say
\begin{align*}
e_{\beta}&=B\widetilde{h}_{\beta}(z)\\
&=\left(\frac{2}{\pi}\right)^{\frac{n}{4}} \int_{\mathbb{R}^{n}} \left(\frac{2}{\pi}\right)^{\frac{n}{4}}\frac{1}{\sqrt{2^{\beta}\beta!}}e^{-|x|^2}H_{\beta}(\sqrt{2}x) e^{2 x \cdot z-x^{2}-\frac{z^{2}}{2}} dx\\
&=\left(\frac{2}{\pi}\right)^{\frac{n}{2}} \int_{\mathbb{R}^{n}} \frac{1}{\sqrt{2^{\beta}\beta!}}e^{\frac{-|x|^2}{4}}H_{\beta}\left(\frac{x}{\sqrt{2}}\right)
e^{ x \cdot z-\frac{x^{2}}{4}-\frac{z^{2}}{2}} \frac{1}{2^n}dx\\
&=\int_{\mathbb{R}^{n}} \frac{1}{\sqrt{2^{\beta}\beta!}}H_{\beta}\left(\frac{x}{\sqrt{2}}\right) e^{ x \cdot z-\frac{z^{2}}{2}} d\gamma(x)\\
&=\int_{\mathbb{R}^{n}} h_{\beta}(x) e^{ x \cdot z-\frac{z^{2}}{2}} d\gamma(x).\\
\end{align*}
By the argument above, we know that for any $f\in W^{2,m}(\gamma)$, we have
$$
Gf(z)=\int_{\mathbb{R}^{n}}f(x) e^{ x \cdot z-\frac{z^{2}}{2}} d\gamma(x).
$$
Similarly, for any $g\in F^{2,m}$, we have
$$
G^{-1}g(x)=\int_{\mathbb{C}^{n}}g(z) e^{ x \cdot \overline{z}-\frac{\overline{z}^{2}}{2}} d\lambda(z).
$$
Next, we will discuss the relationship between the Gauss-Bargmann transform and the Bargmann
transform; the key point will be that the order of smoothness matters for these operators.

Let $C_{\frac{1}{2}}$ be the composition operator form $L^2(\mathbb{R}^n, dx)$ to $L^2(\mathbb{R}^n, dx)$ such that
$C_{\frac{1}{2}}f(x)=f(\frac{x}{2})$, for any $f\in L^2(\mathbb{R}^n, dx)$.
Let $M_{\left(\frac{\pi}{2}\right)^ {\frac{n}{4}}\exp\left(\frac{|x|^2}{4}\right)}$ be the multiplication operator from $L^2(\mathbb{R}^n, dx)$ to $L^2(\mathbb{R}^n, d\gamma)$ such that
$$
M_{\left(\frac{\pi}{2}\right)^ {\frac{n}{4}}\exp\left(\frac{|x|^2}{4}\right)}f(x)=\left(\frac{\pi}{2}\right)^ {\frac{n}{4}}\exp\left(\frac{|x|^2}{4}\right)f(x).
$$
For simplicity of notation, we denote $M_{\left(\frac{\pi}{2}\right)^ {\frac{n}{4}}\exp\left(\frac{|x|^2}{4}\right)}$ with $M$.
\begin{proposition}\label{proposition 2.2}
The relationship between the Bargmann transform and the Gauss-Bargmann transform is given by
$$B=GMC_{\frac{1}{2}}.$$
\end{proposition}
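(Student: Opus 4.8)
The plan is to verify this operator identity directly from the integral representations just derived, by applying the three operators to a test function $f$ in turn and checking that the resulting integral coincides with $Bf$. Since $C_{\frac{1}{2}}$, $M$, and the Gauss-Bargmann transform $G$ are each given by an explicit formula, the whole identity collapses to a single change of variables together with a bookkeeping of normalizing constants; I expect no conceptual difficulty, so the work is essentially computational.

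First I would compute $MC_{\frac{1}{2}}f$: applying $C_{\frac{1}{2}}$ gives $f(x/2)$, and multiplying by the symbol of $M$ yields $\left(\frac{\pi}{2}\right)^{\frac{n}{4}}e^{|x|^2/4}f(x/2)$. Next I would insert this into the integral formula $Gg(z)=\int_{\mathbb{R}^n}g(x)e^{x\cdot z-\frac{z^2}{2}}\,d\gamma(x)$ established above and expand $d\gamma(x)=(2\pi)^{-n/2}e^{-|x|^2/2}\,dx$. The Gaussian factors $e^{|x|^2/4}$ and $e^{-|x|^2/2}$ combine to $e^{-|x|^2/4}$, leaving
\[
GMC_{\frac{1}{2}}f(z)=\left(\frac{\pi}{2}\right)^{\frac{n}{4}}\frac{1}{(2\pi)^{n/2}}\int_{\mathbb{R}^n}f(x/2)\,e^{x\cdot z-\frac{z^2}{2}-\frac{|x|^2}{4}}\,dx.
\]
The substitution $x=2u$, with $dx=2^n\,du$, then sends $x\cdot z$ to $2u\cdot z$ and $|x|^2/4$ to $u^2$, reproducing exactly the exponent $2u\cdot z-u^2-\frac{z^2}{2}$ of the Bargmann kernel.

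It then remains to check that the accumulated constant collapses to the Bargmann normalization: using $(2\pi)^{-n/2}2^n=(2/\pi)^{n/2}$ one finds $\left(\frac{\pi}{2}\right)^{\frac{n}{4}}(2/\pi)^{n/2}=(2/\pi)^{n/4}$, so that $GMC_{\frac{1}{2}}f(z)=\left(\frac{2}{\pi}\right)^{\frac{n}{4}}\int_{\mathbb{R}^n}f(u)e^{2u\cdot z-u^2-\frac{z^2}{2}}\,du=Bf(z)$. The only point requiring a little care is the tracking of these normalizing constants and the convergence of the integrals, the latter handled by a Cauchy--Schwarz estimate against the Gaussian. If one wishes to sidestep any domain subtlety entirely, an alternative route is to verify the identity on the dense span of $\{\widetilde{h}_{\beta}\}$: there the constants and Gaussian weights cancel to give the pointwise identity $MC_{\frac{1}{2}}\widetilde{h}_{\beta}=h_{\beta}$, and combining this with $B\widetilde{h}_{\beta}=e_{\beta}$ and $Gh_{\beta}=e_{\beta}$ yields the claim by linearity and continuity.
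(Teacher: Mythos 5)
Your proposal is correct and follows essentially the same route as the paper: apply $C_{\frac{1}{2}}$, $M$, and $G$ in turn, expand $d\gamma$, and identify the result with $Bf$ after the substitution $x=2u$ (a step the paper elides but which your computation makes explicit, with the constants checking out). The alternative verification on the span of $\{\widetilde{h}_{\beta}\}$ via $MC_{\frac{1}{2}}\widetilde{h}_{\beta}=h_{\beta}$ is also valid, but the paper's argument is the direct integral computation you give first.
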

\begin{proof}  This is simply a computation from the definitions of the operators involved.  For any $f\in L^{2}(\mathbb{R}^n,dx),$ we have
\begin{align*}
GMC_{\frac{1}{2}}f(z)&=\int_{\mathbb{R}^{n}}\left(\frac{\pi}{2}\right)^ {\frac{n}{4}}\exp\left(\frac{|x|^2}{4}\right)f\left(\frac{x}{2}\right) e^{ x \cdot z-\frac{z^{2}}{2}} d\gamma(x)\\
&=\int_{\mathbb{R}^{n}}\left(\frac{\pi}{2}\right)^ {\frac{n}{4}}\exp\left(\frac{|x|^2}{4}\right)f\left(\frac{x}{2}\right) e^{ x \cdot z-\frac{z^{2}}{2}}\frac{1}{{(2\pi)}^{\frac{n}{2}}}e^{-\frac{|x|^2}{2}}dx\\
&=Bf(z)
\end{align*}
to complete the proof.
\end{proof}

 To discuss the relationship between Sobolev spaces, Gauss-Sobolev spaces and Fock-Sobolev spaces, we need some basic facts about Fock-Sobolev spaces.
 The following theorem is a special case of \cite[Theorem 11]{zhu2012}.
\begin{theorem}\label{theorem 2.3}
Suppose $m$ is a non-negative integer, and $f$ is an entire
function on $\mathbb{C}^{n}$ . Then $f \in F^{2, m}$ if and only if every function $z^{\alpha} f(z)$ is in $F^{2}$,
where $|\alpha|=m$ . Moreover, there is a positive constant $c$ such that
$$
c^{-1}\big\||z|^{m} f\big\|_{F^2} \leq \|f\|_{F^{2,m}} \leq c\big\||z|^{m} f\big\|_{F^2}
$$
for all $f \in F^{2, m}$.
\end{theorem}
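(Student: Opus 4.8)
The plan is to reduce everything to the orthonormal basis $\{e_\beta\}$ of $F^2$ and to compare the two resulting weighted $\ell^2$-norms coefficient by coefficient. Write an entire function $f$ as $f=\sum_\beta c_\beta e_\beta$; since $\{e_\beta\}$ is orthonormal in $F^2$, Parseval's identity applies on each factor. Because $\|f\|_{F^{2,m}}$ is a \emph{finite} sum of $F^2$-norms, it is comparable to $\big(\sum_{|\alpha|\le m}\|\partial^\alpha f\|_{F^2}^2\big)^{1/2}$ (with constants depending only on the number of multi-indices $|\alpha|\le m$). Thus the whole statement will follow once I express both $\sum_{|\alpha|\le m}\|\partial^\alpha f\|_{F^2}^2$ and $\||z|^m f\|_{F^2}^2$ as weighted sums $\sum_\beta|c_\beta|^2 W(\beta)$ and compare the two weights uniformly in $\beta$.

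Using the displayed formula for $\partial^\alpha e_\beta$, I get $\|\partial^\alpha f\|_{F^2}^2=\sum_{\beta\ge\alpha}|c_\beta|^2\prod_{j=1}^n\frac{\beta_j!}{(\beta_j-\alpha_j)!}$, so the derivative side equals $\sum_\beta|c_\beta|^2 W_\partial(\beta)$ with $W_\partial(\beta)=\sum_{|\alpha|\le m,\ \alpha\le\beta}\prod_{j=1}^n\frac{\beta_j!}{(\beta_j-\alpha_j)!}$. For the multiplication side, the analogous identity $z^\alpha e_\beta=\big(\tfrac{(\alpha+\beta)!}{\beta!}\big)^{1/2}e_{\alpha+\beta}$ gives $\|z^\alpha f\|_{F^2}^2=\sum_\beta|c_\beta|^2\prod_{j=1}^n\frac{(\beta_j+\alpha_j)!}{\beta_j!}$, and the multinomial expansion $|z|^{2m}=\sum_{|\alpha|=m}\frac{m!}{\alpha!}|z^\alpha|^2$ yields $\||z|^m f\|_{F^2}^2=\sum_\beta|c_\beta|^2 W_z(\beta)$ with $W_z(\beta)=\sum_{|\alpha|=m}\frac{m!}{\alpha!}\prod_{j=1}^n\frac{(\beta_j+\alpha_j)!}{\beta_j!}$.

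The heart of the argument is the uniform two-sided estimate $W_\partial(\beta)\asymp(1+|\beta|)^m\asymp W_z(\beta)$. For $W_z$ the upper bound is immediate from $\frac{(\beta_j+\alpha_j)!}{\beta_j!}\le(|\beta|+m)^{\alpha_j}$, while the lower bound comes from isolating the leading term: by the multinomial theorem $\sum_{|\alpha|=m}\frac{m!}{\alpha!}\prod_j\beta_j^{\alpha_j}=|\beta|^m$, and every factor satisfies $\frac{(\beta_j+\alpha_j)!}{\beta_j!}\ge\beta_j^{\alpha_j}$, so $W_z(\beta)\ge|\beta|^m$; together with monotonicity giving $W_z(\beta)\ge W_z(0)>0$ this forces $W_z(\beta)\gtrsim(1+|\beta|)^m$. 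The estimate for $W_\partial$ is the part I expect to be the main obstacle, because the constraint $\alpha\le\beta$ makes the naive leading-term argument fail when $\beta$ is concentrated or small. The upper bound is still easy, since $\prod_j\frac{\beta_j!}{(\beta_j-\alpha_j)!}\le|\beta|^{|\alpha|}\le(1+|\beta|)^m$ and there are boundedly many $\alpha$. For the lower bound I would split according to the size of $|\beta|$: there is a threshold $T=T(m,n)$ such that for $|\beta|\ge T$ the largest coordinate $\beta_{j_0}$ satisfies $\beta_{j_0}\ge|\beta|/n\ge 2m$, so taking $\alpha$ with $\alpha_{j_0}=m$ and $\alpha_j=0$ otherwise (which is then admissible, $\alpha\le\beta$) gives $\prod_j\frac{\beta_j!}{(\beta_j-\alpha_j)!}=\beta_{j_0}(\beta_{j_0}-1)\cdots(\beta_{j_0}-m+1)\gtrsim\beta_{j_0}^m\gtrsim|\beta|^m$, whence $W_\partial(\beta)\gtrsim(1+|\beta|)^m$; for the finitely many $\beta$ with $|\beta|<T$ the term $\alpha=0$ already gives $W_\partial(\beta)\ge1\gtrsim(1+|\beta|)^m$.

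Combining these estimates gives $\sum_{|\alpha|\le m}\|\partial^\alpha f\|_{F^2}^2\asymp\||z|^m f\|_{F^2}^2$, which is the asserted norm equivalence. The ``if and only if'' statement then follows. If $f\in F^{2,m}$, the equivalence together with $\||z|^m f\|_{F^2}^2=\sum_{|\alpha|=m}\frac{m!}{\alpha!}\|z^\alpha f\|_{F^2}^2$ gives $z^\alpha f\in F^2$ for each $|\alpha|=m$. Conversely, if every such $z^\alpha f\in F^2$, then expanding the entire function $f$ in monomials and applying Parseval termwise (justified by orthogonality and monotone convergence on partial sums) shows $\sum_\beta|c_\beta|^2 W_z(\beta)<\infty$; since $W_z(\beta)\gtrsim1$ this already yields $\sum_\beta|c_\beta|^2<\infty$, i.e. $f\in F^2$, after which the weight comparison $W_\partial\lesssim W_z$ upgrades this to $\|f\|_{F^{2,m}}<\infty$, that is $f\in F^{2,m}$.
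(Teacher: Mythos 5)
Your argument is correct, and every step checks out: the coefficient identities $\|\partial^\alpha f\|_{F^2}^2=\sum_{\beta\ge\alpha}|c_\beta|^2\,\beta!/(\beta-\alpha)!$ and $\|z^\alpha f\|_{F^2}^2=\sum_\beta|c_\beta|^2\,(\beta+\alpha)!/\beta!$ are right, the multinomial expansion turns $\||z|^m f\|_{F^2}^2$ into a single weighted sum, and your two-sided bounds $W_\partial(\beta)\asymp(1+|\beta|)^m\asymp W_z(\beta)$ hold with constants depending only on $m$ and $n$. You also handled the two genuinely delicate points: the lower bound for $W_\partial$ when $\beta$ is small or concentrated (your largest-coordinate threshold device is exactly what is needed, since $\beta_{j_0}\ge|\beta|/n$ makes the single multi-index $\alpha=m e_{j_0}$ admissible), and the use of Parseval for entire functions not known in advance to lie in $F^2$ (justified, as you say, by orthogonality of monomials on balls plus monotone convergence). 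However, you should know that you are not paralleling the paper's proof, because the paper gives none: Theorem \ref{theorem 2.3} is quoted there as a special case of \cite[Theorem 11]{zhu2012}, which treats the full scale of Fock--Sobolev spaces $F^{p,m}$ for all $0<p\le\infty$. Zhu's argument must work for every $p$ and therefore cannot exploit the Hilbert-space structure your proof rests on; it proceeds instead through integral and pointwise estimates for entire functions. What your route buys is a short, self-contained, purely Hilbertian proof of exactly the $p=2$ statement the paper needs, with an explicit weight comparison and constants depending only on $m$ and $n$; what the citation buys the authors is the general-$p$ theorem. The one caveat to keep in mind is that your orthonormal-basis/weight-comparison method does not extend to $p\ne2$, where the same equivalence requires genuinely different tools.
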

 Let $A_j$ and $A_{j}^*$ be two unbounded operators on $F^2$ such that $A_jf(z)=\partial_{z_j}f(z)$ and $A_{j}^*f(z)=z_jf(z)$. By \cite[Lemma 6.13]{zhu}, we have
\begin{equation}\label{2.2}
B\partial_{x_j}B^{-1}=A_j-A_j^* \text{ and } BM_{x_j}B^{-1}=\frac{1}{2}(A_j+A_j^*).
\end{equation}
For any $f\in F^{2,m}$, by Theorem \ref{theorem 2.3}, we have
$$
\|A^*_jf\|_{F^{2,m-1}}= \|z_jf\|_{F^{2,m-1}}\lesssim \||z|^{m-1}z_jf\|_{F^2}\lesssim \|f\|_{F^{2,m}}.
$$
We obtain that $A^*_j$ is bounded form $F^{2,m}$ to $F^{2,m-1}$. That $A_j$ is bounded from $F^{2,m}$ to $F^{2,m-1}$ follows from the definition of Fock-Sobolev spaces.

We also need a theorem about Sobolev spaces.
We define the $(p, m)$-capacity of a compact set $K \subset \mathbb{R}^{n}$ by
$$
C_{p, m}(K)=\inf \left\{\|f\|_{L^{p}(\mathbb{R}^n)}^{p}: \ f \in L^{p}(\mathbb{R}^n),\ f \geq 0,\ B_{m} f \geq 1 \text { on } K\right\},
$$
where $B_{m}$ is the Bessel potential of order $m$.
By \cite[pg. 16]{Mazya}, we have
\begin{equation}\label{2.3}
C_{p, m}(K) \approx \inf \left\{\|u\|_{W^{p,m}(dx)}^{p}:\ u \in C_{0}^{\infty}(\mathbb{R}^n),\ u \geq 1 \text { on } K\right\}.
\end{equation}
Recall that $C_{0}^{\infty}(\mathbb{R}^n)$ is the set of smooth functions on $\mathbb{R}^n$ with compact support.
\begin{theorem}[\!\!\protect{\cite[Theorem 1.2.2]{Mazya}}]
\label{theorem 2.4}
Let $p \in(1, \infty), m\in\mathbb{N}$ and let $\mu$ be a measure in $\mathbb{R}^{n}$.
Then the best constant $C$ in
$$\int_{\mathbb{R}^n}|u(x)|^{p} d \mu(x) \leq C\|u\|_{W^{p,m}(dx)}^{p}, \quad u \in C_{0}^{\infty}(\mathbb{R}^n),$$
is equivalent to
$$\sup _{K} \frac{\mu(K)}{C_{p, m}(K)},$$
where $K$ is an arbitrary compact set in $\mathbb{R}^{n}$.
\end{theorem}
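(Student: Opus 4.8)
The plan is to prove separately the two inequalities hidden in the word ``equivalent.'' Write $A:=\sup_{K}\mu(K)/C_{p,m}(K)$ for the capacitary quantity, the supremum being over compact $K\subset\mathbb{R}^n$, and let $C$ denote the best constant in the trace inequality. The easy direction is $A\lesssim C$: I would fix a compact set $K$ and an arbitrary $u\in C_0^\infty(\mathbb{R}^n)$ with $u\ge 1$ on $K$. Since $|u|^p\ge 1$ on $K$, the trace inequality gives
$$\mu(K)\le\int_{\mathbb{R}^n}|u|^p\,d\mu\le C\|u\|_{W^{p,m}(dx)}^{p}.$$
Taking the infimum over all such admissible $u$ and invoking the capacitary characterization \eqref{2.3}, I obtain $\mu(K)\lesssim C\,C_{p,m}(K)$; dividing by $C_{p,m}(K)$ and taking the supremum over $K$ yields $A\lesssim C$.

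The substantial direction is $C\lesssim A$, i.e.\ that the finiteness of $A$ forces the trace inequality with constant comparable to $A$. For $u\in C_0^\infty(\mathbb{R}^n)$ I would decompose the integral along the dyadic level sets $N_k:=\{x:|u(x)|\ge 2^k\}$, which are compact because $u$ has compact support. The layer-cake formula gives
$$\int_{\mathbb{R}^n}|u|^p\,d\mu\approx\sum_{k\in\mathbb{Z}}2^{kp}\,\mu(N_k)\le A\sum_{k\in\mathbb{Z}}2^{kp}\,C_{p,m}(N_k),$$
where the last step is the definition of $A$ applied to each $N_k$. Thus everything reduces to the \emph{capacitary strong type inequality}
$$\sum_{k\in\mathbb{Z}}2^{kp}\,C_{p,m}(N_k)\lesssim\|u\|_{W^{p,m}(dx)}^{p}.$$

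To attack this inequality I would build explicit admissible functions for each level set out of truncations of $u$. Setting $\phi_k:=\min\{1,\,2^{-k}(|u|-2^k)_+\}$, one checks that $\phi_k\equiv1$ on $N_{k+1}$ and $\phi_k\equiv0$ off $N_k$, so \eqref{2.3} gives $C_{p,m}(N_{k+1})\lesssim\|\phi_k\|_{W^{p,m}(dx)}^{p}$ (after the routine regularization needed to replace the Lipschitz $\phi_k$ by a smooth test function). It then remains to prove $\sum_k 2^{kp}\|\phi_k\|_{W^{p,m}(dx)}^{p}\lesssim\|u\|_{W^{p,m}(dx)}^{p}$, which dominates $\sum_k 2^{kp}C_{p,m}(N_k)$ after a harmless shift of index. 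This I would do by expanding each $\partial^\alpha\phi_k$ with $|\alpha|\le m$ through the chain and Leibniz rules and exploiting that the regions $\{2^k\le|u|<2^{k+1}\}$, where $\phi_k$ is nonconstant, are pairwise disjoint in $k$.

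This last summed derivative estimate is the main obstacle. For $m=1$ it is elementary, since only $\nabla\phi_k=2^{-k}\nabla|u|$ on the annulus $\{2^k\le|u|<2^{k+1}\}$ appears and the weights $2^{kp}$ telescope cleanly against $|\nabla|u||^p$. For general $m$, however, differentiating the nonlinear truncation composed with $|u|$ produces products of lower-order derivatives of $u$ divided by powers of $2^k$, and controlling these cross terms requires Gagliardo--Nirenberg type interpolation (or the potential-theoretic maximal-function arguments of Maz'ya) in order to absorb them into $\|u\|_{W^{p,m}(dx)}^{p}$. This is precisely the nontrivial content of \cite[Theorem 1.2.2]{Mazya}, and I would either reproduce that argument or, as is appropriate here, simply cite it.
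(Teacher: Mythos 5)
The paper offers no proof of this statement at all: it is imported verbatim as \cite[Theorem 1.2.2]{Mazya}, with the attribution built into the theorem header, and it is used downstream purely as a black box (in the proof of the proposition on the Bargmann transform that follows it, with $p=2$). So your final fallback --- ``simply cite it'' --- is exactly what the paper does, and to that extent your proposal is consistent with the paper's treatment.

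The extra material you supply is a mixed bag. The easy direction $A\lesssim C$ and the layer-cake reduction of the trace inequality to the capacitary strong-type inequality $\sum_{k}2^{kp}C_{p,m}(N_k)\lesssim\|u\|_{W^{p,m}(dx)}^{p}$ are correct, and they are indeed the standard skeleton of Maz'ya's argument. But your proposed attack on that inequality via the truncations $\phi_k=\min\{1,\,2^{-k}(|u|-2^k)_+\}$ only works for $m=1$. For $m\geq 2$ the obstruction is not merely ``cross terms needing Gagliardo--Nirenberg interpolation'': truncation does not preserve higher-order Sobolev regularity, i.e.\ $\phi_k$ need not belong to $W^{p,m}(dx)$ at all when $u$ does, because $t\mapsto\min\{1,t\}$ is not $C^m$, and no routine regularization fixes this --- the functions you propose as capacity competitors are simply not admissible. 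This is precisely why the higher-order capacitary strong-type inequality (Maz'ya; in other forms Adams, Dahlberg, Hansson) is proved on the potential side, writing $u=B_m f$ with $f\geq 0$ and truncating or majorizing the potential rather than the function. So if your intent was a self-contained proof, there is a genuine gap at the heart of the hard direction; read instead as a sketch whose crux is delegated to \cite[Theorem 1.2.2]{Mazya}, it is acceptable and coincides with the paper's own (non-)proof.
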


\begin{proposition}The inverse of the Bargmann transform is bounded from the Fock-Sobolev space $F^{2,m}$ to the Sobolev space $W^{2,m}(dx)$. However, if $m\geq1$, the image of $B$ on $W^{2,m}(dx)$ is not contained in $F^{2,m}$.
\end{proposition}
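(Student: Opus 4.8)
The plan is to prove the two assertions separately, in each case using the intertwining relations \eqref{2.2} to convert the statement into a question about the operators $A_j,A_j^*$ on Fock--Sobolev spaces, whose mapping properties are already in hand. For the boundedness of $B^{-1}$, I would start from the first identity in \eqref{2.2}, which rearranges to $\partial_{x_j}B^{-1}=B^{-1}(A_j-A_j^*)$. Since $A_k-A_k^*$ and $A_\ell-A_\ell^*$ commute for $k\neq\ell$, iterating gives $\partial^{\alpha}B^{-1}g=B^{-1}\prod_{j=1}^n(A_j-A_j^*)^{\alpha_j}g$ for every multi-index $\alpha$ and every $g\in F^{2,m}$. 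Because $B$ is an isometry, $B^{-1}$ preserves norms, so $\|\partial^{\alpha}B^{-1}g\|_{L^2(dx)}=\big\|\prod_j(A_j-A_j^*)^{\alpha_j}g\big\|_{F^2}$. I would then invoke the fact recorded just above, that each $A_j$ and each $A_j^*$ is bounded from $F^{2,\ell}$ to $F^{2,\ell-1}$; composing $|\alpha|\leq m$ such factors maps $F^{2,m}$ boundedly into $F^{2,m-|\alpha|}\subseteq F^{2}$, so $\big\|\prod_j(A_j-A_j^*)^{\alpha_j}g\big\|_{F^2}\lesssim\|g\|_{F^{2,m}}$. Summing over $|\alpha|\leq m$ yields $\|B^{-1}g\|_{W^{2,m}(dx)}\lesssim\|g\|_{F^{2,m}}$, which is the first claim.

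For the second claim I would exhibit a single explicit counterexample valid for all $m\geq1$ simultaneously. Subtracting the two identities in \eqref{2.2} gives $A_j^*=B\big(M_{x_j}-\tfrac12\partial_{x_j}\big)B^{-1}$, so for $u\in W^{2,m}(dx)$ one has $z_j(Bu)=A_j^*Bu=B\big(x_ju-\tfrac12\partial_{x_j}u\big)$, and hence $z_jBu\in F^2$ if and only if $x_ju-\tfrac12\partial_{x_j}u\in L^2(dx)$. Taking, for $n=1$, the function $u(x)=(1+x^2)^{-1/2}$ (and for general $n$ multiplying by a fixed nonzero $\phi\in C_0^\infty(\mathbb{R})$ in each of the remaining variables), one checks that $u$ is smooth with $u$ and all its derivatives in $L^2$, so $u\in W^{2,m}(dx)$ for every $m$; on the other hand $x u\to\pm1$ at infinity so $xu\notin L^2$, while $u'\in L^2$, whence $xu-\tfrac12u'\notin L^2$. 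Thus $z_1Bu\notin F^{2}$, and Theorem \ref{theorem 2.3} forces $Bu\notin F^{2,1}$. Since $\|f\|_{F^{2,1}}\leq\|f\|_{F^{2,m}}$ gives $F^{2,m}\subseteq F^{2,1}$ for all $m\geq1$, we conclude $Bu\notin F^{2,m}$, so $B\big(W^{2,m}(dx)\big)\not\subseteq F^{2,m}$.

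The one delicate point is the reduction in the second part: one must know that Fock--Sobolev membership is detected by a single weighted $L^2$ condition, which is exactly the content of Theorem \ref{theorem 2.3} (the equivalence $\|f\|_{F^{2,m}}\approx\big\||z|^mf\big\|_{F^2}$). Combined with the intertwining relation, this turns the apparent analytic obstruction into the elementary observation that multiplication by $x$ need not preserve $L^2(dx)$, even though differentiation preserves membership in the Sobolev scale. This asymmetry between the multiplication and differentiation halves of \eqref{2.2} is precisely what makes $B^{-1}$ bounded while $B$ fails to map $W^{2,m}(dx)$ into $F^{2,m}$.
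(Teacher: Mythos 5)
Your proposal is correct, and its second half takes a genuinely different route from the paper. For the boundedness of $B^{-1}$, you and the paper use the same two ingredients --- the intertwining relations \eqref{2.2} and the boundedness of $A_j,A_j^*$ from $F^{2,\ell}$ to $F^{2,\ell-1}$ --- but you iterate $\partial_{x_j}B^{-1}=B^{-1}(A_j-A_j^*)$ directly, whereas the paper first factors $B^{-1}=C_{\frac12}^{-1}M^{-1}G^{-1}$, applies the product rule to the Gaussian weight in $M^{-1}$, and only then converts back to $B$ and to the operators $A_j,A_j^*$; your version is shorter and avoids the detour through the Gauss--Bargmann transform entirely. The real divergence is in the second claim: the paper argues by contradiction, using the closed graph theorem to upgrade the pointwise conclusion ``$x_1g\in L^2$ for all $g\in W^{2,m}(dx)$'' to boundedness of $M_{x_1}:W^{2,m}(dx)\to L^2(dx)$, and then refutes that boundedness with capacity theory (Theorem~\ref{theorem 2.4} together with the cutoff estimates and \eqref{2.3}). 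You instead exhibit an explicit witness $u(x)=(1+x^2)^{-1/2}$, for which $u\in W^{2,m}(dx)$ for every $m$ but $x_1u\notin L^2$, and use $A_j^*=B\bigl(M_{x_j}-\tfrac12\partial_{x_j}\bigr)B^{-1}$ plus Theorem~\ref{theorem 2.3} to conclude $Bu\notin F^{2,1}\supseteq F^{2,m}$. Your approach is more elementary --- it needs no capacity theory and no closed graph theorem --- and it produces a concrete function on which $B$ fails, which is arguably more informative than the paper's pure existence-by-contradiction argument; what the paper's machinery buys is that it quantifies the failure (the measure $|x_1|^2dx$ is not dominated by the $(2,m)$-capacity), a statement of independent interest in the Maz'ya framework it imports.

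One point you should shore up: the assertion ``$z_jBu\in F^2$ if and only if $x_ju-\tfrac12\partial_{x_j}u\in L^2(dx)$'' cannot be read off from \eqref{2.2} purely formally, since $B$ is only defined on $L^2(dx)$ and the right-hand function is precisely the one you want to show is \emph{not} in $L^2$. The clean fix is the dual formulation of the direction you actually use: if $z_1Bu\in F^2$, then for every $\psi\in C_0^{\infty}$,
\begin{equation*}
\langle B^{-1}(z_1Bu),\psi\rangle_{L^2(dx)}
=\langle z_1Bu,B\psi\rangle_{F^2}
=\langle Bu,\partial_1B\psi\rangle_{F^2}
=\bigl\langle u,\tfrac12\partial_{x_1}\psi+x_1\psi\bigr\rangle_{L^2(dx)}
=\bigl\langle x_1u-\tfrac12\partial_{x_1}u,\psi\bigr\rangle ,
\end{equation*}
using $\langle z_1f,g\rangle_{F^2}=\langle f,\partial_1g\rangle_{F^2}$, the identity $A_1=B\bigl(\tfrac12\partial_{x_1}+M_{x_1}\bigr)B^{-1}$ applied to the nice function $\psi$, and integration by parts. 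This forces $x_1u-\tfrac12\partial_{x_1}u$ to equal an $L^2$ function as a distribution, the desired contradiction, while only ever applying \eqref{2.2} to compactly supported smooth functions where no domain issues arise.
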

\begin{proof}Suppose $f\in F^{2,m}$, we have $B^{-1}f=C_{\frac{1}{2}}^{-1}M^{-1}G^{-1}f.$ We only need to prove that $M^{-1}G^{-1}f\in W^{2,m}(dx).$
For any $\alpha=(\alpha_1,\dots,\alpha_n)$ with $|\alpha|\leq m$, there is a set of constants $\{c_\beta:\beta=(\beta_1,\beta_2,\dots,\beta_n)\}$ such that
\begin{align*}
\left\|\partial^{\alpha}M^{-1}G^{-1}f\right\|_{L^{2}(\mathbb{R}^n,dx)}
&=\left \|\sum_{\beta\leq\alpha}c_{\beta}x^{\beta} M^{-1}\partial^{\alpha-\beta}(G^{-1}f)\right\|_{L^{2}(\mathbb{R}^n,dx)}\\
&\lesssim\sum_{\beta\leq\alpha}\left\|x^{\beta} M^{-1}\partial^{\alpha-\beta}(G^{-1}f)\right\|_{L^{2}(\mathbb{R}^n,dx)}\\
&\lesssim\sum_{\beta\leq\alpha}\left\|x^{\beta}\partial^{\alpha-\beta}(G^{-1}f)\right\|_{L^{2}(\mathbb{R}^n,d\gamma)}\\
&\lesssim\sum_{\beta\leq\alpha}\left\|Gx^{\beta}G^{-1}G\partial^{\alpha-\beta}(G^{-1}f)\right\|_{F^{2}}.
\end{align*}
By direct computation, we know that
$$
M_{x^{\beta}}=2^{\beta}MC_{\frac{1}{2}}M_{x^{\beta}}C^{-1}_{\frac{1}{2}}M \text{ and } \partial^{\alpha-\beta}=\frac{1}{2^{\alpha-\beta}}MC_{\frac{1}{2}}\partial^{\alpha-\beta}C^{-1}_{\frac{1}{2}}M.
$$
Then
$$
\|\partial^{\alpha}M^{-1}G^{-1}f\|_{L^{2}(\mathbb{R}^n,dx)}
\lesssim\sum_{\beta\leq\alpha}\|Bx^{\beta}B^{-1}B\partial^{\alpha-\beta}B^{-1}f\|_{F^{2}}.
$$
By \eqref{2.2}, we have
$\|\partial^{\alpha}M^{-1}G^{-1}f\|_{L^{2}(\mathbb{R}^n,dx)}\lesssim \|f \|_{F^{2,|\alpha|}},$ which means that
$$
\|B^{-1}f\|_{W^{2,m}(dx)}\lesssim\|f\|_{F^{2,m}}.
$$

Next, we prove the second part of this theorem by contradiction.
Suppose $Bg\in F^{2,m}$ for any $g\in W^{2,m}(dx)$, that is to say $GMC_{\frac{1}{2}}g\in F^{2,m}(\gamma)$. Then, for any $g\in W^{2,m}(dx)$, we have
$Mg\in W^{2,m}(\gamma).$ Since $m\geq1$, we have
$\|\partial_{x_1}Mg\|_{L^{2}(\mathbb{R}^n,d\gamma)}<\infty.$
Since
$$\|\partial_{x_1}Mg\|_{L^{2}(\mathbb{R}^n,d\gamma)}=\|M\partial_{x_1}g+\frac{x_1}{2}Mg\|_{L^{2}(\mathbb{R}^n,d\gamma)}$$
and
$\|M\partial_{x_1}g\|_{L^{2}(\mathbb{R}^n,d\gamma)}=\|\partial_{x_1}g\|_{L^{2}(\mathbb{R}^n,dx)}\leq \|g\|_{W^{2,m}(dx)},$
we have
$$\|x_1 g\|_{L^{2}(\mathbb{R}^n,dx)}=\|x_1 Mg\|_{L^{2}(\mathbb{R}^n,d\gamma)}<\infty.$$
We have proved that $M_{x_1}g\in L^{2}(\mathbb{R}^n,dx)$ for any $g\in W^{2,m}(\gamma)$. Since $M_{x_1}$ is a closed operator, we know that $M_{x_1}$ is a bounded operator form $W^{2,m}(dx)$ to $L^{2}(\mathbb{R}^n,dx)$.

Let $d\mu=|x_1|^2dx$. For any positive $N$, let $K_{N}=\overline{B(0,N)}$, there is a $u_N \in C_{0}^{\infty}(\mathbb{R}^n)$ with $u_N = 1 \text { on } K_{N}$ and $u_N = 0 \text { on } B^{c}(0,N+1)$ such that
$$
\sup_{|\alpha|\leq m}\sup_{x}|\partial^{\alpha}u_{N}(x)|\leq c<\infty,
$$
where $c$ is independent of $N$. Thus we have
$$
\|u_N\|^2_{W^{2,m}(dx)}\lesssim|B(0,N+1)|\approx (N+1)^n.
$$
By \eqref{2.3}, we have
$$C_{2, m}(K_N)\lesssim(N+1)^n.$$
Then
$$\sup _{K} \frac{\mu(K)}{C_{2, m}(K)}\geq\frac{\mu(K_N)}{C_{2, m}(K_N)}\gtrsim\frac{\int_{K_N}|x_1|^2dx}{(N+1)^n}.$$
Since $[-\frac{N}{\sqrt{n}},\frac{N}{\sqrt{n}}]^n\subset K_{N}$, we have
$$\int_{K_N}|x_1|^2dx \geq \int_{[-\frac{N}{\sqrt{n}},\frac{N}{\sqrt{n}}]^n}|x_1|^2dx \gtrsim N^{n+2}.$$
That is to say $\sup _{K} \frac{\mu(K)}{C_{p, m}(K)}=\infty$, which is a contradiction by Theorem \ref{theorem 2.4}.
\end{proof}

\section{Multipliers on Gauss-Sobolev Spaces}
In this section, we study multipliers on Gauss-Sobolev spaces. First, we recall the definition of Gauss-Bessel potentials.

The Ornstein-Uhlenbeck differential operator is defined as
$$L=\sum_{j=1}^{n}\partial_{x_j}^2-\sum_{j=1}^{n}x_j\partial_{x_j} .$$
Some similar conclusions about the multipliers in classical Sobolev spaces have been proved in \cite{Mazya}. However, in the Gauss-Sobolev spaces, we need some properties of the Ornstein-Uhlenbeck differential operator.
Let $C_{n}$ be the closed subspace of $L^{2}\left(\gamma\right)$ generated by the linear combinations of $\left\{h_{\beta}:|\beta|=n\right\} .$
For any $s\geq0$, we consider the Gaussian-Bessel potentials defined by
\begin{equation}\label{4.1}
(I-L)^{-s / 2} f=\sum_{n=0}^{\infty}(1+n)^{-s / 2} J_{n} f, \quad \text { for } f \in L^2(\gamma),
\end{equation}
where $J_n$ is the orthogonal projection from $L^2(\gamma)$ to $C_{n}$.
The Gauss-Bessel potential space with order $s$ is
$$
L^{2,s}(\gamma)=\{f\in L^2(\gamma) : f=(I-L)^{-\frac{s}{2}}u\text{ for some } u\in L^{2}(\gamma)\}.
$$
The norm is defined as
$$
\|f\|_{L^{2,s}(\gamma)}=\|u\|_{L^2(\gamma)}, \text{ if } f=(I-L)^{-\frac{s}{2}}u.
$$
\begin{theorem}[\!\!\cite{LP}]\label{Theorem 3.1}
If $s$ is a non-negative integer, then
$$W^{2,s}(\gamma)=L^{2,s}(\gamma).$$
\end{theorem}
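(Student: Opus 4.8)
The plan is to diagonalize both norms against the orthonormal basis $\{h_\beta\}$ of $L^2(\gamma)$ and to reduce the equality of spaces to an elementary two-sided estimate on the Hermite coefficients. Since the linear span of the $h_\beta$ is dense in $W^{2,s}(\gamma)$ (the density result recalled above from \cite{Bogachev}) and, being the image under $(I-L)^{-s/2}$ of the dense span of $\{h_\beta\}$ in $L^2(\gamma)$, is also dense in $L^{2,s}(\gamma)$, it suffices to prove that the two norms are equivalent on this common dense subspace; equality of the completions then follows. Throughout I write $f=\sum_\beta c_\beta h_\beta$ with $c_\beta=\int_{\mathbb{R}^n}f\,h_\beta\,d\gamma$.

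First I would compute the Bessel-potential norm. Because $h_\beta\in C_{|\beta|}$, the projections satisfy $J_n h_\beta=\delta_{n,|\beta|}\,h_\beta$, so the definition \eqref{4.1} gives $(I-L)^{-s/2}h_\beta=(1+|\beta|)^{-s/2}h_\beta$. Hence $f=(I-L)^{-s/2}u$ with $u=\sum_\beta(1+|\beta|)^{s/2}c_\beta h_\beta$, and by the definition of the norm,
$$\|f\|_{L^{2,s}(\gamma)}^2=\|u\|_{L^2(\gamma)}^2=\sum_\beta(1+|\beta|)^s|c_\beta|^2.$$
Next I would compute the Gauss-Sobolev norm using the derivative formula for $\partial^\alpha h_\beta$ recorded earlier. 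For fixed $\alpha$ the map $\beta\mapsto\beta-\alpha$ is injective on $\{\beta:\beta\geq\alpha\}$, so the images $h_{\beta-\alpha}$ remain orthonormal and
$$\|\partial^\alpha f\|_{L^2(\gamma)}^2=\sum_{\beta\geq\alpha}|c_\beta|^2\prod_{j=1}^n\frac{\beta_j!}{(\beta_j-\alpha_j)!}.$$
Summing over $|\alpha|\leq s$ and using that the $\ell^1$ and $\ell^2$ norms on the finitely many indices $\{\alpha:|\alpha|\leq s\}$ are equivalent (with constants depending only on $n,s$), I obtain
$$\|f\|_{W^{2,s}(\gamma)}^2\approx\sum_\beta|c_\beta|^2\,P_s(\beta),\qquad P_s(\beta):=\sum_{|\alpha|\leq s,\ \alpha\leq\beta}\ \prod_{j=1}^n\frac{\beta_j!}{(\beta_j-\alpha_j)!}.$$

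Comparing the two displays, the whole theorem reduces to the weight estimate $P_s(\beta)\approx(1+|\beta|)^s$, uniformly in $\beta$, and this is the main (though elementary) obstacle. For the upper bound I would use $\prod_j \beta_j!/(\beta_j-\alpha_j)!\leq\beta^\alpha\leq|\beta|^{|\alpha|}\leq(1+|\beta|)^s$ together with the fact that there are only boundedly many $\alpha$ with $|\alpha|\leq s$. For the lower bound the $\alpha=0$ term already gives $P_s(\beta)\geq 1$, which suffices on the bounded range $|\beta|\leq 2ns$ where $(1+|\beta|)^s$ is itself bounded; for $|\beta|>2ns$ I would pick the coordinate $j_0$ with $\beta_{j_0}=\max_j\beta_j\geq|\beta|/n> 2s$ and keep only the single term $\alpha=s\,e_{j_0}$ (with $e_{j_0}$ the $j_0$-th standard multi-index), which equals $\beta_{j_0}(\beta_{j_0}-1)\cdots(\beta_{j_0}-s+1)\geq(\beta_{j_0}/2)^s\gtrsim(1+|\beta|)^s$. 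With $P_s(\beta)\approx(1+|\beta|)^s$ in hand, the two coefficient expressions are comparable, so the norms are equivalent on the dense span of the Hermite functions, and passing to completions yields $W^{2,s}(\gamma)=L^{2,s}(\gamma)$.
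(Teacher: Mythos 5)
Your proposal is correct, but there is nothing in the paper to compare it against: the paper does not prove this statement at all, it simply imports it from \cite{LP} as a black box. What you have done, therefore, is supply a self-contained proof of a cited result, and your argument checks out. Diagonalization is the right mechanism at $p=2$: since $h_\beta$ is an eigenvector of $L$ with $J_nh_\beta=\delta_{n,|\beta|}h_\beta$, the potential norm becomes $\sum_\beta(1+|\beta|)^s|c_\beta|^2$; the differentiation formula from Section 2, plus injectivity of $\beta\mapsto\beta-\alpha$, gives $\|\partial^\alpha f\|^2_{L^2(\gamma)}=\sum_{\beta\geq\alpha}|c_\beta|^2\prod_j\beta_j!/(\beta_j-\alpha_j)!$; and your two-sided bound $P_s(\beta)\approx(1+|\beta|)^s$ is right (the upper bound via $\beta^\alpha\leq|\beta|^{|\alpha|}$ over the finitely many $\alpha$ with $|\alpha|\leq s$, the lower bound via the single term $\alpha=s\,e_{j_0}$ with $\beta_{j_0}=\max_j\beta_j$, where $\beta_{j_0}>2s$ indeed forces every factor of $\beta_{j_0}(\beta_{j_0}-1)\cdots(\beta_{j_0}-s+1)$ to exceed $\beta_{j_0}/2$). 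The trade-off against the cited source: the result in \cite{LP} is proved for the full range $1<p<\infty$, where one needs genuinely harder tools (Meyer-type multiplier theorems and Gaussian Riesz transform bounds), whereas your argument is elementary but irreducibly $L^2$-specific, since it lives entirely on orthogonality of the Hermite system. One point you should make explicit rather than leave implicit in ``equality of the completions then follows'': norm equivalence on the common dense span of Hermite polynomials gives an isomorphism of abstract completions, and to upgrade this to the set equality $W^{2,s}(\gamma)=L^{2,s}(\gamma)$ inside $L^2(\gamma)$ you need both spaces to embed continuously \emph{and injectively} into $L^2(\gamma)$ with embeddings agreeing on the dense span. For $L^{2,s}(\gamma)$ this is by definition; for the abstract completion $W^{2,s}(\gamma)$ injectivity of the natural map into $L^2(\gamma)$ follows from closability of $\partial^\alpha$ (if $f_k\to0$ in $L^2(\gamma)$ and $\partial^\alpha f_k\to g_\alpha$ in $L^2(\gamma)$, then testing against $C_0^\infty$ functions and integrating by parts gives $g_\alpha=0$). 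This is standard, and the paper itself tacitly assumes the same identification, but in a written-out proof it is the one step that deserves a sentence.
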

We also need a theorem of interpolation for Gauss-Sobolev spaces. Let $S=\{w\in \mathbb{C}: 0\leq \textnormal{Re}(w)\leq1\}$.
Given a compatible pair of Banach spaces $X_{0}$ and $X_{1},$ let $\mathcal{F}\left(X_{0}, X_{1}\right)$ be
the space of all functions $F$ from $\bar{S}$ into $X_{0}+X_{1}$ with the following properties:
\begin{enumerate}
\item $F$ is bounded and continuous on $\bar{S}$ and analytic in $S$;
\item $y \rightarrow F(k+i y)$ with $k=0,1$ are continuous from the real line into $X_{k}$.
\end{enumerate}
$\mathcal{F}\left(X_{0}, X_{1}\right)$ is clearly a vector space.  We provide $\mathcal{F}=\mathcal{F}\left(X_{0}, X_{1}\right)$ with the norm
$$\|F\|_{\mathcal{F}}=\max \left\{\sup _{y \in \mathbb{R}}\|F(iy)\|_{X_0}, \sup_{y\in\mathbb{R}}\|F(1+i y)\|_{X_1}\right\}.$$
Given $0 \leq \theta \leq 1,$ let $X_{\theta}$ be the space of vectors $v$ in $X_{0}+X_{1}$ such that
$v=f(\theta)$ for some $f$ in $\mathcal{F}\left(X_{0}, X_{1}\right) .$ We norm $X_{\theta}$ with
$\|v\|_{\theta}=\inf \left\{\|f\|_{\mathcal{F}}: v=f(\theta)\right\}.$
\begin{theorem}\label{Theorem 4.2}
Let $0\leq\theta\leq1$. Let $m_0\leq m_{\theta}\leq m_1$ be three non-negative constants with
$$
m_{\theta}=m_0(1-\theta)+m_1\theta,
$$
then
$$\left[L^{2,m_0}(\gamma),L^{2,m_1}(\gamma)\right]_{\theta}=L^{2,m_{\theta}}(\gamma),$$
where $\left[L^{2,m_0}(\gamma),L^{2,m_1}(\gamma)\right]_{\theta}$ is the interpolation space between $L^{2,m_0}(\gamma)$ and $L^{2,m_1}(\gamma)$.
\end{theorem}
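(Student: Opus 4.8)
The plan is to diagonalize both endpoint spaces simultaneously through the Hermite basis and thereby reduce the statement to the classical complex interpolation of weighted $\ell^2$ spaces. First I would record the spectral action of the Ornstein--Uhlenbeck operator: since $Lh_\beta=-|\beta|h_\beta$, we have $(I-L)h_\beta=(1+|\beta|)h_\beta$, so by \eqref{4.1} the Gauss--Bessel potential acts diagonally on the orthonormal basis $\{h_\beta\}$ of $L^2(\gamma)$ by $(I-L)^{-s/2}h_\beta=(1+|\beta|)^{-s/2}h_\beta$. Writing $f=\sum_\beta a_\beta h_\beta$ with $a_\beta=\langle f,h_\beta\rangle_{L^2(\gamma)}$, the definition $\|f\|_{L^{2,s}(\gamma)}=\|(I-L)^{s/2}f\|_{L^2(\gamma)}$ gives
$$
\|f\|_{L^{2,s}(\gamma)}^2=\sum_\beta(1+|\beta|)^s|a_\beta|^2.
$$
Consequently the Hermite coefficient map $\Phi\colon f\mapsto(a_\beta)_\beta$ is an isometric isomorphism of $L^{2,s}(\gamma)$ onto the weighted sequence space $\ell^2(w_s)$, where $w_s(\beta)=(1+|\beta|)^s$.

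Next, since $0\le m_0\le m_1$ we have the continuous inclusions $L^{2,m_1}(\gamma)\hookrightarrow L^{2,m_0}(\gamma)\hookrightarrow L^2(\gamma)$, so $(L^{2,m_0}(\gamma),L^{2,m_1}(\gamma))$ is a compatible couple, and $\Phi$ is one and the same coefficient map on both endpoints. Because complex interpolation is a functor that commutes with isometric isomorphisms of compatible couples, I would conclude
$$
\big[L^{2,m_0}(\gamma),L^{2,m_1}(\gamma)\big]_\theta\cong\big[\ell^2(w_{m_0}),\ell^2(w_{m_1})\big]_\theta
$$
isometrically, so it remains only to identify the right-hand side.

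Finally, I would invoke (or reprove) the classical complex interpolation of weighted $\ell^2$ spaces, $[\ell^2(w_0),\ell^2(w_1)]_\theta=\ell^2(w_0^{1-\theta}w_1^\theta)$. Here $w_{m_0}^{1-\theta}w_{m_1}^\theta(\beta)=(1+|\beta|)^{m_0(1-\theta)+m_1\theta}=(1+|\beta|)^{m_\theta}$, so the interpolation space is exactly $\ell^2(w_{m_\theta})$, which $\Phi^{-1}$ carries back onto $L^{2,m_\theta}(\gamma)$ with equal norm. A self-contained way to prove the inclusion $L^{2,m_\theta}(\gamma)\subset[\,\cdot\,]_\theta$ is to set $u=(I-L)^{m_\theta/2}f$ and test with $F(z)=(I-L)^{-[(1-z)m_0+zm_1]/2}u$: since the purely imaginary powers $(I-L)^{iyc}$ are unitary on $L^2(\gamma)$, one checks $F(\theta)=f$, $\|F(iy)\|_{L^{2,m_0}(\gamma)}=\|F(1+iy)\|_{L^{2,m_1}(\gamma)}=\|u\|_{L^2(\gamma)}$, and $F$ is analytic, giving $\|f\|_\theta\le\|f\|_{L^{2,m_\theta}(\gamma)}$; the reverse inequality follows by duality or by the same computation carried out in the sequence model. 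I expect the main point to be precisely this last identification: the diagonalization is routine bookkeeping and the spectral calculus for $I-L$ needs no hard estimates once $\{h_\beta\}$ is in hand, so the genuine content lies in verifying that the admissible analytic functions realize exactly the geometric-mean weight $(1+|\beta|)^{m_\theta}$.
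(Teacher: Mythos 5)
Your proposal is correct, and it runs on the same engine as the paper's proof---the chaos decomposition $f=\sum_n J_nf$ and the diagonal action $(I-L)^{-s/2}=\sum_n(1+n)^{-s/2}J_n$ from \eqref{4.1}---but it is packaged differently, as a reduction to a classical theorem rather than as a direct argument. The paper proves both inclusions in situ: the inclusion $L^{2,m_\theta}(\gamma)\subset[\,\cdot\,]_\theta$ by exhibiting exactly your competitor $F(z)=\sum_n(1+n)^{-[m_0(1-z)+m_1z]/2}J_nf$, and the reverse inclusion by pairing a near-optimal $F_\epsilon(z)$ against a fixed $g\in L^2(\gamma)$, truncating to $n\leq l$, and applying the three-lines lemma to the scalar function
$$H(z)=\sum_{n=0}^{l}(\sqrt{1+n})^{m_0(1-z)+m_1z}\langle F_{\epsilon}(z),J_{n}g\rangle_{L^{2}(\gamma)},$$
which is nothing but a hands-on proof, in the function-space model, of the weighted-sequence-space theorem you cite. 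You instead transfer isometrically to $\ell^2(w_s)$ via the Hermite coefficient map and invoke the Stein--Weiss/Calder\'on identity $[\ell^2(w_0),\ell^2(w_1)]_\theta=\ell^2(w_0^{1-\theta}w_1^{\theta})$. Your route buys modularity and makes transparent that the statement is purely about the weights $(1+n)^s$; the paper's route buys self-containedness, and its truncation-plus-three-lines device is precisely what makes the hard inclusion elementary. On that point, note that of your two suggested fallbacks for the hard inclusion, ``the same computation in the sequence model'' is literally the paper's argument transported by $\Phi$, while ``by duality'' is the one place where care is needed: it requires the duality theorem for the complex method (with the density of $X_0\cap X_1$ in both endpoints, which does hold here since finite Hermite sums are dense), so it imports nontrivial machinery rather than avoiding it. With either fallback made explicit, your identification is complete and the norms match, so the proposal stands as a valid, somewhat more modular alternative to the paper's proof.
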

\begin{proof} Since $L^{2,m_1}\subset L^{2,m_0}$, we know that $L^{2,m_1}+ L^{2,m_0}= L^{2,m_0}$. If $u\in L^{2,m_{\theta}}(\gamma)$, then there is $f\in L^2(\gamma)$ such that
$$u=(I-L)^{-m_{\theta} / 2}f.$$
For any $z\in \{w: 0\leq \textnormal{Re}(w)\leq1\}$, we define
$$
F(z)=\sum_{n=0}^{\infty}\left(\frac{1}{\sqrt{1+n}}\right)^{m_0(1-z)+m_1z} J_{n} f.
$$
It is easy to check that $F(z)$ is a vector-valued function from $\{w: 0\leq \textnormal{Re}(w)\leq1\}$ to
$L^{2,m_0}(\gamma)$ which is continuous on $\{w: 0\leq \textnormal{Re}(w)\leq1\}$ and analytic on $\{w: 0< \textnormal{Re}(w)<1\}$.
We know that
$$
F(\theta)=u.
$$
Then we have
$$\|u\|_{\theta}\leq \|F\|_\mathcal{F}\leq \|f\|_{L^{2}(\gamma)}=\|u\|_{L^{2,m_{\theta}}(\gamma)}.$$
Conversely, if $u\in \left[L^{2,m_0}(\gamma),L^{2,m_1}(\gamma)\right]_{\theta}$, then for any $\epsilon>0$,
there is a
$$F_{\epsilon}\in\mathcal{F}(L^{2,m_0}(\gamma), L^{2,m_1}(\gamma)).$$
with $F_{\epsilon}(\theta)=u$ such that
$$\|F_{\epsilon}\|_{\mathcal{F}}\leq\|u\|_{\theta}+\epsilon.$$
For any $g\in L^{2}(\gamma)$, $l\in \mathbb{N}$ and $z\in S$  , we define
$$H(z)=\sum_{n=0}^{l}(\sqrt{1+n})^{m_0(1-z)+m_1z}\langle F_{\epsilon}(z),J_{n}g \rangle_{L^{2}(\gamma)}.$$
It is easy to show that $H(z)$ is bounded and continuous on $\overline{S}$ and analytic in $S$. We consider
\begin{align*}
|H(ix)|=\left|\left\langle\sum_{n=0}^{l}(\sqrt{1+n})^{m_0(1-ix)+m_1ix} J_{n}F_{\epsilon}(ix),g \right\rangle_{L^{2}(\gamma)}\right|.
\end{align*}
Since $F_{\epsilon}(ix)\in L^{2,m_0}(\gamma)$ for any $x\in \mathbb{R}^n$. Then, for any $x\in \mathbb{R}$, there is $f_{x}\in L^{2}(\gamma)$  such that
$$F_{\epsilon}(ix)=(I-L)^{-\frac{m_0}{2}} f_x=\sum_{n=0}^{\infty}\left(\frac{1}{\sqrt{1+n}}\right)^{m_0} J_{n} f_x.$$
Then, we have
\begin{align*}
\sup_{x\in\mathbb{R}}\left\|\sum_{n=0}^{l}(\sqrt{1+n})^{m_0(1-ix)+m_1ix} J_{n}F_{\epsilon}(ix)\right\|_{L^2(\gamma)}
=&\sup_{x\in\mathbb{R}}\left\|\sum_{n=0}^{l}(\sqrt{1+n})^{-ixm_0+m_1ix} J_{n}f_x\right\|_{L^2(\gamma)}\\
\leq&\sup_{x\in\mathbb{R}}\|f_x\|_{L^2(\gamma)} =\sup_{x\in\mathbb{R}} \|F_{\epsilon}(ix)\|_{W^{2,m_0}(\gamma)}\\
\leq&\|F_{\epsilon}\|_{\mathcal{F}}.
\end{align*}
Then $\sup_{x\in \mathbb{R}}|H(ix)|\leq \|F_{\epsilon}\|_{\mathcal{F}} \|g\|_{L^2}$.
Similarly, we can obtain
$$\sup_{x\in \mathbb{R}}|H(1+ix)|\leq \|F_{\epsilon}\|_{\mathcal{F}} \|g\|_{L^2(\gamma)}.$$
By the Three Line Lemma, see \cite[pg. 28]{zhu1990}, we have $|H(\theta)|\leq \|F_{\epsilon}\|_{\mathcal{F}} \|g\|_{L^2(\gamma)}.$ That is to say
$$
\left|\left\langle \sum_{n=0}^{l}(\sqrt{1+n})^{m_{\theta}} J_{n}u,g \right\rangle_{L^{2}(\gamma)}\right|\leq \|F_{\epsilon}\|_{\mathcal{F}} \|g\|_{L^2(\gamma)},
$$
for any $l\in \mathbb{N}$ and $g\in L^2(\gamma)$. We obtain $\sum_{n=0}^{\infty}(\sqrt{1+n})^{m_{\theta}} J_{n}u\in L^2(\gamma)$ and
$$\left\|\sum_{n=0}^{\infty}(\sqrt{1+n})^{m_{\theta}} J_{n}u\right\|_{L^2(\gamma)}\leq \|F_{\epsilon}\|_{\mathcal{F}}.$$
Since $u=(I-L)^{-\frac{m_{\theta}}{2}} \left[\sum_{n=0}^{\infty}(\sqrt{1+n})^{m_{\theta}} J_{n}u\right]$, we have
$$\|u\|_{L^{2,m_{\theta}}}\leq \left\|\sum_{n=0}^{\infty}(\sqrt{1+n})^{m_{\theta}} J_{n}u\right\|_{L^2(\gamma)} \leq \|u\|_{\theta}+\epsilon$$
to complete the proof.
\end{proof}
Before proving the next lemma, we need some additional notation. For two multi-indexes $\alpha=(\alpha_1,\ldots,\alpha_n)$ and $\beta=(\beta_1,\ldots,\beta_n)$, if for all $k=1,\ldots,n$ we have $\alpha_k\leq\beta_k$, then we write
$$\alpha\leq\beta.$$

For any $u\in L^{1}_{loc}$, let $M_u$ denote the multiplication operator on $W^{2,m}(\gamma)$; $u$ is called a multiplier on $W^{2,m}(\gamma)$ if $M_u$ is bounded on $W^{2,m}(\gamma)$.
Let $M\left(W^{2,m}(\gamma) \rightarrow W^{2,m'}(\gamma)\right)$ denote the set of bounded multiplication operators from $W^{2,m}(\gamma)$ to
$W^{2,m'}(\gamma)$. If $m=m'$, we simply write $M\left(W^{2,m}(\gamma) \rightarrow W^{2,m}(\gamma)\right)$ as $MW^{2,m}(\gamma)$.
we have following simple lemma.
\begin{lemma}\label{lemma 3.3}
For any $u \in C^{\infty}(\mathbb{R}^n)$,  we have
$$
\|u\|_{MW^{2,m}(\gamma)}\lesssim \sum_{|\alpha|\leq m}\sup_{x}|\partial^{\alpha}u(x)|.
$$
\end{lemma}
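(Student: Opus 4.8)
The plan is to estimate the $W^{2,m}(\gamma)$-norm of the product $uf$ directly via the Leibniz rule, and then pass from test functions to the completion by density. Since $W^{2,m}(\gamma)$ is by definition the completion of $C_{0}^{\infty}(\mathbb{R}^n)$, and since $uf\in C_{0}^{\infty}(\mathbb{R}^n)$ whenever $u\in C^{\infty}(\mathbb{R}^n)$ and $f\in C_{0}^{\infty}(\mathbb{R}^n)$, it suffices to establish the bound
$$
\|uf\|_{W^{2,m}(\gamma)}\lesssim\left(\sum_{|\alpha|\leq m}\sup_{x}|\partial^{\alpha}u(x)|\right)\|f\|_{W^{2,m}(\gamma)}
$$
for all $f\in C_{0}^{\infty}(\mathbb{R}^n)$; the general case then follows by continuity (and if the right-hand side is infinite there is nothing to prove).

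To prove this bound, I would fix a multi-index $\alpha$ with $|\alpha|\leq m$ and apply the Leibniz rule
$$
\partial^{\alpha}(uf)=\sum_{\beta\leq\alpha}\binom{\alpha}{\beta}(\partial^{\beta}u)(\partial^{\alpha-\beta}f).
$$
Taking $L^{2}(\gamma)$-norms, using the triangle inequality, and pulling out the sup-norm of the smooth factor gives
$$
\big\|(\partial^{\beta}u)(\partial^{\alpha-\beta}f)\big\|_{L^{2}(\gamma)}\leq\Big(\sup_{x}|\partial^{\beta}u(x)|\Big)\,\big\|\partial^{\alpha-\beta}f\big\|_{L^{2}(\gamma)}.
$$
Since $\beta\leq\alpha$ forces $|\beta|\leq m$ and $|\alpha-\beta|\leq m$, I would then bound $\sup_{x}|\partial^{\beta}u(x)|$ by $\sum_{|\gamma|\leq m}\sup_{x}|\partial^{\gamma}u(x)|$ and $\|\partial^{\alpha-\beta}f\|_{L^{2}(\gamma)}$ by $\|f\|_{W^{2,m}(\gamma)}$. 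Summing over $\beta\leq\alpha$ and then over $|\alpha|\leq m$ absorbs the binomial coefficients and the number of admissible multi-indices into a constant depending only on $n$ and $m$, which yields the desired estimate. Note that the Gaussian weight plays no active role: it sits inside $\|\cdot\|_{L^{2}(\gamma)}$ identically on both sides, and every factor involving $u$ is extracted in sup-norm before integrating.

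Because each step is elementary, there is essentially no analytic obstacle here; the only point that requires care is the passage to the completion. One must verify that $M_{u}$ is well-defined on all of $W^{2,m}(\gamma)$, and this is precisely what the density of $C_{0}^{\infty}(\mathbb{R}^n)$ together with the uniform bound just derived provides: the estimate on test functions guarantees that $M_{u}$ extends continuously to the whole space and that its operator norm $\|u\|_{MW^{2,m}(\gamma)}$ obeys the stated inequality.
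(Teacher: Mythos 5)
Your proof is correct and follows essentially the same route as the paper, whose proof is just the one-line remark that the claim follows from the definition of the $W^{2,m}(\gamma)$-norm, the product rule, and immediate estimates---exactly the Leibniz-rule computation plus density argument you spell out. The only cosmetic point is that your dummy multi-index $\gamma$ in $\sum_{|\gamma|\leq m}\sup_{x}|\partial^{\gamma}u(x)|$ clashes with the paper's notation for the Gaussian measure, so it should be renamed.
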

\begin{proof}The proof is obvious as it follows from the definition of the norm of $W^{2,m}(\gamma)$, the product rule for differentiation and immediate estimates.
\end{proof}

\begin{lemma}\label{Lemma 4.3} Suppose that
$$
u \in MW^{2,m}(\gamma) \cap ML^{2}(\gamma).
$$
Then, for any multi-index $\alpha$ of order $|\alpha| \leq m$
$$
\partial^{\alpha} u \in M\left(W^{2,m}(\gamma) \rightarrow W^{2,m-|\alpha|}(\gamma)\right)
$$
and for any $\epsilon$, there is a $c(\epsilon)$ such that
$$\|\partial^{\alpha} u\|_{M(W^{2,m}(\gamma) \rightarrow W^{2,m-|\alpha|}(\gamma))}
\leq \varepsilon\|u\|_{ML^{2}(\gamma)}+c(\varepsilon)\|u\|_{MW^{2,m}(\gamma)}.$$
\end{lemma}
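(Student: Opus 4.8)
The plan is to deduce the additive estimate from a multiplicative (interpolation) inequality. By the weighted arithmetic--geometric mean inequality, once I show
$$\|\partial^{\alpha}u\|_{M(W^{2,m}(\gamma)\to W^{2,m-|\alpha|}(\gamma))}\lesssim \|u\|_{ML^{2}(\gamma)}^{1-|\alpha|/m}\,\|u\|_{MW^{2,m}(\gamma)}^{|\alpha|/m},$$
the stated bound follows: writing $A=\|u\|_{ML^2(\gamma)}$, $B=\|u\|_{MW^{2,m}(\gamma)}$ and $\theta=|\alpha|/m$, the elementary inequality $A^{1-\theta}B^{\theta}\le(1-\theta)\eta A+\theta\eta^{-(1-\theta)/\theta}B$ valid for every $\eta>0$ produces $\varepsilon\|u\|_{ML^2(\gamma)}+c(\varepsilon)\|u\|_{MW^{2,m}(\gamma)}$ after choosing $\eta$ small. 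The case $|\alpha|=0$ being trivial, I may assume $1\le|\alpha|\le m$.

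Next I would convert everything into operator norms on $L^2(\gamma)$ using the Gauss--Bessel potentials and Theorem \ref{Theorem 3.1}. Since $\|f\|_{W^{2,s}(\gamma)}=\|(I-L)^{s/2}f\|_{L^2(\gamma)}$, the multiplier norm $\|\partial^{\alpha}u\|_{M(W^{2,m}\to W^{2,m-|\alpha|})}$ equals the $L^2(\gamma)\to L^2(\gamma)$ norm of $S:=(I-L)^{(m-|\alpha|)/2}M_{\partial^{\alpha}u}(I-L)^{-m/2}$, while $\|u\|_{ML^2(\gamma)}=\|M_u\|_{L^2\to L^2}$ and $\|u\|_{MW^{2,m}(\gamma)}=\|(I-L)^{m/2}M_u(I-L)^{-m/2}\|_{L^2\to L^2}$. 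Two structural facts drive the argument: first, $M_{\partial^{\alpha}u}$ is the iterated commutator $\mathrm{ad}_{\partial_1}^{\alpha_1}\cdots\mathrm{ad}_{\partial_n}^{\alpha_n}(M_u)$, since $[\partial_j,M_u]=M_{\partial_j u}$; second, because $\partial^{\alpha}$ sends the Hermite level $C_n$ into $C_{n-|\alpha|}$ and $(I-L)^{s/2}$ acts as the scalar $(1+n)^{s/2}$ on $C_n$, the operator $R_{\alpha}:=\partial^{\alpha}(I-L)^{-|\alpha|/2}$ is bounded on $L^2(\gamma)$ and $\partial^{\alpha}$ intertwines $(I-L)$ with a shifted copy up to bounded diagonal factors.

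With these reformulations in hand, I would prove the multiplicative inequality by complex interpolation via Theorem \ref{Theorem 4.2} (equivalently, Stein's analytic interpolation with the three-line lemma). The model family is $U_z:=(I-L)^{mz/2}M_u(I-L)^{-mz/2}$: on $\mathrm{Re}\,z=0$ the conjugation is unitary, so $\|U_{iy}\|_{L^2\to L^2}=\|u\|_{ML^2(\gamma)}$, while on $\mathrm{Re}\,z=1$ one gets $\|U_{1+iy}\|_{L^2\to L^2}=\|u\|_{MW^{2,m}(\gamma)}$; this already recovers $\|u\|_{MW^{2,s}(\gamma)}\lesssim\|u\|_{ML^2(\gamma)}^{1-s/m}\|u\|_{MW^{2,m}(\gamma)}^{s/m}$. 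To reach $\partial^{\alpha}u$ rather than $u$, I would build the differentiation into the family, replacing $M_u$ by the iterated commutator conjugated by $(I-L)^{mz/2}$ and compensating with the fixed outer potentials together with the bounded operators $R_{\alpha}$, arranged so that the value at $z=\theta=|\alpha|/m$ is exactly $S$ while the two boundary norms remain $\|u\|_{ML^2(\gamma)}$ and $\|u\|_{MW^{2,m}(\gamma)}$. Applying Theorem \ref{Theorem 4.2} then yields $\|S\|_{L^2\to L^2}\lesssim\|u\|_{ML^2(\gamma)}^{1-\theta}\|u\|_{MW^{2,m}(\gamma)}^{\theta}$, the desired inequality; Lemma \ref{lemma 3.3} is available to control the smooth truncations needed to make the family well defined.

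The main obstacle is precisely the construction of this analytic family and the verification of its boundary bounds. A naive term-by-term estimate of the commutator is too lossy: bounding $(I-L)^{(m-|\alpha|)/2}\partial^{\alpha}M_u(I-L)^{-m/2}$ and $(I-L)^{(m-|\alpha|)/2}M_u\partial^{\alpha}(I-L)^{-m/2}$ separately only produces the exponent $1$ on $\|u\|_{MW^{2,m}(\gamma)}$, which Young's inequality cannot convert into the stated form. The gain to the sharp exponent $\theta=|\alpha|/m$ comes entirely from the commutator cancellation, so the family must keep the commutator intact rather than split it; making it analytic, bounded on the closed strip, and correctly valued at the endpoints and at $z=\theta$ --- using the diagonal intertwining of $\partial^{\alpha}$ with $(I-L)$ and the boundedness of $R_{\alpha}$ --- is the technical heart of the proof.
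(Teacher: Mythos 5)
Your proposal has a genuine gap at exactly the point you yourself flag as ``the technical heart'': the analytic family that keeps the commutator structure intact, is bounded and analytic on the closed strip, equals $S$ at $z=\theta$, and has boundary norms $\|u\|_{ML^2(\gamma)}$ and $\|u\|_{MW^{2,m}(\gamma)}$, is never constructed. What you do construct, $U_z=(I-L)^{mz/2}M_u(I-L)^{-mz/2}$, only yields the interpolation inequality $\|u\|_{MW^{2,s}(\gamma)}\lesssim\|u\|_{ML^{2}(\gamma)}^{1-s/m}\|u\|_{MW^{2,m}(\gamma)}^{s/m}$ for the multiplier $u$ itself, not the sharp single-exponent estimate for $\partial^{\alpha}u$ that your argument requires; so as written the proof does not go through.

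Moreover, the reason you give for why this hard construction is unavoidable is incorrect, and this is precisely where your attempt diverges from the paper's (much easier) proof. You claim a term-by-term treatment of the commutator is fatally lossy because it ``only produces the exponent $1$ on $\|u\|_{MW^{2,m}(\gamma)}$, which Young's inequality cannot convert.'' Two objections. First, a term with coefficient $\|u\|_{MW^{2,m}(\gamma)}$ to the power $1$ (and no $\|u\|_{ML^2(\gamma)}$ factor) is harmless: it is absorbed directly into $c(\varepsilon)\|u\|_{MW^{2,m}(\gamma)}$; what must be avoided is only a fixed-constant coefficient on $\|u\|_{ML^{2}(\gamma)}$. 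Second, the expansion does not in fact produce exponent $1$: it produces the intermediate multiplier norms $\|u\|_{MW^{2,m-|\alpha|+|\beta|}(\gamma)}$ for $0\le\beta\le\alpha$ (in your operator language, $(I-L)^{(m-|\alpha|)/2}M_u\partial^{\alpha}(I-L)^{-m/2}=\bigl[(I-L)^{(m-|\alpha|)/2}M_u(I-L)^{-(m-|\alpha|)/2}\bigr]\bigl[(I-L)^{(m-|\alpha|)/2}R_{\alpha}(I-L)^{-(m-|\alpha|)/2}\bigr]$, which is controlled by $\|u\|_{MW^{2,m-|\alpha|}(\gamma)}$ times a constant, not by $\|u\|_{MW^{2,m}(\gamma)}$). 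Each intermediate norm is then bounded, by the very interpolation inequality your family $U_z$ gives, by $\|u\|_{MW^{2,m}(\gamma)}^{(m-|\alpha|+|\beta|)/m}\|u\|_{ML^{2}(\gamma)}^{(|\alpha|-|\beta|)/m}$, and since the exponent on $\|u\|_{ML^2(\gamma)}$ is strictly positive whenever $\beta<\alpha$, weighted Young's inequality converts every such product into $\varepsilon\|u\|_{ML^{2}(\gamma)}+c(\varepsilon)\|u\|_{MW^{2,m}(\gamma)}$. This is exactly the paper's proof: it uses the rearranged Leibniz identity $g\,\partial^{\alpha}u=\sum_{0\le\beta\le\alpha}\frac{\alpha!}{\beta!(\alpha-\beta)!}\partial^{\beta}\bigl(u(-\partial)^{\alpha-\beta}g\bigr)$ to get $\|\partial^{\alpha}u\|_{M(W^{2,m}(\gamma)\to W^{2,m-|\alpha|}(\gamma))}\lesssim\sum_{\beta}\|u\|_{MW^{2,m-|\alpha|+|\beta|}(\gamma)}$, then Theorem \ref{Theorem 3.1} and Theorem \ref{Theorem 4.2} to interpolate each intermediate norm, then Young. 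So the route you dismissed is the correct one, and the sharp exponent $\theta=|\alpha|/m$ you set out to prove is neither needed nor established by your argument.
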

\begin{proof}If $\alpha=0$, the conclusion is obvious. We suppose that $\alpha\neq0$.
By \cite[pg. 39]{Mazya}, for any $g\in W^{2,m}(\gamma) $, just using the product rule applied to $ug$ and rearranging, we have
$$g \partial^{\alpha} u=\sum_{\{\beta: \alpha \geq \beta \geq 0\}} \frac{\alpha !}{\beta !(\alpha-\beta) !}\partial^{\beta}(u(-\partial)^{\alpha-\beta} g).$$
Then
\begin{align*}
\|g \partial^{\alpha} u\|_{W^{2,m-|\alpha|}(\gamma)}
&\lesssim \sum_{\{\beta: \alpha \geq \beta \geq 0\}}\|u \partial^{\alpha-\beta} g\|_{W^{2,m-|\alpha|+|\beta|}(\gamma)}\\
&\leq\sum_{\{\beta: \alpha \geq \beta \geq 0\}}\|u \|_{MW^{2,m-|\alpha|+|\beta|}(\gamma)}\|\partial^{\alpha-\beta} g\|_{W^{2,m-|\alpha|+|\beta|}(\gamma)}\\
&\leq\sum_{\{\beta: \alpha \geq \beta \geq 0\}}\|u \|_{MW^{2,m-|\alpha|+|\beta|}(\gamma)}\|g\|_{W^{2,m}(\gamma)}.\\
\end{align*}
Thus, by Theorem \ref{Theorem 4.2} and Theorem \ref{Theorem 3.1}, we have
\begin{equation}\label{4.2}
\begin{split}
&\|\partial^{\alpha} u\|_{M(W^{2,m}(\gamma)\rightarrow W^{2,m-|\alpha|}(\gamma))}\\
\leq&\sum_{\{\beta: \alpha \geq \beta \geq 0\}}\|u \|_{MW^{2,m-|\alpha|+|\beta|}(\gamma)}\\
\leq&\sum_{\{\beta: \alpha \geq \beta \geq 0\}}\|u \|^{\frac{m-|\alpha|+|\beta|}{m}}_{MW^{2,m}(\gamma)}\|u \|^{\frac{|\alpha|-|\beta|}{m}}_{ML^{2}(\gamma)}\\
\leq&\sum_{\{\beta: \alpha > \beta \geq 0\}}\|u \|^{\frac{m-|\alpha|+|\beta|}{m}}_{MW^{2,m}(\gamma)}\|u \|^{\frac{|\alpha|-|\beta|}{m}}_{ML^{2}(\gamma)}+\|u \|_{MW^{2,m}(\gamma)}.\\
\end{split}
\end{equation}
For any $\epsilon>0$, by Young's inequality, we have
\begin{align*}
&\sum_{\{\beta: \alpha > \beta \geq 0\}}\|u \|^{\frac{m-|\alpha|+|\beta|}{m}}_{MW^{2,m}(\gamma)}\|u \|^{\frac{|\alpha|-|\beta|}{m}}_{ML^{2}(\gamma)}\\
=&\sum_{\{\beta: \alpha > \beta \geq 0\}}{\epsilon}^{\frac{|\beta|-|\alpha|}{m}}
\|u \|^{\frac{m-|\alpha|+|\beta|}{m}}_{MW^{2,m}(\gamma)}(\epsilon\|u \|)^{\frac{|\alpha|-|\beta|}{m}}_{ML^{2}(\gamma)}\\
\lesssim&\sum_{\{\beta: \alpha > \beta \geq 0\}}\Big[\frac{m{\epsilon}^{\frac{m-|\beta|+|\alpha|}{m}}}{|\beta|-|\alpha|}
\|u \|_{MW^{2,m}(\gamma)}+\frac{m\epsilon}{|\beta|-|\alpha|}\|u \|_{ML^{2}(\gamma)}\Big]
\end{align*}
to complete the proof.
\end{proof}
\begin{lemma}\label{Lemma 4.4}
For any non-negative integer $m$ and $g\in L^2(\gamma)$, there is a set of functions $\{g_{\alpha}: |\alpha|\leq m\}$ such that
$$g=\sum_{|\alpha|\leq m}\partial^{\alpha}g_{\alpha}\quad\text{ and }\quad\|g_{\alpha}\|_{W^{2,m}(\gamma)}\lesssim \|g\|_{L^2(\gamma)}.$$
\end{lemma}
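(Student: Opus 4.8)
The plan is to give an explicit formula for the $g_\alpha$ using the Gaussian--Bessel potential and the fact that differentiation and its $L^2(\gamma)$-adjoint act almost diagonally on the Hermite basis. Write $\delta_j:=x_j-\partial_{x_j}$ for the adjoint of $\partial_{x_j}$ in $L^2(\gamma)$ and $\delta^\alpha=\delta_1^{\alpha_1}\cdots\delta_n^{\alpha_n}$, so that $(\partial^\alpha)^{*}=\delta^\alpha$. From the formula for $\partial^\alpha h_\beta$ recorded in Section 2 one gets the companion identity $\delta^\alpha h_\gamma=(\prod_j (\gamma_j+\alpha_j)!/\gamma_j!)^{1/2}h_{\gamma+\alpha}$, while \eqref{4.1} gives $(I-L)^{-s/2}h_\gamma=(1+|\gamma|)^{-s/2}h_\gamma$. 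Thus $\partial^\alpha$ lowers the index by $\alpha$, $\delta^\alpha$ raises it by $\alpha$, the potentials are diagonal, and any composition returning to the original index is diagonal on $\{h_\gamma\}$.

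The construction is as follows. Consider $S:=\sum_{|\alpha|\le m}\partial^\alpha\delta^\alpha (I-L)^{-m}$ on $L^2(\gamma)$. Since $\partial^\alpha\delta^\alpha h_\gamma=(\prod_j (\gamma_j+\alpha_j)!/\gamma_j!)\,h_\gamma$, the operator $S$ is diagonal, $Sh_\gamma=\lambda_\gamma h_\gamma$ with $\lambda_\gamma=(1+|\gamma|)^{-m}\sum_{|\alpha|\le m}\prod_j (\gamma_j+\alpha_j)!/\gamma_j!$. I will show $\lambda_\gamma\approx 1$ uniformly in $\gamma$; granting this, $S$ is invertible on $L^2(\gamma)$ with bounded diagonal inverse $S^{-1}$, and I define
$$g_\alpha:=\delta^\alpha (I-L)^{-m}S^{-1}g,\qquad |\alpha|\le m,$$
so that $\sum_{|\alpha|\le m}\partial^\alpha g_\alpha=SS^{-1}g=g$ by construction (the case $m=0$ being trivial, with $S=I$ and $g_0=g$).

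Two estimates remain. For the norm bound $\|g_\alpha\|_{W^{2,m}(\gamma)}\lesssim\|g\|_{L^2(\gamma)}$, Theorem \ref{Theorem 3.1} reduces matters to the boundedness of $\delta^\alpha(I-L)^{-m}\colon L^2(\gamma)\to L^{2,m}(\gamma)$; since $(I-L)^{m/2}\delta^\alpha(I-L)^{-m}$ sends $h_\gamma$ to a multiple of $h_{\gamma+\alpha}$ with factor bounded by $(1+|\gamma|)^{m/2}(1+|\gamma|)^{|\alpha|/2}(1+|\gamma|)^{-m}=(1+|\gamma|)^{(|\alpha|-m)/2}\le 1$ (using $|\alpha|\le m$), this operator is bounded, and composing with the bounded $S^{-1}$ gives the claim. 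For the eigenvalue estimate the upper bound $\lambda_\gamma\lesssim 1$ is immediate from $\prod_j(\gamma_j+\alpha_j)!/\gamma_j!\le(|\gamma|+m)^{|\alpha|}\lesssim(1+|\gamma|)^m$ together with the finiteness of the number of multi-indices $|\alpha|\le m$.

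The main obstacle is the uniform lower bound $\lambda_\gamma\gtrsim 1$. Here I would keep only the single term $\alpha=m\,e_k$ with $k$ an index achieving $\gamma_k=\max_j\gamma_j\ge|\gamma|/n$; then $\prod_j(\gamma_j+\alpha_j)!/\gamma_j!=(\gamma_k+1)\cdots(\gamma_k+m)\ge(\gamma_k+1)^m\ge n^{-m}(1+|\gamma|)^m$, whence $\lambda_\gamma\ge n^{-m}$. This is where the Gaussian weight genuinely enters: unlike the Euclidean case, $(I-L)^m$ has nonconstant (polynomial) coefficients, so one cannot simply peel off derivatives via Leibniz and integration by parts, and it is precisely the spectral bookkeeping above that must absorb the index growth. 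Equivalently, the same computation shows that the $L^2\to(W^{2,m}(\gamma))^{*}$ adjoint of the map $(g_\alpha)\mapsto\sum_\alpha\partial^\alpha g_\alpha$ is bounded below, so that map is onto with a bounded right inverse; but the explicit construction is more transparent.
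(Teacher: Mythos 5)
Your construction is correct, but it follows a genuinely different route from the paper's. The paper argues by induction on $m$: assuming the decomposition at order $k$, it writes each piece as $g_\beta=(I-L)(I-L)^{-1}g_\beta$, uses the factorization $I-L=\sum_{j=1}^{n}\partial_{x_j}(M_{x_j}-\partial_{x_j})-(n-1)I$, and then invokes two soft mapping properties --- boundedness of $(I-L)^{-1}\colon W^{2,k}(\gamma)\to W^{2,k+2}(\gamma)$ (from Theorem \ref{Theorem 3.1}) and of $M_{x_j}-\partial_{x_j}\colon W^{2,k+2}(\gamma)\to W^{2,k+1}(\gamma)$ (via \eqref{2.2} and the Gauss--Bargmann transform) --- to raise the order of the decomposition by one. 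You instead build the decomposition in one stroke: you diagonalize $S=\sum_{|\alpha|\le m}\partial^\alpha\delta^\alpha(I-L)^{-m}$ on the Hermite basis and prove the two-sided eigenvalue bound $n^{-m}\le\lambda_\gamma\lesssim 1$, the lower bound (keeping only the term $\alpha=m\,e_k$ with $\gamma_k=\max_j\gamma_j\ge|\gamma|/n$, legitimate since all terms are nonnegative) being the one genuinely new computation; then $g_\alpha=\delta^\alpha(I-L)^{-m}S^{-1}g$ does the job. Your supporting computations check out: $\delta^\alpha$ is the $L^2(\gamma)$-adjoint of $\partial^\alpha$ and raises Hermite indices with exactly the stated factor, $\partial^\alpha\delta^\alpha$ is diagonal, and $(I-L)^{m/2}\delta^\alpha(I-L)^{-m}$ sends the orthonormal basis to orthogonal vectors with uniformly bounded coefficients, which gives $\|g_\alpha\|_{W^{2,m}(\gamma)}\lesssim\|g\|_{L^2(\gamma)}$ through Theorem \ref{Theorem 3.1}. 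What the paper's induction buys is softness: no spectral estimates beyond what Theorem \ref{Theorem 3.1} already encodes. What your argument buys is an explicit, linear, bounded right inverse $g\mapsto(g_\alpha)_{|\alpha|\le m}$ with explicit constants and no induction. The only point you should make explicit for full rigor is that $S$, $S^{-1}$ and $\delta^\alpha(I-L)^{-m}$ are first defined on the dense span of the Hermite polynomials and extended by continuity, and that the identity $\sum_{|\alpha|\le m}\partial^\alpha g_\alpha=g$ is verified on that span and then extended using boundedness of $\partial^\alpha\colon W^{2,m}(\gamma)\to L^2(\gamma)$; this is routine and does not affect correctness.
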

\begin{proof}
If $m=0$, then the conclusion is true. Suppose that the conclusion is true for $m=k$, we will prove that the conclusion is true for $m=k+1$.
For any $g\in L^2(\gamma),$ we know that
$$
g=\sum_{|\beta|\leq k}\partial^{\beta}g_{\beta}
$$
where $g\in W^{2,k}(\gamma)$ and $\|g_{\beta}\|_{W^{2,k}(\gamma)}\lesssim \|g\|_{L^2(\gamma)}$. Then
$g_{\beta}=(I-L)(I-L)^{-1}g_{\beta}.$
Since
$$I-L=\sum_{j=1}^{n}\partial_{x_j}(M_{x_j}-\partial_{x_j})-(n-1)I,$$
we have
$$
g_{\beta}=\sum_{j=1}^{n}\partial_{x_j}(M_{x_j}-\partial_{x_j})(I-L)^{-1}g_{\beta}-(n-1)(I-L)^{-1}g_{\beta}.
$$
By Theorem \ref{Theorem 3.1}, we know that $(I-L)^{-1}$ is bounded from $W^{2,k}(\gamma)$ to $W^{2,k+2}(\gamma)$, then $(I-L)^{-1}g_{\beta}\in W^{2,k+2}(\gamma)$. By \eqref{2.2}, we know that $(M_{x_j}-\partial_{x_j})$
is bounded from $W^{2,k+2}(\gamma)$ to $W^{2,k+1}(\gamma)$. We then obtain
$$
g=\sum_{|\beta|\leq k}\partial^{\beta}\left[\sum_{j=1}^{n}\partial_{x_j}(M_{x_j}-\partial_{x_j})(I-L)^{-1}g_{\beta} -(n-1)(I-L)^{-1}g_{\beta}\right],
$$
where
$$
\|(M_{x_j}-\partial_{x_j})(I-L)^{-1}g_{\beta}\|_{W^{2,k+1}(\gamma)}\lesssim\|(I-L)^{-1}g_{\beta}\|_{W^{2,k+2}(\gamma)}
\lesssim\|g_{\beta}\|_{W^{2,k}(\gamma)}\lesssim \|g\|_{L^2(\gamma)},
$$
and
$$
\|(n-1)(I-L)^{-1}g_{\beta}\|_{W^{2,k+1}(\gamma)}\lesssim\|(I-L)^{-1}g_{\beta}\|_{W^{2,k+2}(\gamma)}\lesssim \|g\|_{L^2(\gamma)}.
$$
We have completed the proof.
\end{proof}

For any $b\in{\mathbb{R}^n}$, let $W_b$ be an operator on $F^2$ such that
$$W_bh(z)=h(z-b)e^{z\cdot b-\frac{b^2}{2}},$$
for any $h\in F^2$.  This operator is the analogue of translation in the Fock space setting.

\begin{lemma}\label{lemma 3.1}
For any $b\in{\mathbb{R}^n}$, $W_b$ is a bounded operator on $F^{2,m}$ and
$$
\|W_{b}\|_{F^{2,m}}\leq c_{m,n}\left(\sum_{j=0}^m|b|^{2j}\right),
$$
where $c_{m,n}$ is a constant depend only on $m$ and $n$.
\end{lemma}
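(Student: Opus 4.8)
The plan is to transfer the $F^{2,m}$-estimate to a weighted $F^{2}$-estimate by means of Theorem \ref{theorem 2.3}, and then to exploit that $W_b$ acts isometrically on $F^{2}$. Since $W_bf(z)=f(z-b)e^{z\cdot b-\frac{b^2}{2}}$ is entire whenever $f$ is, Theorem \ref{theorem 2.3} yields $\|W_bf\|_{F^{2,m}}\approx\big\||z|^m W_bf\big\|_{F^2}$ and $\|f\|_{F^{2,m}}\approx\big\||z|^m f\big\|_{F^2}$ (the finiteness of the weighted norm will simultaneously confirm $W_bf\in F^{2,m}$). Hence it suffices to establish
$$
\big\||z|^m W_bf\big\|_{F^2}\lesssim\Big(\sum_{j=0}^m|b|^{2j}\Big)\|f\|_{F^{2,m}},
$$
with an implied constant depending only on $m$ and $n$.

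First I would compute $\big\||z|^m W_bf\big\|_{F^2}^2$ directly from the definitions and then perform the change of variables $w=z-b$. Because $b\in\mathbb{R}^n$ we have $b^2=|b|^2$ and $\textnormal{Re}(z\cdot b)=(\textnormal{Re}\,z)\cdot b$, so $\big|e^{z\cdot b-\frac{b^2}{2}}\big|^2=e^{2(\textnormal{Re}\,z)\cdot b-|b|^2}$. Substituting $z=w+b$ (a real translation, with $dv(z)=dv(w)$) and expanding $|z|^2=|w+b|^2=|w|^2+2(\textnormal{Re}\,w)\cdot b+|b|^2$, the exponential factors collapse: the weight $e^{2(\textnormal{Re}\,z)\cdot b-|b|^2}e^{-|z|^2}$ becomes precisely $e^{-|w|^2}$. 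This gives
$$
\big\||z|^m W_bf\big\|_{F^2}^2=\int_{\mathbb{C}^n}|w+b|^{2m}|f(w)|^2\,d\lambda(w),
$$
which for $m=0$ recovers the isometry of $W_b$ on $F^2$.

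Next I would estimate the weight $|w+b|^{2m}$ pointwise. Using $|w+b|^2\leq 2(|w|^2+|b|^2)$ and the binomial theorem,
$$
|w+b|^{2m}\leq 2^m\big(|w|^2+|b|^2\big)^m=2^m\sum_{k=0}^m\binom{m}{k}|w|^{2k}|b|^{2(m-k)}.
$$
Inserting this into the integral identity and integrating term by term bounds $\big\||z|^m W_bf\big\|_{F^2}^2$ by a constant times $\sum_{k=0}^m|b|^{2(m-k)}\big\||w|^{k}f\big\|_{F^2}^2$. For each $0\leq k\leq m$, Theorem \ref{theorem 2.3} gives $\big\||w|^{k}f\big\|_{F^2}\lesssim\|f\|_{F^{2,k}}\leq\|f\|_{F^{2,m}}$, and therefore
$$
\big\||z|^m W_bf\big\|_{F^2}^2\lesssim\Big(\sum_{j=0}^m|b|^{2j}\Big)\|f\|_{F^{2,m}}^2.
$$
Taking square roots and using that $\sum_{j=0}^m|b|^{2j}\geq1$ (so that its square root is dominated by the sum itself) yields the required inequality, and hence the stated bound on $\|W_b\|_{F^{2,m}}$.

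The one step demanding genuine care is the change of variables: one must track the complex argument $z$ against the real shift $b$ and verify that the cross terms in the exponent cancel exactly, which is the computation underlying the unitarity of $W_b$ on $F^2$. Everything after that is binomial bookkeeping combined with the norm equivalence of Theorem \ref{theorem 2.3}; expanding $|w+b|^2$ in even powers of $|w|$ and $|b|$ is precisely what makes the powers of $|b|$ come out as $\sum_{j=0}^m|b|^{2j}$ rather than $\sum_{j=0}^m|b|^{j}$.
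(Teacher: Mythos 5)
Your proposal is correct and follows essentially the same route as the paper: reduce to the weighted norm $\big\||z|^m W_b f\big\|_{F^2}$ via Theorem \ref{theorem 2.3}, perform the real translation $w=z-b$ so the exponential weights collapse to $e^{-|w|^2}$, apply the binomial bound $|w+b|^{2m}\leq 2^m(|w|^2+|b|^2)^m$, and finish with Theorem \ref{theorem 2.3} again. Your treatment is in fact slightly more careful than the paper's (you track $\textnormal{Re}(z\cdot b)$ explicitly and justify absorbing the square root via $\sum_{j=0}^m|b|^{2j}\geq 1$), but the argument is the same.
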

\begin{proof}
For any $h\in F^{2,m}$, we have
\begin{align*}
\|W_bh\|_{F^{2,m}}&\lesssim \left\||z|^m W_bh\right\|_{F^{2}}\\
&=\left[\int_{\mathbb{C}^n}|z|^{2m}|h(z-b)|^2e^{2z\cdot b-b^2} d\lambda(z)\right]^{1/2}\\
&=\left[\int_{\mathbb{C}^n}|z+b|^{2m}|h(z)|^2d\lambda(z)\right]^{1/2}\\
&\leq\left[\int_{\mathbb{C}^n}2^{m}(|z|^2+|b|^2)^{m}|h(z)|^2d\lambda(z)\right]^{1/2}\\
&\lesssim \left(\sum_{j=0}^m|b|^{2j}\right)\max_{0\leq k\leq m}\{ \||z|^kh\|_{F^2}\}\\
&\lesssim \left(\sum_{j=0}^m|b|^{2j}\right) \|h\|_{F^{2,m}},
\end{align*}
where the last inequality is due to Theorem \ref{theorem 2.3}.
\end{proof}

\begin{lemma}\label{Lemma 4.5}
Suppose that $u\in MW^{2,m}(\gamma)$ for some $m\geq0$, let
$$u_{r}(x)=\int_{\mathbb{R}^n}r^{-n}K(r^{-1}t)u(x-t)dt,$$
where $K\in C^{\infty}_{c}(\mathbb{B}^n)$, $K\geq0$ and $0\leq r\leq1$. Then
$$
 \sup_{0<r\leq1}\|u_{r}\|_{MW^{2,m}(\gamma)}\leq c_{m,n}\|u\|_{MW^{2,m}(\gamma)}
$$
and
$$
 \sup_{0<r\leq1}\|\partial^{\alpha}u_{r}\|_{M(W^{2,m}(\gamma)\rightarrow L^{2}(\gamma))}\leq
 c'_{m,n}\|\partial^{\alpha}u\|_{M(W^{2,m}(\gamma)\rightarrow L^{2}(\gamma))}
$$
for any $\alpha$ with $|\alpha|\leq m$,
where $c_{m,n}$ and $c'_{m,n}$ are constants that depend only on $m$ and $n$.
\end{lemma}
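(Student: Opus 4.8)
The plan is to reduce both inequalities to a single uniform estimate on translates of $u$, and then to remove the translate by a ground-state (exponential weight) transform that repairs the failure of translation invariance of the Gaussian measure.

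First I would record the pointwise identity $M_{u_r}g=\int_{\mathbb{R}^n}r^{-n}K(r^{-1}t)\,(\tau_t u)\,g\,dt$, where $\tau_t u(x)=u(x-t)$, so that $M_{u_r}=\int r^{-n}K(r^{-1}t)\,M_{\tau_t u}\,dt$. Since $K$ is supported in $\mathbb{B}^n$ and $0<r\le1$, the integration runs over $|t|\le r\le1$. By Minkowski's integral inequality in $W^{2,m}(\gamma)$ (resp. $L^2(\gamma)$) it then suffices to prove $\|\tau_t u\|_{MW^{2,m}(\gamma)}\le c_{m,n}\|u\|_{MW^{2,m}(\gamma)}$ uniformly for $|t|\le1$, together with the analogue for the $M(W^{2,m}(\gamma)\to L^2(\gamma))$ norm; the stated bounds follow after pulling $\|\tau_t u\|$ out of the integral and using $\int r^{-n}K(r^{-1}t)\,dt=\int K$. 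For the second inequality I would use $\partial^\alpha u_r=(\partial^\alpha u)_r$, so the claim for $\partial^\alpha u$ has exactly the same shape with $v=\partial^\alpha u$ in place of $u$ and $L^2(\gamma)$ as target.

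Next I would change variables to identify the translated multiplier norm with a multiplier norm against a shifted Gaussian. Writing $(\tau_t u)g=\tau_t(u\cdot\tau_{-t}g)$ and substituting $y=x-t$, one gets, for each $\beta$, $\|\partial^\beta\tau_t(u\,\tau_{-t}g)\|_{L^2(\gamma)}=\|\partial^\beta(u\,\tau_{-t}g)\|_{L^2(\gamma_t)}$, where $\gamma_t:=\gamma(\cdot+t)$ is the Gaussian centered at $-t$, with density $\omega_t(x)=e^{-x\cdot t-|t|^2/2}$ relative to $\gamma$. The same substitution shows $\tau_{-t}$ is an isometry from $W^{2,m}(\gamma)$ onto $W^{2,m}(\gamma_t)$, so $\|\tau_t u\|_{MW^{2,m}(\gamma)}=\|u\|_{MW^{2,m}(\gamma_t)}$, and likewise $\|\tau_t v\|_{M(W^{2,m}(\gamma)\to L^2(\gamma))}=\|v\|_{M(W^{2,m}(\gamma_t)\to L^2(\gamma_t))}$. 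The whole problem is thus reduced to comparing Gaussian--Sobolev multiplier norms for two Gaussians whose centers differ by at most $1$.

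The heart of the argument, and the step I expect to be the main obstacle since it is exactly where translation invariance is lost, is this last comparison. I would handle it with the exponential weight $\phi_t(x)=e^{-x\cdot t/2-|t|^2/4}$, which satisfies $\phi_t^2=\omega_t$ and, crucially, $\partial^\gamma\phi_t=(-t/2)^\gamma\phi_t$. The Leibniz rule then gives $\partial^\beta(\phi_t h)=\phi_t\sum_{\gamma\le\beta}\binom{\beta}{\gamma}(-t/2)^\gamma\partial^{\beta-\gamma}h$, so the vectors $(\partial^\beta(\phi_t h))_{|\beta|\le m}$ and $(\phi_t\,\partial^\beta h)_{|\beta|\le m}$ are related pointwise by a lower-triangular matrix with unit diagonal whose off-diagonal entries are $O((|t|/2)^{|\gamma|})$; for $|t|\le1$ this matrix and its inverse have norms bounded by a constant depending only on $m$ and $n$. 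Integrating the resulting pointwise comparison against $d\gamma$ yields $\|h\|_{W^{2,m}(\gamma_t)}\approx\|\phi_t h\|_{W^{2,m}(\gamma)}$ with constants depending only on $m,n$, while $\|h\|_{L^2(\gamma_t)}=\|\phi_t h\|_{L^2(\gamma)}$ is exact. Since $h\mapsto\phi_t h$ maps $C_0^\infty(\mathbb{R}^n)$ bijectively onto itself and $\phi_t\,(uh)=u\,(\phi_t h)$, running the supremum over test functions through this substitution gives $\|u\|_{MW^{2,m}(\gamma_t)}\approx\|u\|_{MW^{2,m}(\gamma)}$ and $\|v\|_{M(W^{2,m}(\gamma_t)\to L^2(\gamma_t))}\approx\|v\|_{M(W^{2,m}(\gamma)\to L^2(\gamma))}$, uniformly for $|t|\le1$. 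Combined with the first two paragraphs this completes both estimates; the only constant other than $c_{m,n}$ that could appear is $\int K$, which I would normalize to $1$ (or absorb, as $K$ is fixed).
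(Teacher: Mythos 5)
Your proof is correct, and it shares the paper's opening reduction but handles the crux by a genuinely different route. Both arguments begin the same way: Minkowski's integral inequality reduces both estimates to bounds on the translated multipliers $M_{\tau_t u}$ (resp.\ $M_{\tau_t \partial^{\alpha}u}$, via $\partial^{\alpha}u_r=(\partial^{\alpha}u)_r$), uniform over $|t|\le 1$. At that point the paper leaves the Gaussian setting: it proves the operator identity $M_{\tau_t u}=G^{-1}W_{t/2}G\,M_u\,G^{-1}W_{-t/2}G$ and imports the uniform bound from Lemma \ref{lemma 3.1}, the boundedness of the weighted translations $W_b$ on $F^{2,m}$, which in turn rests on the norm equivalence of Theorem \ref{theorem 2.3}. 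You instead stay entirely on the real side: a change of variables identifies $\|\tau_t u\|_{MW^{2,m}(\gamma)}$ with the multiplier norm over the shifted Gaussian $\gamma_t$, and conjugation by the weight $\phi_t=e^{-x\cdot t/2-|t|^2/4}$ (whose partial derivatives are scalar multiples of itself, so the Leibniz rule gives a unit-triangular comparison with entries bounded uniformly for $|t|\le 1$) carries $W^{2,m}(\gamma_t)$ back to $W^{2,m}(\gamma)$. The two mechanisms are secretly the same object: the paper's computation $G^{-1}W_{t/2}G=M_{\exp(x\cdot t/2-t^2/4)}\tau_t$ shows that what it imports from the Fock side is exactly the weighted translation whose $W^{2,m}(\gamma)$-boundedness you verify by hand. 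What each buys: your argument is elementary and self-contained (it needs neither the Gauss--Bargmann transform, nor Lemma \ref{lemma 3.1}, nor Zhu's equivalence) and yields explicit constants; the paper's is shorter given machinery it has already built and uses elsewhere, and fits its theme of moving problems between the Gauss--Sobolev and Fock--Sobolev settings. Two shared fine points, both handled correctly by you: the constant also carries the fixed factor $\int K$, and the supremum substitution $h\mapsto \phi_t h$ is legitimate precisely because multiplication by $\phi_t$ maps $C_0^{\infty}(\mathbb{R}^n)$ onto itself.
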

\begin{proof}For any  $g\in W^{2,m}(\gamma)$, by Minkowski's inequality, we have
\begin{align*}
\|u_{r}g\|_{W^{2,m}(\gamma)}&=\sum_{|\alpha|\leq m}\|\partial^{\alpha}(u_{r}g)\|_{L^{2}(\gamma)}\\
&=\sum_{|\alpha|\leq m}\left[\int_{\mathbb{R}^n}\left| \int_{\mathbb{R}^n}r^{-n}K(r^{-1}t)\partial^{\alpha} \big(u(x-t)g(x)\big)dt\right|^2d\gamma(x)\right]^{\frac{1}{2}}\\
&\leq\sum_{|\alpha|\leq m}\int_{\mathbb{R}^n} r^{-n}K(r^{-1}t)\left[\int_{\mathbb{R}^n}|\partial^{\alpha} \big(u(x-t)g(x)\big)|^2d\gamma(x)\right]^{\frac{1}{2}}dt.
\end{align*}
Let $\tau_{t}$ be the translation operator such that $\tau_{t}u(x)=u(x-t)$ and $M_{\tau_{t}u}$ be the multiplication operator, then
\begin{align*}
\|u_{r}g\|_{W^{2,m}(\gamma)}&\leq\sum_{|\alpha|\leq m}\int_{\mathbb{R}^n} r^{-n}K(r^{-1}t)\|M_{\tau_{t}u}g\|_{W^{2,m}(\gamma)}dt\\
&\leq c_m \|g\|_{W^{2,m}(\gamma)} \int_{|t|\leq r} r^{-n}K(r^{-1}t)\|M_{\tau_{t}u}\|_{MW^{2,m}(\gamma)}dt.
\end{align*}
We claim that $M_{\tau_{t}u}=G^{-1}W_{\frac{t}{2}}G M_{u}G^{-1}W_{\frac{-t}{2}}G$, then
$$
\|M_{\tau_{t}u}\|_{MW^{2,m}(\gamma)}\leq\|W_{\frac{t}{2}}\|_{F^{2,m}} \|u\|_{MW^{2,m}(\gamma)}\|W_{\frac{-t}{2}}\|_{F^{2,m}}.
$$
By Lemma \ref{lemma 3.1}, we have
\begin{align*}
\sup_{0<r\leq1}\|u_{r}\|_{MW^{2,m}(\gamma)}&\leq \sup_{0<r\leq1}c_m\int_{|t|\leq r} r^{-n}K(r^{-1}t) \|W_{\frac{t}{2}}\|_{F^{2,m}} \|W_{\frac{-t}{2}}\|_{F^{2,m}}  dt \|u\|_{MW^{2,m}(\gamma)}\\
&\leq c_{m,n} \|u\|_{MW^{2,m}(\gamma)}
\end{align*}
for some constant $c_{m,n}$.
Next, we prove the claim $M_{\tau_{t}u}=G^{-1}W_{\frac{t}{2}}G M_{u}G^{-1}W_{\frac{-t}{2}}G$. First, we show that $G^{-1}W_{\frac{t}{2}}G=M_{\exp[x\cdot \frac{t}{2}-\frac{t^2}{4}]}\tau_{t}$. For any $g\in W^{2,m}(\gamma)$,
we have
\begin{align*}
(W_{\frac{t}{2}}Gg)(z)
&=e^{z\cdot \frac{t}{2}-\frac{t^2}{8}}\int_{\mathbb{R}^{n}}g(x) e^{ x \cdot (z-\frac{t}{2})-\frac{(z-\frac{t}{2})^{2}}{2}}\frac{1}{{(2\pi)}^{\frac{n}{2}}}e^{-\frac{|x|^2}{2}}dx\\
&=e^{z\cdot \frac{t}{2}-\frac{t^2}{8}}\int_{\mathbb{R}^{n}}g(x-t) e^{ (x-t) \cdot (z-\frac{t}{2})-\frac{(z-\frac{t}{2})^{2}}{2}}\frac{1}{{(2\pi)}^{\frac{n}{2}}}e^{-\frac{|x-t|^2}{2}}dx\\
&=e^{-\frac{t^2}{4}}\int_{\mathbb{R}^{n}}g(x-t)e^{x\cdot \frac{t}{2}} e^{ x \cdot z-\frac{z^{2}}{2}}d\gamma\\
&=e^{-\frac{t^2}{4}}G[g(x-t)e^{x\cdot \frac{t}{2}}](z).
\end{align*}
Thus, we have
$$
(G^{-1}W_{\frac{t}{2}}Gg)(x)=e^{x\cdot \frac{t}{2}-\frac{t^2}{4}}g(x-t).
$$
Direct computation shows that $$M_{\tau_{t}u}=G^{-1}W_{\frac{t}{2}}G M_{u}G^{-1}W_{\frac{-t}{2}}G,$$
which completes the proof of the claim.

Similarly, for any $\alpha$ with $|\alpha|\leq m$ and $g\in L^2(\gamma)$, we have
\begin{align*}
\|(\partial^{\alpha}u_{r})g\|_{L^{2}(\gamma)}&\leq \int_{\mathbb{R}^n} r^{-n}K(r^{-1}t)\|M_{\tau_{t}\partial^{\alpha}u}g\|_{L^{2}(\gamma)}dt\\
&\leq c_m \|g\|_{W^{2,m}(\gamma)}\int_{|t|\leq r} r^{-n}K(r^{-1}t)\|M_{\tau_{t}\partial^{\alpha}u}\|_{M\big(W^{2,m}(\gamma)\rightarrow L^2(\gamma)\big)}dt.
\end{align*}
By the argument above, for any $\alpha$ with $|\alpha|\leq m$, we have
\begin{align*}
&\|M_{\tau_{t}\partial^{\alpha}u}\|_{M\big(W^{2,m}(\gamma)\rightarrow L^2(\gamma)\big)}\\
=&\|G^{-1}W_{\frac{t}{2}}G M_{\partial^{\alpha}u}G^{-1}W_{\frac{-t}{2}}G\|_{M\big(W^{2,m}(\gamma)\rightarrow L^2(\gamma)\big)}\\
\leq&\|G^{-1}W_{\frac{t}{2}}G\|_{M\big(L^{2}(\gamma)\rightarrow L^2(\gamma)\big)} \| M_{\partial^{\alpha} u}\|_{M\big(W^{2,m}(\gamma)\rightarrow L^2(\gamma)\big)} \|G^{-1}W_{\frac{-t}{2}}G\|_{M\big(W^{2,m}(\gamma)\rightarrow W^{2,m}(\gamma)\big)}\\
\leq & c'_{m,n}\| {\partial^{\alpha} u}\|_{M\big(W^{2,m}(\gamma)\rightarrow L^2(\gamma)\big)}
\end{align*}
for some constant $c'_{m,n}$, which completes the proof.
\end{proof}
\begin{proposition}\label{proposition 4.6}
If $u \in MW^{2,m}(\gamma),$ then $\partial^{\alpha}u \in M\big(W^{2,|\alpha|}(\gamma)\rightarrow L^2(\gamma)\big)$ for any $|\alpha|= m$ and
$u\in ML^2(\gamma)$. Moreover, we have
$$
\sum_{|\alpha|= m}\|\partial^{\alpha} u\|_{M(W^{2,|\alpha|}(\gamma) \rightarrow L^{2}(\gamma))}+\| u\|_{M(L^{2}(\gamma) )} \lesssim\|u\|_{MW^{2,m}(\gamma)}.
$$
\end{proposition}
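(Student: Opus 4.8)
The plan is to prove the two assertions in the order in which they are coupled. Note first that the derivative estimate for $|\alpha|=m$ is a direct consequence of Lemma~\ref{Lemma 4.3} once the membership $u\in ML^{2}(\gamma)$ is in hand: applying that lemma with $|\alpha|=m$ gives $\partial^{\alpha}u\in M(W^{2,m}(\gamma)\to W^{2,0}(\gamma))=M(W^{2,m}(\gamma)\to L^{2}(\gamma))$ together with $\|\partial^{\alpha}u\|_{M(W^{2,m}\to L^{2})}\le\varepsilon\|u\|_{ML^{2}}+c(\varepsilon)\|u\|_{MW^{2,m}}$; feeding in the (still to be proved) bound $\|u\|_{ML^{2}}\lesssim\|u\|_{MW^{2,m}}$ and summing over the finitely many $\alpha$ with $|\alpha|=m$ then yields the stated inequality. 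Hence everything reduces to showing $u\in ML^{2}(\gamma)$ with $\|u\|_{ML^{2}(\gamma)}\lesssim\|u\|_{MW^{2,m}(\gamma)}$; recall that $ML^{2}(\gamma)$ is exactly multiplication by $L^{\infty}$ functions, with $\|u\|_{ML^{2}(\gamma)}=\|u\|_{L^{\infty}}$, so this is a boundedness statement for $u$.

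The natural first attempt is to decompose a test function by Lemma~\ref{Lemma 4.4} as $g=\sum_{|\alpha|\le m}\partial^{\alpha}g_{\alpha}$ and to estimate $\|ug\|_{L^{2}}$ term by term; however the top pieces $u\,\partial^{\alpha}g_{\alpha}$ with $|\alpha|=m$ only return $\|u\|_{ML^{2}}\|g\|_{L^{2}}$ with an $O(1)$ constant, so no absorption is possible and this route does not by itself close. Instead I would argue by concentration. Using Lemma~\ref{Lemma 4.5} I may first replace $u$ by its mollifications $u_{r}$, which are smooth and satisfy $\|u_{r}\|_{MW^{2,m}(\gamma)}\lesssim\|u\|_{MW^{2,m}(\gamma)}$ uniformly in $0<r\le1$. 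For a smooth $u_{r}$, fix a point $x_{0}$ and a fixed bump $\psi\in C_{0}^{\infty}(\mathbb{R}^{n})$, and test the multiplier inequality on the rescaled bumps $g_{s}(x)=\psi((x-x_{0})/s)$, letting the scale $s\to0$. A scaling computation shows that both $\|g_{s}\|_{W^{2,m}(\gamma)}$ and $\|u_{r}g_{s}\|_{W^{2,m}(\gamma)}$ are dominated, as $s\to0$, by their top-order ($|\delta|=m$) parts; the Gaussian density is essentially constant on the shrinking support and factors out of the quotient, while every term in which a derivative falls on $u_{r}$ carries an extra power of $s$ and hence is negligible. Consequently $\|u_{r}g_{s}\|_{W^{2,m}(\gamma)}/\|g_{s}\|_{W^{2,m}(\gamma)}\to|u_{r}(x_{0})|$, so the multiplier inequality forces $|u_{r}(x_{0})|\le\|u_{r}\|_{MW^{2,m}(\gamma)}$ for every $x_{0}$.

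Taking the supremum over $x_{0}$ gives $\|u_{r}\|_{L^{\infty}}\le\|u_{r}\|_{MW^{2,m}(\gamma)}\lesssim\|u\|_{MW^{2,m}(\gamma)}$, and letting $r\to0$ (so that $u_{r}\to u$ in $L^{2}(\gamma)$ and a.e. along a subsequence) yields $\|u\|_{ML^{2}(\gamma)}=\|u\|_{L^{\infty}}\lesssim\|u\|_{MW^{2,m}(\gamma)}$, which is the missing bound; combined with the first paragraph this completes the proof. The main obstacle is exactly this boundedness step: a multiplier of $W^{2,m}(\gamma)$ is a priori only in $W^{2,m}(\gamma)\subset L^{2}(\gamma)$ (apply $u$ to the constant function $1$, which lies in $W^{2,m}(\gamma)$), and this space need not embed into $L^{\infty}$, so the $L^{\infty}$ bound cannot come from a Sobolev embedding and must instead be extracted from the multiplier property itself. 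The concentration/scaling argument is what bridges this gap; the two points requiring care are that the non-translation-invariant Gaussian weight does not spoil the limiting quotient — handled by shrinking the scale at each fixed $x_{0}$ and by the uniform control of Lemma~\ref{Lemma 4.5}, which here plays the role of translation invariance — and the justification of the passage from the smooth mollifications back to $u$.
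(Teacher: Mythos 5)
Your proof is correct, but it reaches the crucial bound $\|u\|_{ML^{2}(\gamma)}\lesssim\|u\|_{MW^{2,m}(\gamma)}$ by a genuinely different route than the paper, and your stated reason for abandoning the paper's route is mistaken. The paper does use the decomposition $g=\sum_{|\alpha|\le m}\partial^{\alpha}g_{\alpha}$ of Lemma \ref{Lemma 4.4}, but it never estimates $\|u\,\partial^{\alpha}g_{\alpha}\|_{L^{2}(\gamma)}$ crudely by $\|u\|_{ML^{2}(\gamma)}\|\partial^{\alpha}g_{\alpha}\|_{L^{2}(\gamma)}$, as your second paragraph assumes; it first rewrites $u\,\partial^{\alpha}g_{\alpha}=\sum_{\beta\le\alpha}\frac{\alpha!}{\beta!(\alpha-\beta)!}\partial^{\beta}\bigl(g_{\alpha}(-\partial)^{\alpha-\beta}u\bigr)$, so that the only term carrying an $O(1)$ constant is the one with $\beta=\alpha$, which contributes $\|u\|_{MW^{2,m}(\gamma)}$, while every term with $\alpha-\beta\neq 0$ is controlled through Lemma \ref{Lemma 4.3} by $\varepsilon\|u\|_{ML^{2}(\gamma)}+c(\varepsilon)\|u\|_{MW^{2,m}(\gamma)}$; absorption therefore does close under the a priori assumption $u\in ML^{2}(\gamma)$, which the paper then removes by the cutoff-and-mollification argument (Lemmas \ref{lemma 3.3} and \ref{Lemma 4.5} plus a Fatou-type passage). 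Your alternative -- testing the multiplier inequality for the smooth mollification $u_{r}$ on shrinking bumps $\psi((x-x_{0})/s)$, observing that top-order derivatives dominate, that any derivative landing on $u_{r}$ gains a factor of $s$, and that the Gaussian density is asymptotically constant on the shrinking support, so that $|u_{r}(x_{0})|\le\|u_{r}\|_{MW^{2,m}(\gamma)}$ pointwise -- is a valid classical dilation argument, and the limit passage via the uniform bound of Lemma \ref{Lemma 4.5} and a.e.\ convergence of mollifications is sound; your reduction of the derivative estimates to Lemma \ref{Lemma 4.3} (with, say, $\varepsilon=1$) once $u\in ML^{2}(\gamma)$ is in hand also matches the paper's logic. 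What your route buys is the elimination of Lemma \ref{Lemma 4.4} and of the a priori finiteness/absorption step (you never need $\|u\|_{ML^{2}(\gamma)}<\infty$ before proving it); what the paper's route buys is that it stays entirely inside the multiplier calculus it has already built, with no local analysis of bumps against the Gaussian weight, at the cost of the extra decomposition lemma and the two-stage remove-the-hypothesis structure.
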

\begin{proof}
First, we suppose that $u \in ML^{2}(\gamma)$.
For any $g\in W^{2,m}(\gamma)$ and multi-index $\alpha$ with $|\alpha|= m$, we have
\begin{align*}
&\|(\partial^{\alpha}u) g\|_{L^{2}(\gamma)}\\
=&\|\partial^{\alpha}(u g)-\sum_{\beta:0\leq\beta<\alpha}\partial^{\beta}u\partial^{\alpha-\beta}g\|_{L^{2}(\gamma)}\\
\leq&\|u g\|_{W^{2,|\alpha|}(\gamma)}+\|\sum_{\beta:0\leq\beta<\alpha}\partial^{\beta}u\partial^{\alpha-\beta}g\|_{L^{2}(\gamma)}\\
\leq& \|u\|_{MW^{2,|\alpha|}(\gamma)}\|g\|_{W^{2,|\alpha|}(\gamma)}
+\sum_{\beta:0\leq\beta<\alpha}\|\partial^{\beta}u\partial^{\alpha-\beta}g\|_{L^{2}(\gamma)}\\
\leq& \|u\|_{MW^{2,|\alpha|}(\gamma)}\|g\|_{W^{2,|\alpha|}(\gamma)}
+\sum_{\beta:0\leq\beta<\alpha}\|\partial^{\beta}u\|_{M(W^{2,|\beta|}(\gamma)\rightarrow L^{2}(\gamma))}\|\partial^{\alpha-\beta}g\|_{W^{2,|\beta|}(\gamma)}\\
\leq& \left[\|u\|_{MW^{2,|\alpha|}(\gamma)}
+\sum_{\beta:0\leq\beta<\alpha}\|\partial^{\beta}u\|_{M(W^{2,|\beta|}(\gamma)\rightarrow L^{2}(\gamma))}\right]\|g\|_{W^{2,m}(\gamma)}.\\
\end{align*}
By Lemma \ref{Lemma 4.3}, for any $\epsilon>0$ there is $c(\epsilon)$ such that
$$
\|\partial^{\beta} u\|_{M(W^{2,|\beta|}(\gamma) \rightarrow L^{2}(\gamma))}
\leq \varepsilon\|u\|_{ML^{2}(\gamma)}+c(\varepsilon)\|u\|_{MW^{2,|\beta|}(\gamma)}.
$$
Further, by Theorem \ref{Theorem 4.2}, we have
$$\|u\|_{MW^{2,|\alpha|}(\gamma)}\lesssim \|u\|_{M L^{2}(\gamma)} + \|u\|_{MW^{2,m}(\gamma)}.$$
Thus, we obtain
$$
\sum_{|\alpha|= m}\|\partial^{\alpha} u\|_{M(W^{2,m}(\gamma) \rightarrow L^{2}(\gamma))}\lesssim
\|u\|_{M L^{2}(\gamma)} + \|u\|_{MW^{2,m}(\gamma)}.
$$
Next, we will prove that $\|u\|_{M L^{2}(\gamma)}\lesssim \|u\|_{MW^{2,m}(\gamma)}$, which implies the conclusion.

For any $g\in L^2(\gamma)$, we have the decomposition $g=\sum_{|\alpha|\leq m}\partial^{\alpha}g_{\alpha}$ in Lemma \ref{Lemma 4.4}. Then
\begin{align*}
\|ug\|_{L^2(\gamma)}&\leq \sum_{|\alpha|\leq m}\|u\partial^{\alpha}g_{\alpha}\|_{L^2(\gamma)}\\
&=\sum_{|\alpha|\leq m}\Big\|\sum_{\{\beta: \alpha \geq \beta \geq 0\}} \frac{\alpha !}{\beta !(\alpha-\beta) !}\partial^{\beta}(g_{\alpha}(-\partial)^{\alpha-\beta} u)\Big\|_{L^2(\gamma)}\\
&\lesssim \sum_{|\alpha|\leq m}\sum_{\{\beta: \alpha \geq \beta \geq 0\}}\big\|\ \partial^{\beta}(g_{\alpha}(-\partial)^{\alpha-\beta} u)\big\|_{L^2(\gamma)}\\
&\lesssim \sum_{|\alpha|\leq m}\sum_{\{\beta: \alpha \geq \beta \geq 0\}}\big\|\ g_{\alpha}(-\partial)^{\alpha-\beta} u\big\|_{W^{2,m-|\alpha|+|\beta|}(\gamma)}\\
&\lesssim \sum_{|\alpha|\leq m}\sum_{\{\beta: \alpha \geq \beta \geq 0\}}\big\|\partial^{\alpha-\beta} u\big\|_{M\big(W^{2,m}(\gamma)\rightarrow W^{2,m-|\alpha|+|\beta|}(\gamma)\big)}\| g_{\alpha}\|_{W^{2,m}(\gamma)}\\
&\lesssim \sum_{|\alpha|\leq m}\sum_{\{\beta: \alpha \geq \beta \geq 0\}}\big\|\partial^{\alpha-\beta} u\big\|_{M\big(W^{2,m}(\gamma)\rightarrow W^{2,m-|\alpha|+|\beta|}(\gamma)\big)}\| g\|_{L^{2}(\gamma)}.
\end{align*}
By Lemma \ref{Lemma 4.3} and the inequality above, for any $0<\epsilon<1$, there is $c(\epsilon)$ such that
$$\|u\|_{ML^{2}(\gamma)}
\lesssim \varepsilon\|u\|_{ML^{2}(\gamma)}+c(\varepsilon)\|u\|_{MW^{2,m}(\gamma)}.$$
Then, we have $\|u\|_{ML^{2}(\gamma)} \lesssim\|u\|_{MW^{2,m}(\gamma)}.$

Next, we remove the hypothesis. For any $r>0$, let $u_r$ be the function in Lemma \ref{Lemma 4.5}. Thus $u_r$ is in $C^{\infty}(\mathbb{R}^n)$.
We can choose a set of smooth function $\phi_r$ such that $\phi_r(x)=1$ when $|x|\leq\frac{1}{r}$, $\phi_r(x)=0$ when $|x|>\frac{1}{r}+1$ and
$$\sum_{|\alpha|\leq m}\sup_{x}|\partial^{\alpha}\phi_r(x)|\leq c,$$
where $c$ is independent with $r$. We know that $\phi_ru_{r}$ is bounded, thus $\phi_ru_{r}\in ML^2(\gamma)$. By the conclusion above we know that
$$\|\phi_ru_{r}\|_{ML^2(\gamma)}\leq c' \|\phi_ru_{r}\|_{MW^{2,m}(\gamma)},$$
where $c'$ is an absolute constant.
Since $\lim_{r\rightarrow0}\phi_ru_{r}=u$ almost everywhere.
Thus for any $g\in L^{2}(\gamma),$ we have
$$
\|ug\|_{L^2(\gamma)}\leq\liminf_{r\rightarrow0}\|\phi_ru_{r}g\|_{L^2(\gamma)}.
$$
Then by Lemma \ref{Lemma 4.5} and Lemma \ref{lemma 3.3}, we have
\begin{align*}
\|u\|_{L^2(\gamma)}&\leq\liminf_{r\rightarrow0}\|\phi_ru_{r}\|_{L^2(\gamma)}\lesssim \liminf_{r\rightarrow0}\|\phi_ru_{r}\|_{MW^{2,m}(\gamma)}\\
&\leq \liminf_{r\rightarrow0}\|\phi_r\|_{MW^{2,m}(\gamma)}\|u_{r}\|_{MW^{2,m}(\gamma)}\leq c_{m,n}c \|u\|_{MW^{2,m}(\gamma)}
\end{align*}
to complete the proof of the claim.
\end{proof}

To prove our main theorem in the next section, we need the following theorem about multipliers in the Gauss-Sobolev space.
\begin{theorem}\label{theorem 3.6}
$u\in MW^{2,m}(\gamma)$ if and only if $\partial^{\alpha}u \in M\big(W^{2,|\alpha|}(\gamma)\rightarrow L^2(\gamma)\big)$ for any $|\alpha|= m$ and
$u\in ML^2(\gamma)$. In this case, we have
$$
\|u\|_{MW^{2,m}(\gamma)}\simeq \sum_{|\alpha|=m}\|\partial^{\alpha} u\|_{M(W^{2,|\alpha|}(\gamma) \rightarrow L^{2}(\gamma))}+\|u\|_{ML^2(\gamma)}.
$$
\end{theorem}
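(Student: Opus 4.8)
One direction of the equivalence, together with the bound $A+B\lesssim\|u\|_{MW^{2,m}(\gamma)}$, is exactly Proposition~\ref{proposition 4.6}, so the entire task is to prove the reverse estimate $\|u\|_{MW^{2,m}(\gamma)}\lesssim A+B$, where for brevity I write $A=\sum_{|\alpha|=m}\|\partial^\alpha u\|_{M(W^{2,m}(\gamma)\to L^2(\gamma))}$ and $B=\|u\|_{ML^2(\gamma)}$. The plan is to prove this first for sufficiently regular $u$, for which $\|u\|_{MW^{2,m}(\gamma)}$ is known a priori to be finite, and then to remove the regularity hypothesis by the mollification scheme of Lemma~\ref{Lemma 4.5}, exactly as in the last step of the proof of Proposition~\ref{proposition 4.6}.

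The first step is a Leibniz reduction. For $g\in W^{2,m}(\gamma)$ the product rule gives $\partial^\alpha(ug)=\sum_{\beta\le\alpha}\binom{\alpha}{\beta}(\partial^\beta u)(\partial^{\alpha-\beta}g)$, and since $|\alpha-\beta|+|\beta|=|\alpha|\le m$ we have $\|\partial^{\alpha-\beta}g\|_{W^{2,|\beta|}(\gamma)}\le\|g\|_{W^{2,m}(\gamma)}$. Estimating each factor by the relevant multiplier norm and summing over $|\alpha|\le m$ yields
\[
\|u\|_{MW^{2,m}(\gamma)}\lesssim\sum_{|\beta|\le m}\|\partial^\beta u\|_{M(W^{2,|\beta|}(\gamma)\to L^2(\gamma))}.
\]
The terms with $|\beta|=m$ are bounded by $A$ and the term $\beta=0$ equals $B$, so the whole problem is reduced to controlling the intermediate orders $0<|\beta|<m$.

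For such $\beta$ I would use the same integration-by-parts identity already exploited in Lemma~\ref{Lemma 4.3}, namely $g\,\partial^\beta u=\sum_{0\le\gamma\le\beta}\tfrac{\beta!}{\gamma!(\beta-\gamma)!}\,\partial^\gamma\!\big(u(-\partial)^{\beta-\gamma}g\big)$, which after taking $L^2(\gamma)$ norms and arguing as above gives $\|\partial^\beta u\|_{M(W^{2,|\beta|}(\gamma)\to L^2(\gamma))}\lesssim\sum_{\gamma\le\beta}\|u\|_{MW^{2,|\gamma|}(\gamma)}$. Since every index here satisfies $|\gamma|\le|\beta|<m$, the interpolation Theorem~\ref{Theorem 4.2} (via Theorem~\ref{Theorem 3.1}) gives $\|u\|_{MW^{2,|\gamma|}(\gamma)}\lesssim\|u\|_{MW^{2,m}(\gamma)}^{|\gamma|/m}B^{\,1-|\gamma|/m}$, and Young's inequality converts this into $\varepsilon\,\|u\|_{MW^{2,m}(\gamma)}+c(\varepsilon)B$ for any $\varepsilon>0$. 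Inserting these intermediate bounds into the displayed reduction gives
\[
\|u\|_{MW^{2,m}(\gamma)}\le C\big(A+B\big)+CN\varepsilon\,\|u\|_{MW^{2,m}(\gamma)}+CNc(\varepsilon)B,
\]
where $N=N(m,n)$ counts the intermediate multi-indices. Choosing $\varepsilon$ so small that $CN\varepsilon\le\tfrac12$, the term $\tfrac12\|u\|_{MW^{2,m}(\gamma)}$ can be absorbed into the left-hand side, which yields $\|u\|_{MW^{2,m}(\gamma)}\lesssim A+B$. I expect this absorption to be the main obstacle: it is legitimate only because $\|u\|_{MW^{2,m}(\gamma)}$ is finite to begin with, which is precisely why the argument must first be carried out for regular $u$ rather than directly for the given multiplier.

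Finally, to pass to a general $u$ with $A+B<\infty$, I would apply the previous estimate to the smooth, compactly supported functions $\phi_r u_r$, where $u_r$ is the mollification of Lemma~\ref{Lemma 4.5} and $\phi_r$ is a cutoff as in the proof of Proposition~\ref{proposition 4.6}; Lemma~\ref{lemma 3.3} guarantees $\|\phi_r u_r\|_{MW^{2,m}(\gamma)}<\infty$ a priori. Both parts of Lemma~\ref{Lemma 4.5}, together with the product rule applied to $\partial^\alpha(\phi_r u_r)$ and Lemma~\ref{lemma 3.3} applied to $\phi_r$, bound the quantities $A$ and $B$ associated with $\phi_r u_r$ by $A+B$ uniformly in $r$. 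Letting $r\to0$ and using the lower semicontinuity of the multiplier norm (as $\phi_r u_r\to u$ almost everywhere) then gives $\|u\|_{MW^{2,m}(\gamma)}\lesssim A+B$, which completes the equivalence.
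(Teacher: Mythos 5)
Your overall strategy coincides with the paper's: a Leibniz reduction to multiplier norms of $\partial^\beta u$, control of the intermediate orders $0<|\beta|<m$ via the integration-by-parts identity of Lemma~\ref{Lemma 4.3} together with interpolation (Theorem~\ref{Theorem 4.2}, through Theorem~\ref{Theorem 3.1}) and Young's inequality, absorption of the $\varepsilon$-term using a priori finiteness for regular $u$, and finally mollification plus lower semicontinuity. Those first three steps are correct and are exactly the paper's argument.

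The gap is in your last step, where you insert the cutoff $\phi_r$ and apply the regular-case estimate to $\phi_r u_r$. To control the quantity $A(\phi_r u_r)$ you need uniform bounds on $\|\partial^\alpha(\phi_r u_r)\|_{M(W^{2,m}(\gamma)\to L^2(\gamma))}$ for $|\alpha|=m$, and the product rule produces cross terms $\partial^\beta\phi_r\,\partial^{\alpha-\beta}u_r$ with $0<\beta\le\alpha$, hence with $0\le |\alpha-\beta|<m$. These involve multiplier norms of \emph{intermediate} derivatives of $u_r$, which are not among the data $A+B$: Lemma~\ref{Lemma 4.5} only transfers $\partial^\gamma u_r$ to $\partial^\gamma u$, and bounding intermediate derivative norms of $u$ by $A+B$ is essentially the theorem you are trying to prove (via Lemma~\ref{Lemma 4.3}, which presupposes $u\in MW^{2,m}(\gamma)$). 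Nor do pointwise bounds help, since $\|\partial^{\alpha-\beta}u_r\|_{\infty}$ can grow like $r^{-|\alpha-\beta|}\|u\|_{\infty}$ as $r\to 0$. So your claim that the product rule plus Lemma~\ref{lemma 3.3} gives $A(\phi_r u_r)\lesssim A+B$ uniformly in $r$ is unjustified as stated. The repair is simply to drop the cutoff, which is what the paper does: since the hypothesis includes $u\in ML^2(\gamma)=L^{\infty}$, every derivative of $u_r$ is bounded for each fixed $r$ (the derivatives fall on the mollifying kernel), so Lemma~\ref{lemma 3.3} already gives the a priori finiteness $\|u_r\|_{MW^{2,m}(\gamma)}<\infty$; both parts of Lemma~\ref{Lemma 4.5} then give $A(u_r)+B(u_r)\lesssim A+B$ uniformly in $r$, and letting $r\to 0$ finishes the proof. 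The cutoff $\phi_r$ is needed only in the last step of Proposition~\ref{proposition 4.6}, where $u$ is not known to be bounded; here boundedness is part of the hypothesis, so introducing $\phi_r$ creates exactly the difficulty it was meant to avoid.
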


\begin{proof}[Proof of Theorem \ref{theorem 3.6}]
If $\partial^{\alpha}u \in M\big(W^{2,|\alpha|}(\gamma)\rightarrow L^2(\gamma)\big)$ for any $|\alpha|= m$ and
$u\in ML^2(\gamma)$.
Let $u_r$ be the function corresponding to $u$ as in Lemma \ref{Lemma 4.5}. Since $u\in ML^2(\gamma)$, we know that $u$ is bounded.
It is easy to prove that
$$\sum_{|\alpha|\leq m}\sup_{x}|\partial^{\alpha}u_r(x)|<\infty$$
for any $r>0$, thus $\|u_r\|_{MW^{2,m}(\gamma)}<\infty.$ Then for any $g\in W^{2,m}(\gamma)$, we have
\begin{align*}
&\|u_{r}g\|_{W^{2,m}(\gamma)}\\
=&\sum_{|\alpha|\leq m}\|\partial^{\alpha}(u_{r}g)\|_{L^2(\gamma)}\\
\leq&\sum_{|\alpha|\leq m}\sum_{0\leq\beta\leq\alpha}\|\partial^{\beta}u_{r} \partial^{\alpha-\beta}g\|_{L^2(\gamma)}\\
=&\sum_{|\alpha|\leq m}\sum_{0\leq\beta\leq\alpha}\|\partial^{\beta}u_{r}\|_{M\big( W^{2,|\beta|}(\gamma)\rightarrow L^2(\gamma)\big)} \|\partial^{\alpha-\beta}g\|_{W^{2,|\beta|}(\gamma)}\\
\lesssim&\left[\sum_{0\leq|\beta|< m}\|\partial^{\beta}u_{r}\|_{M\big( W^{2,|\beta|}(\gamma)\rightarrow L^2(\gamma)\big)}+\sum_{|\beta|= m}\|\partial^{\beta}u_{r}\|_{M\big( W^{2,|\beta|}(\gamma)\rightarrow L^2(\gamma)\big)} \right]\|g\|_{W^{2,m}(\gamma)}.
\end{align*}
By Lemma \ref{Lemma 4.3} and Theorem \ref{Theorem 4.2}, for any $\epsilon>0$, there is a $c(\epsilon)$ such that
\begin{align*}
&\sum_{0\leq|\beta|< m}\|\partial^{\beta} u_r\|_{M(W^{2,|\beta|}(\gamma)\rightarrow L^{2}(\gamma))}\\
\lesssim &\sum_{\{\beta: 0\leq|\beta|<m\}}\|u_r \|_{MW^{2,|\beta|}(\gamma)}\\
\lesssim &\epsilon\|u_r\|_{MW^{2,m}(\gamma)}+c(\epsilon)\|u_r\|_{ML^{2}(\gamma)}.
\end{align*}
Then we obtain
$$
\|u_{r}\|_{MW^{2,m}(\gamma)}\lesssim\epsilon\|u_r\|_{MW^{2,m}(\gamma)}+c(\epsilon)\|u_r\|_{ML^{2}(\gamma)}+\sum_{|\beta|= m}\|\partial^{\beta}u_{r}\|_{M\big( W^{2,|\beta|}(\gamma)\rightarrow L^2(\gamma)\big)}.
$$
Let $\epsilon$ be small enough, then we get
$$
\|u_{r}\|_{MW^{2,m}(\gamma)}\lesssim\|u_r\|_{ML^{2}(\gamma)}+\sum_{|\beta|= m}\|\partial^{\beta}u_{r}\|_{M\big( W^{2,|\beta|}(\gamma)\rightarrow L^2(\gamma)\big)}.
$$
By Lemma \ref{Lemma 4.5}, we have
\begin{align*}
\|u\|_{MW^{2,m}(\gamma)}&\leq \liminf_{r\rightarrow 0}\|u_r\|_{MW^{2,m}(\gamma)} \\
&\lesssim  \liminf_{r\rightarrow 0}\|u_r\|_{ML^{2}(\gamma)}+\liminf_{r\rightarrow 0}\sum_{|\beta|= m}\|\partial^{\beta}u_{r}\|_{M\big( W^{2,|\beta|}(\gamma)\rightarrow L^2(\gamma)\big)}\\
&\lesssim  \|u\|_{ML^{2}(\gamma)}+\sum_{|\beta|= m}\|\partial^{\beta}u\|_{M\big( W^{2,|\beta|}(\gamma)\rightarrow L^2(\gamma)\big)}.
\end{align*}
The converse is due to Proposition \ref{proposition 4.6}.
\end{proof}

\section{Applications to Certain Operators on the Fock-Sobolev Space}

In this section, we study the boundedness of $S_{\varphi}$. We need several lemmas.
Let $C_i$ and $C_{-i}$ be composition operators on $F^2$ such that for any $f\in F^2$
$$
C_{i}f(z)=f(iz)\quad\text{and}\quad C_{-i}f(z)=f(-iz).
$$
It is easy to show that $C_i$ and $C_{-i}$ are isometries on $F^{2,m}$ for any $m\in\mathbb{N}$.
\begin{lemma}\label{lemma 3.2}
For any $a\in\mathbb{R}^n$, let $M_{e^{ia\cdot x}}$ be the multiplication operator on $W^{2,m}(\gamma)$. If $S_{\varphi}$ is bounded on $F^{2,m}$, then $G^{-1}C_{-i}S_{\varphi}C_{i}G$ commutes with $M_{e^{-ia\cdot x}}$.
\end{lemma}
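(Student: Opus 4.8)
The plan is to transport the identity to the Fock side through the isometry $G$ and reduce it to a commutation relation between $S_\varphi$ and the displacement operator $W_a$. Write $M=M_{e^{-ia\cdot x}}$ and $T=G^{-1}C_{-i}S_\varphi C_iG$. All operators in sight are bounded: $G$ is an isometry, $C_i,C_{-i}$ are isometries on $F^{2,m}$, $W_a$ is bounded on $F^{2,m}$ by Lemma \ref{lemma 3.1}, and $M$ is bounded on $W^{2,m}(\gamma)$ by Lemma \ref{lemma 3.3}, since the derivatives of $e^{-ia\cdot x}$ are bounded. Hence it suffices to verify $TM=MT$ on the dense span of $\{h_\beta\}$, and conjugating by $G$ this is equivalent to the assertion that $C_{-i}S_\varphi C_i$ commutes with $GMG^{-1}$.

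The first step is to identify $GMG^{-1}$. Using $Gg(z)=\int_{\mathbb{R}^n}g(x)e^{x\cdot z-\frac{z^2}{2}}\,d\gamma(x)$ and completing the square inside the exponential, a direct computation gives
\[
G\big[e^{-ia\cdot x}g\big](z)=e^{-ia\cdot z-\frac{a^2}{2}}\,Gg(z-ia),
\]
so $GMG^{-1}$ is the operator $h(z)\mapsto e^{-ia\cdot z-\frac{a^2}{2}}h(z-ia)$. On the other hand, applying the definitions of $C_i$, $W_a$, and $C_{-i}$ in succession yields, for $a\in\mathbb{R}^n$,
\[
C_{-i}W_aC_i\,h(z)=e^{-ia\cdot z-\frac{a^2}{2}}h(z-ia).
\]
Therefore $GMG^{-1}=C_{-i}W_aC_i$.

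With this identification and the relation $C_iC_{-i}=I$, we have
\begin{align*}
\big(C_{-i}S_\varphi C_i\big)\big(C_{-i}W_aC_i\big)&=C_{-i}S_\varphi W_aC_i,\\
\big(C_{-i}W_aC_i\big)\big(C_{-i}S_\varphi C_i\big)&=C_{-i}W_aS_\varphi C_i.
\end{align*}
Thus $TM=MT$ is equivalent to $C_{-i}(S_\varphi W_a-W_aS_\varphi)C_i=0$, i.e. to $S_\varphi W_a=W_aS_\varphi$. The entire lemma therefore reduces to the statement that $S_\varphi$ commutes with the displacement operator $W_a$ for real $a$.

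Finally, I would verify $S_\varphi W_a=W_aS_\varphi$ by a direct computation on the span of $\{e_\beta\}$, where the integral defining $S_\varphi$ converges absolutely. On the one hand $W_aS_\varphi f(z)=e^{z\cdot a-\frac{a^2}{2}}(S_\varphi f)(z-a)$; on the other hand $S_\varphi W_af(z)=\int_{\mathbb{C}^n}f(w-a)e^{w\cdot a-\frac{a^2}{2}}e^{z\cdot\overline w}\varphi(z-\overline w)\,d\lambda(w)$, in which I substitute $w=v+a$ and track how the Gaussian weight transforms, namely $d\lambda(w)=e^{-2\mathrm{Re}(v\cdot a)-a^2}\,d\lambda(v)$. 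The crucial point is that $a$ is real, so $\overline a=a$ and the combination $e^{v\cdot a-2\mathrm{Re}(v\cdot a)}$ collapses to $e^{-\overline v\cdot a}$; after collecting all exponentials both sides become $e^{z\cdot a-\frac{a^2}{2}}\int_{\mathbb{C}^n}f(v)e^{(z-a)\cdot\overline v}\varphi\big((z-a)-\overline v\big)\,d\lambda(v)$, which proves the identity, and it then extends to all of $F^{2,m}$ by boundedness. The main obstacle is exactly this bookkeeping of exponential factors under the shifted Gaussian measure, and it is here that the reality of $a$ is indispensable: for a complex displacement $b$ the term $\overline b$ would not cancel and the two sides would fail to agree.
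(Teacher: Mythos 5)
Your proof is correct, and its skeleton coincides with the paper's: both arguments hinge on the identification $GM_{e^{-ia\cdot x}}G^{-1}=C_{-i}W_aC_i$ (your computation of $G\bigl[e^{-ia\cdot x}g\bigr](z)=e^{-ia\cdot z-\frac{a^2}{2}}Gg(z-ia)$ is exactly the paper's computation of $M_{e^{-ia\cdot x}}G^{-1}$, read in the opposite direction), after which the lemma reduces to the commutation relation $S_\varphi W_a=W_aS_\varphi$. The one genuine difference is how that commutation is established: the paper simply cites \cite[Lemma 3.3]{wick2019} for the identity on $F^2$ and then lifts it to $F^{2,m}$ using the boundedness of $W_a$ on $F^{2,m}$ (Lemma \ref{lemma 3.1}), whereas you prove it from scratch by the change of variables $w=v+a$ in the defining integral, tracking the transformation $d\lambda(w)=e^{-2\mathrm{Re}(v\cdot a)-a^2}d\lambda(v)$ and using $\overline a=a$ to collapse $e^{v\cdot a-2\mathrm{Re}(v\cdot a)}$ to $e^{-\overline v\cdot a}$; your bookkeeping here is right, and your observation that reality of $a$ is what makes the exponentials match is exactly the correct mechanism. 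What your route buys is self-containedness (no appeal to the external reference) together with a careful treatment of the analytic details the paper glosses over: absolute convergence of the integral on the span of $\{e_\beta\}$ and the extension to $F^{2,m}$ by density and boundedness. What the paper's route buys is brevity, and a cleaner logical separation in which the $F^2$-theory of $S_\varphi$ is treated as a black box.
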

\begin{proof}
By \cite[Lemma 3.3]{wick2019}, we know that $S_{\varphi}$ commutes with $W_a$ on $F^2$. Since $W_a$ is bounded on $F^{2,m}$,
we know that $S_{\varphi}$ commutes with $W_a$ on $F^{2,m}$.
Then $G^{-1}C_{-i}S_{\varphi}C_{i}G$ commutes with $G^{-1}C_{-i} W_a C_{i}G$. We only need to show that
$$G^{-1}C_{-i} W_a C_{i}G=M_{e^{-ia\cdot x}}.$$
For any $f\in F^{2,m}$ and $z\in \mathbb{C}^n$, we have
$$
C_{-i} W_a C_{i}f(z)=f(z-ia)e^{-i z\cdot a -\frac{a^2}{2}}.
$$
On the other hand
\begin{align*}
M_{e^{-ia\cdot x}}G^{-1}f(x)&=e^{-ia\cdot x}\int_{\mathbb{C}^{n}}f(z) e^{ x \cdot \overline{z}-\frac{\overline{z}^{2}}{2}} d\lambda(z)\\
&=e^{-ia\cdot x}\int_{\mathbb{C}^{n}}f(z) e^{ x \cdot \overline{z}-\frac{\overline{z}^{2}}{2}} \pi^{-n} e^{-|z|^{2}} d v(z)\\
&=e^{-ia\cdot x}\int_{\mathbb{C}^{n}}f(z-ia) e^{ x \cdot \overline{(z-ia)}-\frac{\overline{(z-ia)}^{2}}{2}} \pi^{-n} e^{-|z-ia|^{2}} d v(z)\\
&=\int_{\mathbb{C}^{n}}f(z-ia)e^{-iz\cdot a-\frac{a^2}{2}} e^{ x \cdot \overline{z}-\frac{\overline{z}^{2}}{2}} d \lambda(z)\\
&=G^{-1}[f(z-ia)e^{-iz\cdot a-\frac{a^2}{2}}](x).
\end{align*}
Then
$$
GM_{e^{-ia\cdot x}}G^{-1}f(z)=g(z-ia)e^{-iz\cdot a-\frac{a^2}{2}}=C_{-i} W_a C_{i}f(z),
$$
which completes the proof.
\end{proof}

Let $C_{p}^{\infty}(\mathbb{R}^n)$ denote the set of smooth function $f$ such that there is a positive number $N=N_f$, such that
$$
f(x+2Ny)=f(x)
$$
for any $x\in[-N,N]^n$ and $y\in \mathbb{Z}^n$, moreover, $f(x)=0$ when $x\in [-N,N]^n \setminus \left[-\frac{N}{2\sqrt{n}},\frac{N}{2\sqrt{n}}\right]^n$.
We call $N_f$ the period of $f$.

\begin{lemma}\label{lemma 3.4}
For any $f\in C_{p}^{\infty}(\mathbb{R}^n)$, there is a sequence $f_n\in \operatorname{span}\{e^{ia\cdot x}: a\in \mathbb{R}^n\}$ such that
$$\lim_{n\rightarrow\infty}\|M_{f_n}-M_f\|_{MW^{2,m}(\gamma)}=0.$$
\end{lemma}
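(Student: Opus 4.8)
The plan is to realize $f$ as a smooth function on a torus, expand it in a Fourier series, and take the partial sums as the approximating sequence. First I would fix $f\in C_p^\infty(\mathbb{R}^n)$ with period $N=N_f$, so that $f(x+2Ny)=f(x)$ for all $x$ and all $y\in\mathbb{Z}^n$; thus $f$ is $2N$-periodic in each coordinate. The vanishing condition in the definition of $C_p^\infty(\mathbb{R}^n)$ --- that $f\equiv0$ on $[-N,N]^n\setminus[-\frac{N}{2\sqrt n},\frac{N}{2\sqrt n}]^n$ --- is exactly what guarantees that the periodic extension of $f$ is smooth across the boundaries of the period cells, so that $f$ is a genuinely smooth function on the torus $\mathbb{R}^n/(2N\mathbb{Z})^n$. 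Hence $f$ has a Fourier expansion $f(x)=\sum_{k\in\mathbb{Z}^n}c_k\,e^{i\frac{\pi}{N}k\cdot x}$ whose coefficients, by repeated integration by parts, satisfy $|c_k|\leq C_s(1+|k|)^{-s}$ for every $s>0$.

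Next I would set $f_j(x)=\sum_{|k|\leq j}c_k\,e^{i\frac{\pi}{N}k\cdot x}$. Each exponential $e^{i\frac{\pi}{N}k\cdot x}$ has the form $e^{ia\cdot x}$ with $a=\frac{\pi}{N}k\in\mathbb{R}^n$, so $f_j\in\operatorname{span}\{e^{ia\cdot x}:a\in\mathbb{R}^n\}$, and $\{f_j\}_{j\geq1}$ is a sequence of the type required in the statement. Since $f$ and each $f_j$ are smooth and $2N$-periodic, so is the difference $f_j-f$, and in particular it is bounded together with all its derivatives. I would then invoke Lemma \ref{lemma 3.3}, applied to $u=f_j-f$, to get
$$\|M_{f_j}-M_f\|_{MW^{2,m}(\gamma)}=\|M_{f_j-f}\|_{MW^{2,m}(\gamma)}\lesssim\sum_{|\alpha|\leq m}\sup_x\big|\partial^\alpha(f_j-f)(x)\big|,$$
which converts the problem into a question of $C^m$ convergence of the Fourier partial sums.

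Finally I would estimate the right-hand side term by term: differentiating the Fourier series gives
$$\sup_x\big|\partial^\alpha(f_j-f)(x)\big|\leq\Big(\frac{\pi}{N}\Big)^{|\alpha|}\sum_{|k|>j}|k|^{|\alpha|}\,|c_k|,$$
and choosing $s>|\alpha|+n$ in the decay bound for $c_k$ makes $\sum_k|k|^{|\alpha|}|c_k|$ absolutely convergent, so its tail over $|k|>j$ vanishes as $j\to\infty$. As there are only finitely many $\alpha$ with $|\alpha|\leq m$, the full sum $\sum_{|\alpha|\leq m}\sup_x|\partial^\alpha(f_j-f)(x)|$ tends to $0$, giving $\|M_{f_j}-M_f\|_{MW^{2,m}(\gamma)}\to0$. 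I expect the one genuinely delicate point to be the very first step: one must use the vanishing condition to know that $f$ is smooth on the torus, since only then do the Fourier coefficients decay faster than any polynomial and the partial sums converge in $C^m$; a merely continuous or piecewise-smooth periodic function would not suffice.
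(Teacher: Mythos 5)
Your proof is correct and takes essentially the same route as the paper: both reduce the multiplier-norm convergence to uniform convergence of all derivatives up to order $m$ via Lemma \ref{lemma 3.3}, and both obtain that convergence from trigonometric approximation of the smooth periodic function $f$. The only difference is that the paper simply cites \cite[Chapter 7]{stein1971} for the existence of trigonometric polynomials converging to $f$ in $C^m$, whereas you construct them explicitly as Fourier partial sums on the torus and verify the convergence through the rapid decay of the Fourier coefficients.
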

\begin{proof}By \cite[Theorem 2.11 and Corollary 1.9, Chapter 7]{stein1971}, there is a sequence of functions $\{f_n\}\subset \operatorname{span}\{e^{ia\cdot x}: a\in \mathbb{R}^n\} $
such that
$$
\lim_{n\rightarrow\infty}\sup_{x}|\partial^{\alpha}f(x)-\partial^{\alpha}f_n(x)|=0,
$$
for any $\alpha\in \mathbb{R}^n$ with $|\alpha|\leq m$. By Lemma \ref{lemma 3.3}, we obtain the conclusion.
\end{proof}

\begin{lemma}\label{lemma 3.5}
$C_{p}^{\infty}(\mathbb{R}^n)$ is a dense subset of $W^{2,m}(\gamma)$.
\end{lemma}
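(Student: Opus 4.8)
The plan is to exploit the definition directly: $W^{2,m}(\gamma)$ is by construction the completion of $C_0^\infty(\mathbb{R}^n)$, so $C_0^\infty(\mathbb{R}^n)$ is dense, and it suffices to approximate an arbitrary $g\in C_0^\infty(\mathbb{R}^n)$ by elements of $C_p^\infty(\mathbb{R}^n)$ in the $W^{2,m}(\gamma)$-norm. Fix such a $g$ and suppose $\operatorname{supp} g\subset B(0,R)$. For each $N\geq 2\sqrt{n}\,R$ we have $B(0,R)\subset[-\frac{N}{2\sqrt{n}},\frac{N}{2\sqrt{n}}]^n$, and I would define the $2N$-periodization
$$
f_N(x)=\sum_{y\in\mathbb{Z}^n}g(x-2Ny).
$$
This sum is locally finite, so $f_N\in C^\infty(\mathbb{R}^n)$ and is $2N$-periodic in each coordinate.

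Next I would verify that $f_N\in C_p^\infty(\mathbb{R}^n)$ with period $N_{f_N}=N$. Periodicity holds by construction. For the support condition I argue coordinate-wise: if $x\in[-N,N]^n$ and $y\neq0$, then some $y_j\neq0$ forces $|x_j-2Ny_j|\geq 2N|y_j|-|x_j|\geq 2N-N=N>\frac{N}{2\sqrt{n}}$, so $x-2Ny\notin\operatorname{supp}g$ and hence $g(x-2Ny)=0$. Thus $f_N=g$ on $[-N,N]^n$, and since $\operatorname{supp}g\subset[-\frac{N}{2\sqrt{n}},\frac{N}{2\sqrt{n}}]^n$, the function $f_N$ vanishes on $[-N,N]^n\setminus[-\frac{N}{2\sqrt{n}},\frac{N}{2\sqrt{n}}]^n$, as required.

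Then I estimate the error $f_N-g=\sum_{y\neq0}g(\cdot-2Ny)$. For $N>R$ the translates have pairwise disjoint supports, so for each $\alpha$ with $|\alpha|\leq m$ the cross terms vanish and, after the change of variables $u=x-2Ny$,
$$
\big\|\partial^\alpha(f_N-g)\big\|_{L^2(\gamma)}^2
=\frac{1}{(2\pi)^{n/2}}\sum_{y\neq0}\int_{B(0,R)}\big|\partial^\alpha g(u)\big|^2 e^{-|u+2Ny|^2/2}\,du.
$$
Bounding $|u+2Ny|\geq 2N|y|-R$ on $B(0,R)$ yields the Gaussian decay $e^{-(2N|y|-R)^2/2}$, so that $\|\partial^\alpha(f_N-g)\|_{L^2(\gamma)}^2\leq C_g\sum_{y\neq0}e^{-(2N|y|-R)^2/2}$, where $C_g$ is independent of $N$. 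For $N$ large the exponent dominates $N^2|y|^2/2$, so the sum is controlled by a convergent series tending to $0$ as $N\to\infty$. Summing the finitely many estimates over $|\alpha|\leq m$ gives $\|f_N-g\|_{W^{2,m}(\gamma)}\to0$, which proves the density.

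The main obstacle is the Gaussian decay estimate: the nonzero periodic copies of $g$ are pushed out to distance $\geq 2N-R$ from the origin, where the weight $e^{-|x|^2/2}$ is exponentially small, and one must make this uniform across all derivatives $\partial^\alpha$ with $|\alpha|\leq m$; the coordinate-wise vanishing argument guaranteeing $f_N\in C_p^\infty(\mathbb{R}^n)$ is the other point that requires care.
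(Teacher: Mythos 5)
Your density argument is correct and is essentially the paper's own proof: both approximate a compactly supported smooth $g$ by its $2N$-periodization and control the error through the Gaussian decay of the weight on the distant translates. Your bound $e^{-(2N|y|-R)^2/2}$ and the paper's bound $e^{-|x|^2/2-N^2|y|^2}$ (obtained from $|x|\leq N|y|/2$ on the support) are the same estimate in slightly different clothing, and letting $N\to\infty$ for fixed $g$ is equivalent to the paper's choice of $N$ for a given $\epsilon$. Your explicit verification that the periodization satisfies the support and periodicity conditions defining $C_{p}^{\infty}(\mathbb{R}^n)$, and the disjoint-supports observation that makes the error computation an exact equality, are if anything more careful than the paper's write-up.

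There is, however, one omission: the lemma asserts that $C_{p}^{\infty}(\mathbb{R}^n)$ is a dense \emph{subset} of $W^{2,m}(\gamma)$, and your proposal never shows the subset half, namely that an arbitrary $f\in C_{p}^{\infty}(\mathbb{R}^n)$ --- not just the particular periodizations $f_N$ you construct --- belongs to $W^{2,m}(\gamma)$. The paper devotes the entire first half of its proof to this, decomposing $\int_{\mathbb{R}^n}|\partial^{\alpha}f|^2\,d\gamma$ over the period cells $[-N,N]^n+2Ny$ and summing the resulting series via the same Gaussian decay. The gap is genuine but easily filled with tools already in your proposal: either note that any $f\in C_{p}^{\infty}(\mathbb{R}^n)$ is smooth and periodic, hence bounded together with all its derivatives, so $\|f\|_{W^{2,m}(\gamma)}<\infty$ simply because $\gamma$ is a probability measure; or note that any $f\in C_{p}^{\infty}(\mathbb{R}^n)$ with period $N$ is itself the $2N$-periodization of $g=f|_{[-N,N]^n}$ (which lies in $C_{0}^{\infty}(\mathbb{R}^n)$ precisely because of the support condition in the definition), so that your error estimate, run with this fixed $N$ rather than with $N\to\infty$, already yields $\|f-g\|_{W^{2,m}(\gamma)}<\infty$ and hence $f\in W^{2,m}(\gamma)$. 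With either one-line addition your argument proves the full statement.
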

\begin{proof}First, we show that $C_{p}^{\infty}(\mathbb{R}^n)$ is contained in $W^{2,m}(\gamma)$.
For any $f\in C_{p}^{\infty}(\mathbb{R}^n)$ and any $\alpha\in \mathbb{N}^n$, let $N=N_f$ be the period of $f$, we have
\begin{align*}
&\int_{\mathbb{R}^n}|\partial^{\alpha}f(x)|^2d\gamma(x)\\
=&\sum_{y\in\mathbb{Z}^n}\int_{[-N,N]^n+2Ny}|\partial^{\alpha}f(x)|^2d\gamma(x)\\
=&\sum_{y\in\mathbb{Z}^n}\int_{[-N,N]^n}|\partial^{\alpha}f(x)|^2\frac{1}{{(2\pi)}^{\frac{n}{2}}}e^{-\frac{|x+2Ny|^2}{2}}dx\\
=&\sum_{y\in\mathbb{Z}^n\setminus\{0\}}\int_{[-N,N]^n}|\partial^{\alpha}f(x)|^2\frac{1}{{(2\pi)}^{\frac{n}{2}}}e^{-\frac{|x+2Ny|^2}{2}}dx
+\int_{[-N,N]^n}|\partial^{\alpha}f(x)|^2d\gamma(x)\\
=&\sum_{y\in\mathbb{Z}^n\setminus\{0\}}\int_{\left[-\frac{N}{2\sqrt{n}},\frac{N}{2\sqrt{n}}\right]^n}|\partial^{\alpha}f(x)|^2
\frac{1}{{(2\pi)}^{\frac{n}{2}}}e^{-\frac{|x+2Ny|^2}{2}}dx
+\int_{\left[-\frac{N}{2\sqrt{n}},\frac{N}{2\sqrt{n}}\right]^n}|\partial^{\alpha}f(x)|^2d\gamma(x).
\end{align*}
When $x\in\left[-\frac{N}{2\sqrt{n}},\frac{N}{2\sqrt{n}}\right]^n$ and $y\in\mathbb{Z}^n\setminus\{0\}$, we have
$$|x|\leq \frac{N}{2}\leq \frac{N|y|}{2}.$$
Then
$$e^{-\frac{|x+2Ny|^2}{2}}\leq e^{-\frac{|x|^2}{2}+2N|x| |y|-2N^2|y|^2}\leq e^{-\frac{|x|^2}{2}+2N\frac{N|y|}{2} |y|-2N^2|y|^2} \leq e^{-\frac{|x|^2}{2}-N^2|y|^2}.$$
That is to say
$$
\sum_{y\in\mathbb{Z}^n\setminus\{0\}}\int_{\left[-\frac{N}{2\sqrt{n}},\frac{N}{2\sqrt{n}}\right]^n}|\partial^{\alpha}f(x)|^2
\frac{1}{{(2\pi)}^{\frac{n}{2}}}e^{-\frac{|x+2Ny|^2}{2}}dx
\leq\sum_{y\in\mathbb{Z}^n\setminus\{0\}}e^{-N^2|y|^2}\int_{\left[-\frac{N}{2\sqrt{n}},\frac{N}{2\sqrt{n}}\right]^n}|\partial^{\alpha}f(x)|^2d\gamma(x).
$$
Since
\begin{align*}
\sum_{y\in\mathbb{Z}^n\setminus\{0\}}e^{-N^2|y|^2}&\leq\sum_{j=0}^{n}\sum_{y\in\mathbb{Z}^n,y_j\neq0}e^{-N^2|y|^2}\\
&=n\sum_{y_1=1}^{\infty}\sum_{y_2=0}^{\infty}\dots\sum_{y_n=0}^{\infty}e^{-N^2|y|^2}\\
&=n(\sum_{y_1=1}^{\infty}e^{-N^2|y_1|^2})(\sum_{y_2=0}^{\infty}e^{-N^2|y_2|^2})\dots(\sum_{y_n=0}^{\infty}e^{-N^2|y_n|^2})\\
&\leq\frac{n e^{-N^2}}{(1-e^{-N^2})^n},
\end{align*}
which implies that $$\int_{\mathbb{R}^n}|\partial^{\alpha}f(x)|^2d\gamma(x)\leq
\left(\frac{n e^{-N^2}}{(1-e^{-N^2})^n}+1\right)\int_{\left[-\frac{N}{2\sqrt{n}},\frac{N}{2\sqrt{n}}\right]^n}|\partial^{\alpha}f(x)|^2d\gamma(x)<\infty.$$

On the other hand, since $C_0^{\infty}(\mathbb{R}^n)$ is dense in $W^{2,m}(\gamma)$, we only need
approximate any $g\in C_0^{\infty}(\mathbb{R}^n)$. For any $\epsilon>0$, there is an positive integer $N$ such that
$$
g(x)=0, \text{ when } x\in \mathbb{R}^n\setminus \left[-\frac{N}{2\sqrt{n}},\frac{N}{2\sqrt{n}}\right]^n
$$
and
$$\sum_{y\in\mathbb{Z}^n\setminus\{0\}}e^{-N^2|y|^2}\int_{\mathbb{R}^n}|\partial^{\alpha}g(x)|^2d\gamma(x)\leq \epsilon^2,$$
for any $\alpha\in \mathbb{N}^n$ with $|\alpha|\leq m$.
Let
$$f(x)=\sum_{y\in \mathbb{Z}^n}g(x+2Ny).$$
Then, we know that $f\in C_{p}^{\infty}(\mathbb{R}^n)$ and
$$f(x)=g(x),\text{ when }x\in  \left[-\frac{N}{2\sqrt{n}},\frac{N}{2\sqrt{n}}\right]^n.$$
Then
\begin{align*}
\|g-f\|_{W^{2,m}(\gamma)}
&=\sum_{|\alpha|\leq m}\left[\int_{\mathbb{R}^n}|\partial^{\alpha}g(x)-\partial^{\alpha}f(x)|^2d\gamma(x)\right]^{1/2}
=\sum_{|\alpha|\leq m}\left[\int_{\mathbb{R}^n\setminus\left[
-N,N\right]^n}|\partial^{\alpha}f(x)|^2d\gamma(x)\right]^{1/2}.\\
\end{align*}
By the argument above, we know that
\begin{align*}
&\int_{\mathbb{R}^n\setminus[-N,N]^n}|\partial^{\alpha}f(x)|^2d\gamma(x)\\
\leq&\sum_{y\in\mathbb{Z}^n\setminus\{0\}}e^{-N^2|y|^2}\int_{\left[-\frac{N}{2\sqrt{n}},\frac{N}{2\sqrt{n}}\right]^n}|\partial^{\alpha}f(x)|^2d\gamma(x)\\
=&\sum_{y\in\mathbb{Z}^n\setminus\{0\}}e^{-N^2|y|^2}
\int_{[-\frac{N}{2\sqrt{n}},\frac{N}{2\sqrt{n}}]^n}|\partial^{\alpha}g(x)|^2d\gamma(x)\\
\leq&\sum_{y\in\mathbb{Z}^n\setminus\{0\}}e^{-N^2|y|^2}
\left[\int_{\mathbb{R}^n}|\partial^{\alpha}g(x)|^2d\gamma(x)\right]\\
\leq&\epsilon^2.
\end{align*}
Then, we have
$$\|g-f\|_{W^{2,m}(\gamma)}\leq c_{m}\epsilon,$$
where $c_{m}=\operatorname{card}\{\alpha: |\alpha|\leq m\}$. We have completed the proof.
\end{proof}

We can now give a characterization of the boundedness of $S_{\varphi}$ on $F^{2,m}$.  This is the analogue of the result in \cite{wick2019} obtained for the Fock space $F^2$.

\begin{theorem}\label{theorem 3.7}
Let $m$ be a positive integer, $S_{\varphi}$ is bounded on $F^{2,m}$ if and only if
$$
S_{\varphi}=C_{i}GM_uG^{-1}C_{-i},
$$
where $u$ is a multiplier on $W^{2,m}(\gamma)$. In this case,
$$
\varphi(z)=\int_{\mathbb{R}^{n}} u(2x) e^{-2\left(x-\frac{i}{2} z\right) \cdot\left(x-\frac{i}{2} z\right)} d x.
$$
\end{theorem}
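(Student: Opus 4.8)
The plan is to prove the two implications separately, with the essential work in the forward direction. For sufficiency, suppose $S_{\varphi}=C_{i}GM_{u}G^{-1}C_{-i}$ for some multiplier $u$ on $W^{2,m}(\gamma)$. Since $C_{i}$ and $C_{-i}$ are isometries on $F^{2,m}$, the Gauss--Bargmann transform $G$ is an isometry of $W^{2,m}(\gamma)$ onto $F^{2,m}$, and $M_{u}$ is bounded on $W^{2,m}(\gamma)$, the operator $S_{\varphi}$ is a composition of bounded maps and is therefore bounded on $F^{2,m}$.

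For necessity, assume $S_{\varphi}$ is bounded and set $T=G^{-1}C_{-i}S_{\varphi}C_{i}G$, a bounded operator on $W^{2,m}(\gamma)$. By Lemma~\ref{lemma 3.2}, $T$ commutes with $M_{e^{-ia\cdot x}}$ for every $a\in\mathbb{R}^{n}$. I would first promote this to commutation with $M_{f}$ for every $f\in C_{p}^{\infty}(\mathbb{R}^{n})$: by Lemma~\ref{lemma 3.4} there exist $f_{k}\in\operatorname{span}\{e^{ia\cdot x}:a\in\mathbb{R}^{n}\}$ with $\|M_{f_{k}}-M_{f}\|_{MW^{2,m}(\gamma)}\to0$, and since each $[T,M_{f_{k}}]=0$ while $T$ is bounded, the commutator $[T,M_{f}]$ is the operator-norm limit of $[T,M_{f_{k}}]$, hence $0$.

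Next I would recover the symbol. The constant function $1=h_{0}$ lies in $W^{2,m}(\gamma)$, so I may define $u:=T1\in W^{2,m}(\gamma)\subset L^{1}_{\mathrm{loc}}$. For any $f\in C_{p}^{\infty}(\mathbb{R}^{n})$ we have $M_{f}1=f$, so
$$Tf=T(M_{f}1)=M_{f}(T1)=fu=M_{u}f.$$
Thus $T$ and $M_{u}$ agree on $C_{p}^{\infty}(\mathbb{R}^{n})$, which is dense in $W^{2,m}(\gamma)$ by Lemma~\ref{lemma 3.5}; since $T$ is bounded, $M_{u}$ extends to $T$, so $u\in MW^{2,m}(\gamma)$ and $T=M_{u}$. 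As $C_{i}C_{-i}=I$, inverting the definition of $T$ yields $S_{\varphi}=C_{i}GM_{u}G^{-1}C_{-i}$. To extract $\varphi$, I would compute the kernel of the right-hand side by composing the explicit integral formulas for $G^{-1}$, $M_{u}$, $G$, and the rotations $C_{\pm i}$: the substitution $w=-i\zeta$ turns $G^{-1}C_{-i}$ into integration over $\mathbb{C}^{n}$ against $e^{-ix\cdot\overline{w}+\overline{w}^{2}/2}$, and after inserting $u(x)$, applying $G$ and $C_{i}$, and interchanging the order of integration one arrives at the kernel $e^{z\cdot\overline{w}}\varphi(z-\overline{w})$ with $\varphi(\xi)=e^{\xi^{2}/2}\int_{\mathbb{R}^{n}}u(x)e^{ix\cdot\xi}\,d\gamma(x)$. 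Completing the square in the exponent and rescaling $x\mapsto2x$ then rewrites $\varphi$ in the Gaussian form stated in the theorem.

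The step I expect to be the main obstacle is the commutant argument: establishing that a bounded operator on $W^{2,m}(\gamma)$ commuting with all the exponential multipliers must itself be a multiplication operator, and that the candidate symbol $u=T1$ is a \emph{genuine} multiplier rather than merely a locally integrable function. This relies crucially on having the approximation in Lemma~\ref{lemma 3.4} in the multiplier norm (pointwise convergence would not let us pass the commutator to the limit) together with the density of $C_{p}^{\infty}(\mathbb{R}^{n})$ from Lemma~\ref{lemma 3.5}; the final kernel computation for $\varphi$ is routine but requires care with the Gaussian normalizing constants.
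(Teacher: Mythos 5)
Your proof of the equivalence itself is essentially the paper's: the sufficiency is the same one-line observation, and your necessity argument (commutation with $M_{e^{-ia\cdot x}}$ from Lemma~\ref{lemma 3.2}, upgraded to all of $C_{p}^{\infty}(\mathbb{R}^n)$ via the multiplier-norm approximation of Lemma~\ref{lemma 3.4}, then $u:=T1$, $Tf=M_fT1=uf$, and density from Lemma~\ref{lemma 3.5}) is exactly the paper's, including your correct observation that the approximation must hold in multiplier norm for the commutators to pass to the limit. Where you genuinely differ is the derivation of the formula for $\varphi$. The paper does not compute kernels: it first invokes Theorem~\ref{theorem 3.6} to conclude $u\in ML^{2}(\gamma)=L^{\infty}$, hence $S_{\varphi}$ is bounded on $F^{2}$, and then quotes the $F^{2}$ results of the cited work (the factorization $S_{\varphi}=B\mathcal{F}^{-1}M_{v}\mathcal{F}B^{-1}$, the integral formula for $\varphi$ in terms of $v$, and $C_{i}=B\mathcal{F}^{-1}B^{-1}$), finishing with Proposition~\ref{proposition 2.2} to identify $v=C_{\frac{1}{2}}^{-1}u=u(2\cdot)$. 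Your route, composing the explicit integral formulas, is self-contained and avoids both Theorem~\ref{theorem 3.6} and the external results; moreover it can be made Fubini-free by testing on reproducing kernels: the reproducing property gives $G^{-1}C_{-i}K_{a}(x)=e^{-ix\cdot\overline{a}+\overline{a}^{2}/2}$, hence $C_{i}GM_{u}G^{-1}C_{-i}K_{a}(z)=e^{(z^{2}+\overline{a}^{2})/2}\int_{\mathbb{R}^n}u(x)e^{ix\cdot(z-\overline{a})}\,d\gamma(x)$, and comparing with $S_{\varphi}K_{a}(z)=e^{z\cdot\overline{a}}\varphi(z-\overline{a})$ yields exactly your $\varphi(\xi)=e^{\xi^{2}/2}\int_{\mathbb{R}^n}u(x)e^{ix\cdot\xi}\,d\gamma(x)$. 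One point worth flagging: after the substitution $x\mapsto 2x$ your formula becomes $\varphi(z)=(2/\pi)^{n/2}\int_{\mathbb{R}^n}u(2x)e^{-2(x-\frac{i}{2}z)\cdot(x-\frac{i}{2}z)}\,dx$, which differs from the theorem's stated formula by the constant $(2/\pi)^{n/2}$, and yours is the self-consistent normalization: $u\equiv 1$ forces $S_{\varphi}=C_{i}GG^{-1}C_{-i}=I$, hence $\varphi\equiv 1$, whereas the formula as printed gives $\varphi\equiv(\pi/2)^{n/2}$. So your computation in fact exposes a harmless normalization slip in the paper (an artifact of splicing together the quoted $F^{2}$ results without tracking constants); this is immaterial for the boundedness characterization, since $L^{\infty}$ is scale-invariant, but it matters for the exact correspondence between $u$ and $\varphi$.
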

\begin{proof}Recall that $G$ and $C_{i}$ are isometries.
If $S_{\varphi}=C_{i}GM_uG^{-1}C_{-i}$, where $u$ is a multiplier, then $S_{\varphi}$ is bounded.

On the other hand, suppose that $S_{\varphi}$ is bounded. By Lemma \ref{lemma 3.2} and Lemma \ref{lemma 3.4},
we know that for any $h\in C_{p}^{\infty}(\mathbb{R}^n)$,  $G^{-1}C_{-i}S_{\varphi}C_{i}G$ commutes with $M_h$. Let
$$u=G^{-1}C_{-i}S_{\varphi}C_{i}G1.$$
Then
$$G^{-1}C_{-i}S_{\varphi}C_{i}Gh=G^{-1}C_{-i}S_{\varphi}C_{i}GM_h 1=M_h u=M_{u}h.$$
Since $C_{p}^{\infty}(\mathbb{R}^n)$ is a dense subset of $W^{2,m}(\gamma)$, by Lemma \ref{lemma 3.5}, we know that
$$G^{-1}C_{-i}S_{\varphi}C_{i}G=M_u.$$
That is to say
$S_{\varphi}=C_{i}GM_uG^{-1}C_{-i}$,
where $u$ is a multiplier on $W^{2,m}(\gamma)$.

Next, we prove the second part. By Theorem \ref{theorem 3.6}, $u$ is in $M{L^2(\gamma)}=L^\infty$,
that is to say that
$S_{\varphi}=C_{i}GM_uG^{-1}C_{-i}$ is bounded on $F^2$. Then, by \cite[Proposition 3.6 and Theorem 1.1]{wick2019}, we have
$$
S_{\varphi}=B\mathcal{F}^{-1}M_{v}\mathcal{F}B^{-1}\text{ and }\varphi(z)=\int_{\mathbb{R}^{n}} v(x) e^{-2\left(x-\frac{i}{2} z\right) \cdot\left(x-\frac{i}{2} z\right)} d x,
$$
where $\mathcal{F}$ is the Fourier transform and $M_{v}$ is a multiplication operator with $v\in L^{\infty}(\mathbb{R}^n)$.
On the Fock space, by \cite[Lemma 2.3]{wick2019}, we have $C_{i}= B\mathcal{F}^{-1}B^{-1}$. By Proposition \ref{proposition 2.2}, we have
\begin{align*}
S_{\varphi}&=C_{i}GM_uG^{-1}C_{-i}\\
&=B\mathcal{F}^{-1}B^{-1}GM_uG^{-1}B\mathcal{F}B^{-1}\\
&=B\mathcal{F}^{-1}C^{-1}_{\frac{1}{2}}M^{-1}G^{-1}GM_uG^{-1}GMC_{\frac{1}{2}}\mathcal{F}B^{-1}\\
&=B\mathcal{F}^{-1}C^{-1}_{\frac{1}{2}}M_uC_{\frac{1}{2}}\mathcal{F}B^{-1}\\
&=B\mathcal{F}^{-1}M_{C^{-1}_{\frac{1}{2}}u}\mathcal{F}B^{-1}.
\end{align*}
By the argument above we obtain $v(x)=C^{-1}_{\frac{1}{2}}u=u(2x)$.
\end{proof}

\subsection{Other Operator Theoretic Properties}
According to the theorems above, we can obtain some properties of $S_{\varphi}$ on the Fock-Sobolev space.
\begin{corollary}For any $m>0$, if $S_{\varphi}$ is bounded on $F^{2,m}$, we have following conclusions.
\begin{enumerate}
\item The set of operators $\{S_{\varphi}: S_{\varphi} \text{ is bounded}\}$ is a commutative algebra.
\item $S_{\varphi}$ is compact on $F^{2,m}$ if and only if $S_{\varphi}=0$.
\item $S_{\varphi}$ is invertible on $F^{2,m}$ if and only if $\frac{1}{u}$ is essentially bounded, where $u$ is the multiplier on $W_{2,m}(\gamma)$corresponding to $S_{\varphi}$ in Theorem \ref{theorem 3.7}.
\end{enumerate}
\end{corollary}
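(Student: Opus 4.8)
The plan is to use Theorem \ref{theorem 3.7} to transport every bounded $S_{\varphi}$ to a multiplication operator on the Gauss-Sobolev space. Since $C_i$, $C_{-i}$ and $G$ are isometric isomorphisms with $C_{-i}C_i = C_iC_{-i} = I$ and $G^{-1}G = I$, the assignment $S_{\varphi}\mapsto u$, where $S_{\varphi}=C_iGM_uG^{-1}C_{-i}$, is by Theorem \ref{theorem 3.7} a bijection onto $MW^{2,m}(\gamma)$ that conjugates $S_{\varphi}$ to $M_u$. All three assertions will be read off from the corresponding statements for $M_u$ on $W^{2,m}(\gamma)$.

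For (1), I would first check that this correspondence is an algebra homomorphism. Composing $C_iGM_{u_1}G^{-1}C_{-i}$ with $C_iGM_{u_2}G^{-1}C_{-i}$ collapses the middle factor $G^{-1}C_{-i}C_iG = I$ and leaves $C_iGM_{u_1}M_{u_2}G^{-1}C_{-i}$. Since $M_{u_1}M_{u_2}=M_{u_1u_2}=M_{u_2u_1}$, the product $u_1u_2$ is again a bounded multiplier (a product of bounded operators is bounded) and the composition is symmetric in the two symbols. Hence the set of bounded $S_{\varphi}$ is closed under addition and composition and is commutative, i.e. a commutative algebra isomorphic to $MW^{2,m}(\gamma)$.

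For (2), and for the lower bound needed in (3), the key device is a high-frequency modulation. By Lemma \ref{lemma 3.2}, $M_u$ commutes with each $M_{e^{ia\cdot x}}$. Fix $h\in W^{2,m}(\gamma)$ and set $g_a=e^{ia\cdot x}h$, so that $M_ug_a=e^{ia\cdot x}(uh)$. Expanding $\partial^{\alpha}(e^{ia\cdot x}w)$ by Leibniz, the top-order term $(ia)^{\alpha}e^{ia\cdot x}w$ dominates as $|a|\to\infty$, while the lower-order terms are oscillatory integrals against $L^1(dx)$ densities; the Riemann--Lebesgue lemma then shows that the normalized family $g_a/\|g_a\|_{W^{2,m}(\gamma)}$ tends weakly to $0$ with unit norm, and the degree-$m$ homogeneity of the leading term gives $\lim\|e^{ia\cdot x}(uh)\|_{W^{2,m}(\gamma)}/\|e^{ia\cdot x}h\|_{W^{2,m}(\gamma)}=\|uh\|_{L^2(\gamma)}/\|h\|_{L^2(\gamma)}$ as $a\to\infty$ in a fixed direction. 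If $S_{\varphi}$, equivalently $M_u$, is compact, it carries this weakly null unit sequence to a norm-null sequence, which forces $\|uh\|_{L^2(\gamma)}=0$; taking $h=1$ yields $u=0$ and hence $S_{\varphi}=0$, the converse being trivial. This modulation analysis, isolating the $L^2(\gamma)$-size of the symbol from the top-order derivative, is the main obstacle.

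For (3), invertibility of $S_{\varphi}$ is equivalent to invertibility of $M_u$ on $W^{2,m}(\gamma)$. For necessity, running the same modulation argument on a lower bound $\|M_ug\|_{W^{2,m}(\gamma)}\gtrsim\|g\|_{W^{2,m}(\gamma)}$ gives $\|uh\|_{L^2(\gamma)}\gtrsim\|h\|_{L^2(\gamma)}$ for all $h$ in the dense set $W^{2,m}(\gamma)\subset L^2(\gamma)$, so $M_u$ is bounded below on $L^2(\gamma)$, which forces $|u|\ge\delta$ a.e. and hence $1/u\in L^{\infty}$. For sufficiency, assuming $1/u\in L^{\infty}=ML^2(\gamma)$ I would verify the criterion of Theorem \ref{theorem 3.6} for $1/u$. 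For $|\alpha|=m$ the Fa\`a di Bruno formula writes $\partial^{\alpha}(1/u)$ as a finite sum of terms $u^{-(k+1)}\prod_{j}\partial^{\gamma_j}u$ with $\sum_j|\gamma_j|=m$ and each $|\gamma_j|\ge1$. Since $u\in MW^{2,m}(\gamma)$, Proposition \ref{proposition 4.6} gives $u\in ML^2(\gamma)$ and interpolation (Theorem \ref{Theorem 4.2}) gives $u\in MW^{2,s}(\gamma)$ for every $s\le m$, so Lemma \ref{Lemma 4.3} applies at each admissible level and yields $\partial^{\gamma_j}u\in M(W^{2,s}(\gamma)\rightarrow W^{2,s-|\gamma_j|}(\gamma))$ whenever $|\gamma_j|\le s\le m$. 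Applying these factors successively to $g\in W^{2,m}(\gamma)$ lowers the order by $|\gamma_j|$ at each step and lands $\prod_j\partial^{\gamma_j}u\cdot g$ in $L^2(\gamma)$ with norm $\lesssim\|g\|_{W^{2,m}(\gamma)}$; multiplying by $u^{-(k+1)}\in L^{\infty}$ preserves membership in $M(W^{2,m}(\gamma)\rightarrow L^2(\gamma))$. Thus $\partial^{\alpha}(1/u)\in M(W^{2,m}(\gamma)\rightarrow L^2(\gamma))$, and Theorem \ref{theorem 3.6} gives $1/u\in MW^{2,m}(\gamma)$, so $M_{1/u}$ is a two-sided inverse of $M_u$ and $S_{\varphi}$ is invertible.
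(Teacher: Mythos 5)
Your proof is correct, and it departs from the paper's own proof in two places worth noting. For (1) and for the sufficiency half of (3) you follow essentially the paper's route: the paper also reads (1) off the correspondence $S_{\varphi}\mapsto M_u$ of Theorem \ref{theorem 3.7}, and also proves $1/u\in MW^{2,m}(\gamma)$ by expanding $\partial^{\alpha}(1/u)$ into terms of the form $u^{-(m+1)}\partial^{\beta^1}u\cdots\partial^{\beta^m}u$ and invoking Lemma \ref{Lemma 4.3} and Theorem \ref{theorem 3.6}; your explicit interpolation step (Theorem \ref{Theorem 4.2}), which lets Lemma \ref{Lemma 4.3} be applied at each intermediate order $s<m$ as the derivative factors are applied successively, actually fills in a detail the paper leaves implicit. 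The genuine differences are these. For (2), the paper does not argue directly: it cites the construction on p.~270 of Maz'ya--Shaposhnikova, producing a weakly null sequence $f_n$ with $\|f_n\|_{W^{2,m}(\gamma)}=\|\eta\|_{L^2(\gamma)}+O(n^{-1})$ and $\|uf_n\|_{W^{2,m}(\gamma)}=\|u\eta\|_{L^2(\gamma)}+O(n^{-1})$, and merely asserts that this Euclidean construction survives the passage to Gaussian measure. Your modulation family $g_a=e^{ia\cdot x}h$, with the Leibniz expansion and Riemann--Lebesgue analysis, is a self-contained proof of exactly this mechanism (oscillation concentrates the $W^{2,m}$-norm on the top-order term and extracts $\|uh\|_{L^2(\gamma)}$) in the Gaussian setting; you thus replace a citation plus an unproved transfer claim by an argument, at the cost of some asymptotic bookkeeping. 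For the necessity half of (3), the paper is softer: invertibility of $M_u$ on $W^{2,m}(\gamma)$ makes $M_{1/u}$ everywhere defined and closed, hence bounded by the closed graph theorem, and then Theorem \ref{theorem 3.6} yields $1/u\in ML^{2}(\gamma)=L^{\infty}$. You instead re-run the modulation argument to push the lower bound $\|M_u g\|_{W^{2,m}(\gamma)}\gtrsim\|g\|_{W^{2,m}(\gamma)}$ down to $\|uh\|_{L^{2}(\gamma)}\gtrsim\|h\|_{L^{2}(\gamma)}$ on the dense subspace $W^{2,m}(\gamma)\subset L^{2}(\gamma)$, and conclude $|u|\geq\delta$ a.e.; this is legitimate, since $u\in L^{\infty}$ (Proposition \ref{proposition 4.6}) makes $M_u$ bounded on $L^{2}(\gamma)$, so a lower bound on a dense subspace extends to all of $L^{2}(\gamma)$. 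Both routes are valid: the paper's closed-graph argument is shorter and avoids asymptotics, while yours is quantitative, self-contained, and reuses the machinery you already built for (2).
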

\begin{proof}(1) follows form Theorem \ref{theorem 3.7} and the fact that the set of multiplication operators is a commutative algebra.

To prove (2), we need a fact. For any smooth function $\eta$ with compact support, there is a sequence of functions $f_n$ such that
$$f_n\rightarrow0 \text{ weakly and }\|f_{n}\|_{W^{2,m}(\gamma)}=\|\eta\|_{L^2(\gamma)}+O(n^{-1}),$$
 moreover, if $u\in MW^{2,m}(\gamma)$, then
$$ \|uf_{n}\|_{W^{2,m}(\gamma)}=\|u\eta\|_{L^2(\gamma)}+O(n^{-1}).$$
For the construction see \cite[pg. 270]{Mazya}. Although the construction is made for the Sobolev space, the proof is also valid for Gauss-Sobolev space.
If $u \in MW^{2,m}(\gamma)$ is compact, then
$$\lim_{n\rightarrow \infty} \|uf_{n}\|_{W^{2,m}(\gamma)}=0.$$
That is to say $\|u\eta\|_{L^2(\gamma)}=0$, which implies that $u=0$. By Theorem \ref{theorem 3.7}, we get the conclusion.

Next we prove (3). If $\frac{1}{u}$ is essentially bounded, we claim that $\frac{1}{u}$ is also a multiplier on $W^{2,m}(\gamma)$.
For any $\alpha$ with $|\alpha|=m$, we have
$$
\partial^{\alpha}\frac{1}{u}=\sum_{\beta^1+\cdots +\beta^m\leq\alpha }c_{\beta^1,\cdots,\beta^m,\alpha}\frac{\partial^{\beta^1}u\cdots\partial^{\beta^{m}}u}{u^{m+1}},
$$
where $\{c_{\beta^1,\cdots,\beta^m,\alpha}\}$ are some constants.
By Lemma \ref{Lemma 4.3}, we have $\partial^{\beta^1}u\cdots\partial^{\beta^{m}}u$ is a multiplier from $W^{2,m}(\gamma)$ to $L^2(\gamma)$ for any $\beta^1,\cdots,\beta^m$ with $\beta^1+\cdots +\beta^m\leq\alpha$ , which implies that $\partial^{\alpha}\frac{1}{u}$ is a multiplier from $W^{2,m}(\gamma)$ to $L^2(\gamma)$. By Theorem \ref{theorem 3.6}, we obtain that $\frac{1}{u}$ is a multiplier on $W^{2,m}(\gamma)$. Then $M_{\frac{1}{u}}$ is the inverse operator of $M_{u}$, which implies that $S_{\varphi}$ is invertible.

On the other hand, if $S_{\varphi}$ is invertible on $F^{2,m}$, then $M_{u}$ is invertible on $W^{2,m}(\gamma)$. For any $g\in W^{2,m}(\gamma)$, there is $f\in W^{2,m}(\gamma)$ such that $g=uf$. Then $\frac{1}{u}g=f\in W^{2,m}(\gamma)$. Since $M_{\frac{1}{u}}$ is a closed operator, we have $M_{\frac{1}{u}}$ is bounded on $W^{2,m}(\gamma)$. By Theorem \ref{theorem 3.6}, we know that $M_{\frac{1}{u}}$ is bounded on $L^{2}(\gamma)$. That is to say $\frac{1}{u}$ is essentially bounded.
\end{proof}
%-----------

\bibliographystyle{amsplain}

\begin{thebibliography}{99}
\bibitem{Bogachev}
Vladimir l. Bogachev, Gaussian Measure, American Mathematical Society.

\bibitem{wick2019}
Guangfu Cao, Ji Li, Minxing Shen, Brett D. Wick and Lixin Yan, A Boundedness Criterion for Singular Integral of Convolution type of the Fock Space. arXiv:1907.00574v3.

\bibitem{coburn1985}
C. A. Berger and L. A. Coburn, Toeplitz Operators and Quantum Mechanics, {\sl J. Funct. Anal. } {\bf48} (1985), 273-299.

\bibitem{coburn1994}
C. A. Berger and L. A. Coburn, Heat flow and Berezin-Toeplitz estimates, {\sl American Journal of Mathematics. }{\bf116} (1994), 563-590.


\bibitem{Liu}
Liguang Liu, Jie Xiao, Dachun Yang and Wen Yuan, Gaussian Capasity Analysis, springer, 2018.

\bibitem{LP}
Iris A. L$\acute{o}$pez P, Sobolev Spaces and Potential Spaces Associated to Hermite Polynomials Expansions, {\sl Extracta Mathematicae} {\bf33} (2018),  229--249.
\bibitem{Mazya}
Vladimir G. Maz'ya and Tatyana O. Shaposhnikova, Theory of Sobolev Multipliers, Springer.
\bibitem{mitkovski}
Mishko Mitkovski and Brett D. Wick, A reproducing kernel thesis for operators on Bergman-Type spaces, {\sl J. Funct. Anal.} {\bf267} (2014),  2028--2055.

\bibitem{Urbina}
Wilfredo Urbina-Romero, Gaussian Harminic Analysis, Springer, 2018.

\bibitem{stein1971}
E. Stein and G. Weiss, Introduction to Fourier analysis on Euclidean spaces, Princeton University Press, 1971.

\bibitem{Poornima}
S. Poornima, Multiplier of Sobolev Spaces, {\sl J. Funct. Anal.} {\bf45} (1982), 1--28.

\bibitem{zhu1990}
K. Zhu, Operator Theory Function Spaces, Dekker, New York, 1990.

\bibitem{zhu}
K. Zhu, Analysis on Fock Spaces Springer, New York, 2012.

\bibitem{zhu2012}
K. Zhu, Fock-Sobolev space and their carleson measure, {\sl J. Funct. Anal.} {\bf263} (2012), 2483--2506.

\bibitem{zhu2015}
K. Zhu, Singular integral operators on the Fock space, {\sl Integral Equations Operator Theory.} {\bf81} (2015), 451--454.



\end{thebibliography}

\end{document}